\newcommand\Z{{\mathbb Z}}
\newcommand\G{{\Gamma}}
\newcommand\g{{\gamma}}
\newcommand\R{{\mathbb R}}
\newcommand\Q{{\mathbb Q}}
\newcommand\hyp{{\mathbb H}}
\newcommand\glnr{{GL_n(\R)}}
\newcommand\glnq{{GL_n(\Q)}}
\newcommand\glnz{{GL_n(\Z)}}
\newcommand\slnz{{SL_n(\Z)}}
\newcommand\semi{\rtimes}
\newcommand\U{{\mathcal U}}
\newcommand\F{{\mathcal F}}
\newcommand\QI{{\bar{QI}}}
\newcommand\inv{{^{-1}}}
\newcommand\by{{\rtimes}}
\DeclareMathOperator\h{hol}
\newtheorem{theorem}{Theorem}[section]
\newtheorem{proposition}[theorem]{Proposition}
\newtheorem{lemma}[theorem]{Lemma}
\newtheorem{claim}[theorem]{Claim}
\newtheorem{lemmadef}[theorem]{Lemma/Definition}
\newtheorem{corollary}[theorem]{Corollary}
\newtheorem{defn}[theorem]{Definition}
\newtheorem{question}[theorem]{Question}
\newtheorem{conjecture}[theorem]{Conjecture}
\newtheorem{remark}[theorem]{Remark}
\begin{document}
\title{Coarse Bundles}
\author{ Kevin Whyte}
\thanks{ DMS-0204576 }

\begin{abstract}  We develop a coarse notion of bundle and use it to understand the coarse geometry of group extensions and, more generally, groups acting on proper metric spaces.   The results are particularly sharp for groups acting on (locally finite) trees with Abelian stabilizers, which we are able to classify completely.    
\end{abstract}

\maketitle

\section*{Introduction}

Our original motivation for the discussion that follows is the quasi-isometric geometry of finitely generated groups.  Specifically,  groups with cocompact isometric actions on spaces which are, at least in the coarse sense, locally compact.   Particularly groups acting on finite valence trees ( the groups are then "homogeneous graphs of groups" in  the sense of \cite{MSW1}, see section \ref{sectreebase}) and groups which are extensions of groups with nice pieces - $\Z^n \rtimes \slnz$ or $\hbox{Aut}(\Sigma_g)$ for $\Sigma_g$ a higher genus surface (sections \ref{section:proper} and \ref{sec:pattern}).

Our point of view on these topics is fundamentally dynamical.   Consider a group $\G = H \rtimes G$, for some map $G \to \hbox{Aut}(H)$.   The geometry of $\G$ encodes the "large scale dynamics" of the $G$ action on $H$.   The simplest example is $\G_{\phi} = H \rtimes _{\phi} \Z$, where $\phi$ is an automorphism of $H$.   We will consider $\phi_1$ and $\phi_2$ to have the same large scale dynamics on $H$ if, for some $(K,C)$, there is a sequence $\{f_n\}_{n \in \Z}$ of $(K,C)$-quasi-isometries of $H$ and an $R>0$ so that for all $n$:  $$d(f_n \circ \phi_1, \phi_2 \circ f_{n+1}) \leq R$$

This says that the iterates of  $\phi_1$ shadow those of $\phi_2$ to within bounded distortion.  This sense or dynamics measures only the ways in which the metric is distorted, and, in particular, considers any two isometric actions to be equivalent.    This is a very natural subject in its own right; historically, the first examples are a closely related notion of coarse dynamics for pseudo-Anosov actions on surfaces is studied in \cite{Schwartz} and the classification of linear $\Z$ actions on $\R^n$ (\cite{FMabc}).    However, the subject is closely related to many earlier results, for example, the results of \cite{benerdete} and \cite{Witte} on the rigidity of affine foliations.  The relationship of this to the quasi-isometric geometry of groups can be seen here as the observation that the dynamics of $\phi_1$ and $\phi_2$ are coarsely equivalent if and only if there is a quasi-isometry between $H \rtimes _{\phi_1} \Z$ and $H \rtimes _{\phi_2} \Z$ which commutes, up to bounded distance, with the projections to $\Z$.  Part of the power of these methods comes from the fact that, in many circumstances, any quasi-isometry respects the dynamics (see section \ref{sec:pattern}).

We describe such groups as coarse versions of bundles.   We give a general cohomological classification which parallels the classification of bundles via group cohomology.    The classification of extensions with kernel $K$ is quite different depending on whether $K$ has center - if it does not, the map to $Out(K)$ is essentially a complete invariant, whereas in the Abelian case the classification is via $H^2$ of the quotient.    We get a decent analogue of the former (which we call the tame case) but in the latter case we can only get results under some fairly strong assumptions.   

Some of the major results of the paper are as follows:

\begin{theorem} \label{thm:graphofzn} Let $G$ be the fundamental group of a finite graph of groups with all vertex and edge groups commensurable to $\Z^n$, and let $T$ be the Bass-Serre tree.  Assume $T$ has infinitely many ends.  Let $h: T \to GL_n(\Q)$ the natural modular homorphism.   We say two such groups have Hausdorff equivalent holonomy if the images of $h$ in $GL_n(\R)$ can be conjugated to be at finite Hausdorff distance.     

\begin{enumerate}
\item If two such groups are quasi-isometric then they have Hausdorff equivalent holonomy.
\item Groups within a given equivalence class of holonomy divide into three quasi-isometry invariant subclasses:
\begin{itemize}
\item Those which are virtually of the form $\Z^n \by F$ for $F$ a free subgroup of $GL_n(\Z)$.  
\item Those which are virtually ascending HNN extensions of some endomorphism $E: \Z^n \to \Z^n$.    These are classified up to quasi-isometry in \cite{FMabc}.
\item All groups not of the first two forms, all of which are in a single quasi-isometry type.  
\end{itemize}
\end{enumerate}
\end{theorem}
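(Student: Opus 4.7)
The plan is to view $G$ as a coarse bundle over the Bass-Serre tree $T$ with fiber $\Z^n$, in the sense developed earlier in the paper, and to read off the classification from the holonomy of this bundle. Since $G$ acts cocompactly on $T$ with all vertex and edge stabilizers commensurable to $\Z^n$, the orbit map $G \to T$ exhibits $G$ coarsely as a bundle with fiber $\Z^n$. For each oriented edge $e$ of $T$, the two inclusions $G_e \hookrightarrow G_{\iota e}$ and $G_e \hookrightarrow G_{\tau e}$, under coarse identifications of vertex stabilizers with $\Z^n$, yield an element of $GL_n(\Q)$; these assemble into the modular map $h$.

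For part (1), I would apply the tameness/pattern machinery from Sections \ref{section:proper} and \ref{sec:pattern}. Since $T$ has infinitely many ends and the fiber $\Z^n$ has a well-understood quasi-isometry group (every self-QI of $\Z^n$ is close to an affine map), any quasi-isometry between two such groups must coarsely preserve the fibration: it induces a quasi-isometry of the base trees and compares fibers by maps that are close to elements of $GL_n(\R)$. Reading off these comparisons edge by edge shows that the two holonomy images in $GL_n(\R)$ are at finite Hausdorff distance after a single conjugation, which is precisely the stated invariant.

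For part (2), fix a Hausdorff class of holonomy and let $H$ denote the closure of its image in $GL_n(\R)$. If $H$ can be conjugated into $GL_n(\Z)$, the holonomy preserves a common lattice and a standard argument produces a finite-index subgroup of the form $\Z^n \rtimes F$ with $F$ free in $GL_n(\Z)$. If $H$ is virtually cyclic generated by a strict (non-invertible) endomorphism, then $G$ is virtually an ascending HNN extension and \cite{FMabc} supplies the classification. In the remaining case one must show that all groups with this holonomy are mutually quasi-isometric; my strategy is to build a universal coarse-bundle model $X_H$ over a regular bounded-valence tree whose holonomy is dense in $H$, and produce explicit quasi-isometries $G \to X_H$ by using that a holonomy image which is neither integral nor cyclic is rich enough to absorb discrepancies between different base trees.

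The main obstacle I expect is this last step, the ``generic'' third subclass: showing that arbitrary graphs of $\Z^n$'s with Hausdorff-equivalent holonomy of this richer kind are all quasi-isometric. The difficulty splits into (i) constructing the canonical model $X_H$ and verifying it is a coarse bundle in the sense of the paper, and (ii) proving a rigidity/flexibility lemma of the form: any coarse bundle over a bounded-valence infinite-ended tree whose holonomy is conjugate-Hausdorff to a non-cyclic, non-integral subgroup of $GL_n(\Q)$ is quasi-isometric to $X_H$. The coarse bundle framework should reduce (ii) to a statement about extending partial QIs of base trees compatibly with holonomy, and the non-discreteness of $H$ in either the integral or the cyclic sense is precisely what should allow such extensions to be carried out.
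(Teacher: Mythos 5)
Your setup (viewing $G$ as a coarse bundle over $T$ with fiber $\Z^n$ and reading the invariants off the holonomy) matches the paper, and your part (1) is roughly the paper's argument, though note that the fact that quasi-isometries between such groups are automatically fiber preserving is not a consequence of the fiber $\Z^n$ being well understood --- it is the main theorem of \cite{MSW1} on quasi-actions on bounded valence trees, which the paper invokes explicitly; after that the paper reduces the structure group to $\glnr$ using the existence of coarse sections over trees and the structural rigidity lemmas, rather than the pattern machinery.

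The genuine gap is in part (2): your trichotomy is drawn from the algebraic structure of the holonomy \emph{image} $H\subset \glnr$ (conjugate into $\glnz$, versus virtually cyclic non-invertible, versus other), but the theorem explicitly classifies groups \emph{within a single Hausdorff class of holonomy} into three subclasses, so the image alone cannot be the deciding datum. The paper's trichotomy (Lemma \ref{thm:trichotomy}) is dynamical: for each edge of $T$ one asks which complementary halfspaces carry the holonomy, giving the proper / parabolic / folded cases, and these correspond to the three subclasses (proper holonomy forces vertex stabilizers to fix all of $T$, hence $\Z^n\by F$ with $F$ faithful; parabolic holonomy forces an invariant end, hence an ascending HNN extension; folded holonomy is handled by the lifting theorem). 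Your criterion misclassifies concretely: $\Z^n\by_\alpha F$ with $\alpha$ injective and $\Z^n\by_{\alpha'} F$ with $\alpha'$ non-injective but the same image have identical holonomy images yet lie in different quasi-isometry classes (first versus third subclass --- this is exactly the corollary stated after the theorem); likewise $BS(1,6)$ and $BS(2,3)$ have Hausdorff equivalent cyclic holonomy images in $GL_1(\R)$, but the former is an ascending HNN extension (parabolic, solvable) and the latter is in the generic folded class, and they are not quasi-isometric. Your strategy for the third case (a universal model $X_H$ absorbed by richness of $H$) is in the right spirit --- the paper's Theorem \ref{lifting} does something similar via directed path lifting --- but the hypothesis that makes it work is foldedness of the holonomy map on the tree together with coarse local compactness of the structure group, not non-integrality or non-cyclicity of the image.
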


In general the commensurability classification in the third case is complicated.  Also, the structure of subgroups of $GL_n(\R)$ at bounded Hausdorff distance from a given subgroup seems to be quite subtle in general (see section \ref{sec:HausdorffGLnR}) - we are able to give a complete answer only for $n=1$ or $2$.    One case where both of these issues are easier is semi-direct products:

\begin{corollary} For semi-direct products $\Z^n \by_\alpha F$ for $F$ free and non-Abelian the image of $F$ up to Hausdorff equivalence is a quasi-isometry invariant.  Further, there are at most two quasi-isometry classes for a  given (Hausdorff class of) image - those for which $F$ is injective and those of which it is not.
\end{corollary}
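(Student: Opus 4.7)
The plan is to recognize $\Z^n \by_\alpha F$ as an instance of the graph-of-groups setup of Theorem \ref{thm:graphofzn} and then to collapse the three QI subclasses in part (2) into two by showing that the ``ascending HNN'' case cannot occur and that the ``virtually $\Z^n \by F'$'' case corresponds exactly to $\alpha$ being injective. Concretely, $\Z^n \by_\alpha F$ is the fundamental group of a graph of groups consisting of a single vertex of group $\Z^n$ and one loop per generator of $F$, with all edge groups $\Z^n$ and the holonomy of the $i$-th loop given by $\alpha(f_i)$; its Bass-Serre tree is the Cayley tree of $F$, which has infinitely many ends since $F$ is non-Abelian, and the modular homomorphism is precisely $\alpha$. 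The first claim of the corollary is then immediate from Theorem \ref{thm:graphofzn}(1).

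For the second claim, I first rule out the ascending-HNN case. An ascending HNN extension $\Z^n \by_E \Z$ is metabelian, so any group virtually of this form is virtually solvable. But any finite-index $H \leq \Z^n \by_\alpha F$ maps onto a finite-index subgroup $F'$ of $F$ via $H / (H \cap \Z^n) \hookrightarrow F$, and $F'$ is non-Abelian free of rank $\geq 2$. Hence $H$ surjects onto a non-Abelian free group and is not solvable, so $\Z^n \by_\alpha F$ is not virtually solvable and lies outside the ascending-HNN class.

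Next I characterize when it lies in the ``virtually $\Z^n \by F'$'' class. If $\alpha$ is injective, the group itself is of this form with $F' = \alpha(F) \subseteq GL_n(\Z)$. Conversely, take any finite-index $H \leq \Z^n \by_\alpha F$: setting $\Lambda = H \cap \Z^n$ (finite index in $\Z^n$) and $F' = H/\Lambda$ (finite index in $F$), one obtains an extension $1 \to \Lambda \to H \to F' \to 1$ whose induced action is $\alpha|_{F'}$. Since $F'$ is free, $H^2(F', \Lambda) = 0$ and the extension splits as $H \cong \Lambda \by F'$. For $H$ to be of the required form we need $F'$ to act faithfully on $\Lambda$, i.e. $\ker(\alpha) \cap F' = 1$; but $\ker(\alpha)$ is normal in $F$, so $[\ker(\alpha) : \ker(\alpha) \cap F'] \leq [F : F'] < \infty$, and since $F$ is torsion-free this forces $\ker(\alpha) = 1$.

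Thus, for semi-direct products with $F$ non-Abelian free, the three classes of Theorem \ref{thm:graphofzn}(2) collapse to two — $\alpha$ injective and $\alpha$ non-injective — and by the theorem each is a single QI class within a fixed Hausdorff class of image. The main technical hurdle is the characterization in the third paragraph: it uses both the vanishing of $H^2$ of a finite-index subgroup of $F$ (to upgrade $H$ to an actual semi-direct product) and the normality of $\ker(\alpha)$ (to transfer non-injectivity of $\alpha$ to every finite-index subgroup).
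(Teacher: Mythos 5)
Your overall strategy --- realize $\Z^n \by_\alpha F$ as a homogeneous rose of groups with Bass--Serre tree the Cayley tree of $F$ and holonomy $\alpha$, then decide which of the three subclasses of Theorem \ref{thm:graphofzn}(2) it occupies --- is the right one, and your tactics differ pleasantly from the paper's: you exclude the ascending HNN case by virtual (non)solvability rather than by the paper's observation that parabolic holonomy forces a $\G$-fixed end of the tree (impossible for non-Abelian $F$), and you identify the first subclass with injectivity of $\alpha$ by a splitting argument rather than by noting that $\ker\alpha \neq 1$ forces folded holonomy (a nontrivial normal subgroup of $F$ meets every halfspace) while $\ker\alpha = 1$ forces proper holonomy. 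A minor repair: when you say ``for $H$ to be of the required form we need $F'$ to act faithfully on $\Lambda$,'' being of the form $\Z^n \by F''$ with $F'' \leq \glnz$ is a condition on the abstract isomorphism type of $H$, not on your chosen decomposition; this is fixed by observing that $\Lambda$ is the unique maximal normal Abelian subgroup of $H = \Lambda \by F'$ (any normal Abelian subgroup has trivial image in the non-Abelian free group $F'$), so any isomorphism with a faithful semi-direct product must match the two decompositions and transport faithfulness.

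The substantive gap is in your final sentence: ``by the theorem each is a single QI class within a fixed Hausdorff class of image.'' Theorem \ref{thm:graphofzn}(2) asserts this only for the third subclass. For the first subclass it asserts only that it is a quasi-isometry invariant subclass, not that two of its members with Hausdorff equivalent holonomy are quasi-isometric. So as written your argument shows that injective and non-injective $\alpha$ land in different QI classes and that the non-injective ones form a single class per Hausdorff class, but proves nothing about uniqueness of the QI class in the injective case --- which is half of the corollary's second assertion. To close this you need the proper holonomy machinery of section \ref{section:proper}: when $\alpha$ is injective the holonomy map from the Cayley tree of $F$ to $\glnr$ is a proper embedding (its fibers are cosets of $\ker\alpha$, hence singletons, and the image lies in the discrete subgroup $\glnz$, so preimages of bounded sets are finite), and Lemma \ref{holonomy} then makes the image up to translation and bounded Hausdorff distance a \emph{complete} quasi-isometry invariant of the bundle, hence of the group. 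With that ingredient added, your proof is complete and matches what the paper actually establishes in section \ref{section:graphofzn}, where the proper and folded cases are each shown to contain at most one quasi-isometry type.
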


The trichotomy in theorem \ref{thm:graphofzn} is more general than for graphs of $\Z^n$, applying to any bundles over trees with coarsely locally compact structure groups (see theorem \ref{thm:trichotomy}).  

When the base of the bundle is of higher dimension the issues are more complicated.  If one is thinking by analogy with extensions, the one dimension case avoids all the $H^2$ contributions.    We have some results here as well, but not as strong:

\begin{theorem} \label{thm:central} 
Two central extensions $G_1$ and $G_2$ of $Q$ by $\Z^n$ are quasi-isometric over $Q$ iff  the cocycles defining the extensions are in the same $\glnr$ orbit inside  ${H^2}_\infty(Q,\R^n)$. 
\end{theorem}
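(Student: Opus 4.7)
The plan is to work with explicit cocycle representatives of the two extensions. Fix set-theoretic sections $s_i: Q \to G_i$ and let $\sigma_i: Q \times Q \to \Z^n$ be the associated cocycles, $\sigma_i(q_1,q_2) = s_i(q_1)s_i(q_2)s_i(q_1q_2)\inv$, viewed as $\R^n$-valued $2$-cochains. The group $\glnr$ acts on $\R^n$-valued cohomology by change of coefficients. I interpret the statement ``same $\glnr$-orbit in ${H^2}_\infty(Q,\R^n)$'' as: there exist $A \in \glnr$ and a function $\tau : Q \to \R^n$ such that $A\sigma_1 - \sigma_2 - \delta \tau$ is uniformly bounded on $Q \times Q$. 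The two implications are treated separately.

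For the ``if'' direction, identify each $G_i$ as a set with $Q \times \Z^n$ via its section and define $\Phi : G_1 \to G_2$ by $(q,v) \mapsto (q, \lfloor Av + \tau(q) \rfloor)$, where $\lfloor\cdot\rfloor$ is nearest-point projection to $\Z^n$. Because $A$ is a linear bijection of $\R^n$, its restriction to $\Z^n$ is a quasi-isometry, and the additive shift $\tau(q)$ only moves within a fiber. A direct calculation with the cocycle formulas shows that $\Phi$ intertwines multiplication up to bounded error, the error being precisely $A \sigma_1 - \sigma_2 - \delta \tau$, which is bounded by hypothesis. Hence $\Phi$ is a quasi-isometry over $Q$. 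This is the coarse analogue of the classical fact that cohomologous cocycles yield isomorphic central extensions.

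For the ``only if'' direction, take a quasi-isometry $\Phi : G_1 \to G_2$ covering the identity on $Q$ up to bounded error. For each $q \in Q$ the restriction to the fiber is a quasi-isometry $\Phi_q : \Z^n \to \Z^n$, and left-translating to a common basepoint shows that the various $\Phi_q$ all agree up to a uniform bounded amount. The classical rigidity of quasi-isometries of $\Z^n$ then lets us write $\Phi_q(v) = Av + \tau(q) + O(1)$ for a single $A \in \glnr$ and some $\tau : Q \to \R^n$. Unpacking that $\Phi$ is a coarse homomorphism, by comparing $\Phi(s_1(q_1)s_1(q_2))$ with $\Phi(s_1(q_1)) \Phi(s_1(q_2))$ in $G_2$, produces the required identity $A\sigma_1 - \sigma_2 = \delta \tau + \text{bounded}$, placing the cocycles in the same $\glnr$-orbit in $H^2_\infty(Q, \R^n)$.

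The main obstacle is the rigidity step used in the last paragraph: extracting a single $A \in \glnr$ that controls the fiber-wise behavior of $\Phi$ simultaneously for every $q \in Q$. This should follow from combining the classical bounded-distance-to-affine rigidity for quasi-isometries of $\Z^n$ with a continuity argument --- likely falling within the coarse bundle framework developed earlier in the paper --- showing that the linear parts of the $\Phi_q$ vary only by a bounded amount in $q$, so the variation can be absorbed into $\tau$. Once this is in hand, the verification that $\tau$ gives the desired coboundary identity in $H^2_\infty$ is a routine cocycle computation.
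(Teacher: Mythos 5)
The central difficulty of this theorem is the ``only if'' direction, and your argument for it rests on a claim that is false: there is no ``classical rigidity of quasi-isometries of $\Z^n$'' that lets you write $\Phi_q(v) = Av + \tau(q) + O(1)$. A quasi-isometry of $\Z^n$ need not be within bounded distance of any affine map ($\QI(\Z^n)$ contains, for instance, all bilipschitz homeomorphisms of $\R^n$ up to bounded distance, already in dimension one), and the paper's proof is structured precisely around this failure. What the paper actually does (Lemma \ref{central} and the lemma inside its proof) is rescale, $f_t(v)=f(tv)/t$, pass to a limiting fiberwise bilipschitz map, and differentiate at a single point of differentiability to extract one linear map $A$ that intertwines the holonomy translations $\tau_l$ and $\tau'_l$ around loops $l$ only up to an error growing linearly in $|l|$ --- not $O(1)$. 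Converting that linear-growth bound into vanishing in ${H^2}_\infty$ then requires Theorem \ref{vanish}, the Gersten-type criterion whose proof builds a primitive by taking a supremum over paths; this step is entirely absent from your proposal. Relatedly, your assertion that left-translating to a common basepoint makes all the $\Phi_q$ agree up to a uniform bound is unjustified: for distant $q$ the maps $\Phi_q$ differ by pre- and post-composition with the (large) holonomy translations, and a quasi-isometry does not commute with large translations up to bounded error --- that is exactly the point at issue.

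There is also a mismatch in what you take the cohomological condition to mean. Your condition, $A\sigma_1 - \sigma_2 = \delta\tau + (\hbox{bounded})$ with $\tau:Q\to\R^n$ an arbitrary function, says that $A\sigma_1-\sigma_2$ is cohomologous in ordinary group cohomology to a bounded cocycle, i.e.\ lies in the image of bounded cohomology. Triviality in ${H^2}_\infty$ is the a priori weaker condition that the class dies in the $L^\infty$-cohomology of the Rips complex, which by Theorem \ref{vanish} amounts to the bound $|c(l)|\leq C|l|$ over loops. The paper explicitly flags as unresolved whether the kernel of $H^2 \to {H^2}_\infty$ equals the image of bounded cohomology, so an argument establishing the theorem with your stronger reading of the hypothesis would settle that question and should be treated with suspicion. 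Your ``if'' direction is essentially sound for the condition as you state it, but to prove the stated theorem it must be run with an $L^\infty$ $1$-cochain on the Rips complex in place of your bar-resolution $\tau$; this is what Lemma \ref{translational} accomplishes.
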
    

If one knows that two fibers of a bundle sufficiently far apart must be highly distorted (for example if the total space is hyperbolic) then the classification is easier.  See section \ref{section:proper} for details.  One application is :

\begin{theorem}\label{glnzbyzn} Let $\G = \Z^n \rtimes GL_n(\Z)$.  The quasi-isometry group of $\G$ is ${H^1}_\infty(GL_n(\Z), \R^n) \rtimes PGl_n(\Q)$.  Any groups quasi-isometric to $\G$ is commensurable to an extension of $\Z^n$ by a finite index subgroup of $GL_n(\Z)$ which acts in the standard way on $\Z^n$
\end{theorem}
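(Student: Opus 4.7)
The plan is to realize $\G = \Z^n \rtimes GL_n(\Z)$ as a coarse bundle with fiber $\Z^n$ over the base $GL_n(\Z)$ (which is quasi-isometric, up to commensuration, to the symmetric space $\slnr/SO(n)$), and then apply the ``proper'' bundle machinery of section \ref{section:proper}. The crucial input is that the standard $GL_n(\Z)$-action on $\R^n$ has exponentially diverging orbits in distinct directions, so two fibers of $\G$ lying over base points at distance $R$ are quasi-isometrically distorted relative to one another by an amount that grows with $R$. Consequently, any self quasi-isometry $f$ of $\G$ must coarsely respect the fibering: preimages of fibers lie within bounded Hausdorff distance of single fibers, and $f$ descends to a self quasi-isometry of the base.

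The next step is to identify the induced base quasi-isometry. Self quasi-isometries of $GL_n(\Z)$ that are compatible with the bundle structure must preserve the modular holonomy into $\glnr$ up to Hausdorff equivalence, and so come from the commensurator of the standard lattice $GL_n(\Z) < GL_n(\R)$. By Margulis commensurator rigidity this commensurator is $GL_n(\Q)$; the projective quotient appears because the central $\R$-action on the fiber is invisible on the base. This produces the $PGL_n(\Q)$ factor and, combined with fiber rigidity, forces any group $\G'$ quasi-isometric to $\G$ to carry, on a finite-index subgroup, a bundle structure with fiber a $\Z^n$ on which the base acts through the standard representation up to conjugation in $GL_n(\Q)$; rescaling by an element of $GL_n(\Q)$ puts the action into the standard form of an extension of $\Z^n$ by a finite-index subgroup of $GL_n(\Z)$.

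The $H^1_\infty$ term captures the remaining freedom, namely bundle quasi-isometries that cover the identity of the base. Such a quasi-isometry is recorded by assigning to each $g \in GL_n(\Z)$ a bounded-geometry coarse translation of the fiber $\R^n$; the compatibility condition with the group law makes this assignment a coarse $GL_n(\Z)$-cocycle with values in $\R^n$, and two such cocycles induce the same quasi-isometry class precisely when they differ by a bounded coboundary. This yields exactly the reduced ``at infinity'' first cohomology $H^1_\infty(GL_n(\Z), \R^n)$. The $PGL_n(\Q)$ action on the base conjugates these fiber translations, so the two pieces assemble into the semidirect product $H^1_\infty(GL_n(\Z), \R^n) \rtimes PGL_n(\Q)$.

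The main obstacle is the first step: establishing coarse rigidity of the fiber decomposition. One must verify that the standard $GL_n(\Z)$-action on $\Z^n$ is ``proper'' in the sense of section \ref{section:proper}, i.e.\ sufficiently generic that the fiber distortion estimate can actually be used to pin down the fibers under an arbitrary quasi-isometry. After that, the commensurator rigidity of $\slnz$ (invoked through $\glnz$) and the cohomological bookkeeping of fiber translations are, in spirit, routine applications of the framework already developed, though care is needed to separate center-of-fiber contributions from genuine base translations when assembling the semidirect product.
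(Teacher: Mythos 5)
There is a genuine gap at the step you yourself flag as the main obstacle, and the mechanism you propose for closing it is not the right one. Knowing that fibers over distant base points are highly distorted relative to one another (proper holonomy) controls the induced map on the \emph{base} once you already know a quasi-isometry sends fibers near fibers; it does not show that an arbitrary self quasi-isometry of $\G$ respects the fibering in the first place. A priori $f$ could send a coset of $\Z^n$ to a subset that is not Hausdorff-close to any fiber, and no divergence estimate between fibers rules this out. The paper closes this step by a coarse-topology argument imported from \cite{MW}: the fibers are coarse Poincar\'e duality spaces, and the base contains enough branching top-dimensional classes (cosets of the maximal nilpotent subgroups) to separate points, which is what forces fibers to go to fibers. ``Proper in the sense of section \ref{section:proper}'' is a property of the holonomy map $B \to \hat{\Sigma}$, not a tool for detecting fibers under arbitrary quasi-isometries, so verifying it would not fill the hole. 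You are also missing the intermediate step that, once fibers are preserved, the fiberwise maps are at bounded distance from \emph{affine} maps; the paper gets this from pattern rigidity (Corollary \ref{patternrigid}), using the affine foliations $\lambda_p$ produced by eigendirections of semisimple elements of $\glnz$ to supply $d+1$ lines in general position. Without affineness you cannot extract a linear part to feed into the Hausdorff-distance argument.

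A secondary divergence: you identify the base map by appealing to Margulis commensurator rigidity and to self quasi-isometries of $GL_n(\Z)$ coming from the commensurator; the latter is essentially quasi-isometric rigidity of $\glnz$, which fails for $n=2$. The paper deliberately avoids this route: it shows directly (Lemma \ref{ratnerlite}, an elementary argument characterizing rational lines via non-accumulating $\glnz$-orbits) that any $T \in \glnr$ with $T\glnz T\inv$ at finite Hausdorff distance from $\glnz$ lies in $PGL_n(\Q)$, and explicitly notes that the proof works for $n=2$. Your description of the $H^1_\infty$ factor and of the semidirect product structure matches the paper's, but as written the argument does not establish the theorem.
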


\subsection{Acknowledgments}

These results were discovered in 2003 and conversations with Benson Farb, David Fisher, Lee Mosher, and Michah Sageev were all instrumental.   I would like to especially thank Martin Bridson and Benson Farb, without their frequent encouragements the present paper would likely still be unfinished.    I would also like to thank the NSF for their support throughout the process.

\subsection{Notation}

For us a space will, unless otherwise explicitly stated, mean a coarse path space of bounded geometry.   These are precisely the spaces quasi-isometric to graphs of bounded valence.     We will frequents also need to assume our spaces are {\bf uniformly simply connected}, by which we mean that any such model graph can be made simply connected by the addition of $2$-cells of bounded diameter.   For Cayley graphs of groups these conditions amount to finite generation and presentation respectively.

We use the notation $\QI(X)$ for the group of self-quasi-isometries of a space up to the equivalence relation of bounded distance.    When we refer to a quasi-isometry we mean a specific map, not an equivalence class.  This distinction is usually blurred for convenience of exposition, but it will be important in several points in this work.

\section{Coarse Bundles}

In general one can think of a bundle as either a map from the total space to the base (when the fibers 
are the point inverses or something related) or as a foliation of the total space by the fibers (when the base is the leaf space).   Both points of view will be useful for us.  Throughout the following definitions, we will keep returning to the same classes of examples to see what the definition mean in familiar settings.    We start with the most general sort of fibration:

\begin{defn} A {\bf coarse fibration} is a space $X$ (called the total space) and a collection $\F$ of subsets (called the fibers), satisfying, for some $(K,C)$:

\begin{itemize}
\item For each $x \in X$ there is $A \in \F$ with $d(x,A) \leq C$
\item For all $A$ and $B$ in $\F$, $d^H(A,B) \leq K d(A,B) + C$.  In particular, all the elements of $\F$ are at finite pairwise Hausdorff distances.
\end{itemize}

The set $\F$ equipped with the metric coming from Hausdorff distance in $X$ is called the base of the fibration.
\end{defn}

There is a canonical coarse Lipschitz map from $X$ to the base, sending $x$ to any $A$ in $\F$ as in the first condition.  By the second, any two choices are within Hausdorff distance $C(2K+1)$, and so the map is well defined as a coarse map.  The second of the conditions says precisely that this map is coarse Lipschitz.  One can just as easily define a coarse fibration via this map:

\begin{lemmadef}A coarse Lipschitz map $p: X \to B$ is a coarse fibration if and only if it satisfies the {\bf coarse path lifting property}: there are $(K,C)$ such that for any $b,b' \in B$  and $x \in X$ with $d(p(x),b) \leq C$ there is an $x' \in X$ with $d(p(x'),b')\leq C$ and $d(x,x') \leq K d(b,b') + C$.   \end{lemmadef}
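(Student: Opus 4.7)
The plan is to prove the two implications separately, with the canonical coarse Lipschitz projection $X \to \F$ (where $\F$ is metrized by Hausdorff distance) playing the role of $p$ throughout.

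For the forward direction, assume $(X, \F)$ is a coarse fibration with constants $(K, C)$ and let $p$ be the canonical projection. Given $x \in X$ and $b, b' \in \F$ with $d(p(x), b) \leq C$, the triangle inequality gives $d(x, b) \leq d(x, p(x)) + d^H(p(x), b) \leq 2C$, so a point of $b$ lies within $2C$ of $x$, and hence a point of $b'$ lies within $2C + d^H(b, b')$ of $x$. Taking such an $x' \in b'$ yields a valid lift: $p(x')$ may be chosen to be $b'$ itself, and $d(x, x') \leq d_\F(b, b') + 2C$.

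For the reverse direction, suppose $p: X \to B$ is coarse Lipschitz with constants $(K_L, C_L)$ and has coarse path lifting with constants $(K, C)$. Define $\F_b = \{x \in X : d(p(x), b) \leq C\}$ for each $b \in B$ and take $\F = \{\F_b\}$. The first axiom is immediate by choosing $b = p(x)$. For the second, coarse path lifting directly gives $d^H(\F_b, \F_{b'}) \leq K d_B(b, b') + C$; to rewrite this with set distance in $X$ on the right, use coarse Lipschitz in reverse: for $y \in \F_b$ and $y' \in \F_{b'}$, one has $d_B(b, b') \leq d(p(y), p(y')) + 2C \leq K_L d(y, y') + C_L + 2C$, giving $d_B(b, b') \leq K_L \, d(\F_b, \F_{b'}) + C_L + 2C$. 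Substituting yields the coarse fibration inequality, and simultaneously shows the Hausdorff metric on $\F$ is comparable to $d_B$, so the canonical projection of $(X, \F)$ is within bounded distance of $p$.

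The main obstacle is reconciling the three metrics in play: the given target metric $d_B$, the set distance in $X$ that appears in the coarse fibration axiom, and the Hausdorff distance used to metrize the base $\F$. The coarse Lipschitz hypothesis is precisely what lets one compare $d_B$ with set distance in $X$; without it, fibers $\F_b$ and $\F_{b'}$ could be set-close without $b$ and $b'$ being $d_B$-close, and the axiom could fail. Once that comparison is secured, both directions reduce to one-pass applications of the triangle inequality.
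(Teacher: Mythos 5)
Your proof is correct and follows essentially the same route as the paper's: in one direction you define the fibers as preimages of $C$-balls and use the coarse Lipschitz hypothesis to convert the set distance $d(\F_b,\F_{b'})$ back into $d_B(b,b')$, and in the other you chase through the Hausdorff-metric definitions exactly as the paper does. The only cosmetic difference is that you exploit the ambiguity in the canonical projection to declare $p(x')=b'$, where the paper instead bounds $d(p(x'),b')$ by $(K_0+1)C_0$ and enlarges the lifting constant to $\max(2C_0,(K_0+1)C_0)$.
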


The fibers are then the inverse images of balls of radius $C$. We note that coarse path lifting is equivalent to the same statement with the bound $Kd(b_1,b_2) + C$ replaced with any function $f$ of $d(b,b')$.   To see this, fix $R>0$ large enough so that $B$ is a $R$-coarse path space.   Choose an $R$-path $b=b_1,b_2, \cdots, b_n=b'$.   Sequentially lift the path: find $x_1$ with $d(p(x_1),b_1) \leq C$ and $d(x,x_1) \leq f(R)$, then find $x_2$ of $b_2$ with $d(p(x_2),b_2) \leq C$ and $d(x_2,x_2) \leq f(R)$, \ldots.  This gives an $f(R)$-path in $X$ beginning at $x$ and ending at an $x'$ with $d(p(x'),b') \leq C$.  The length of this path is at most $f(R)n$ and $n$ is bounded linearly in $d(b,b')$, so the path lifting property holds.  This construction is the motivation for the terminology.

\begin{proof}

If $p:X \to B$ has the path lifting property, we define the fibers as the inverse images under $p$ of balls of radius $C$ in $B$, $\F = \{ A_b: p^{-1}(B(b,C)\}_{b\in B}$.  Since $p$ is $C$-onto, these fibers cover $X$.    Consider two fibers $A_b$ and $A_{b'}$.   If $x \in A_b$ and $x' \in A_{b'}$ then $d(p(x),p(x')) \geq d(b,b') - 2C$. Since $p$ is coarse lipschitz, $d(x,x')$ is bounded below by an affine function of $d(p(x),p(x'))$.  Thus $d(A_b,A_{b'})$ is bounded below by an affine function of $d(b,b')$.  So we need to check that $d^H(A_b,A_b')$ is bounded above by an affine function of $d(b,b')$.   This is precisely the statement of the coarse path lifting property.

Conversely, suppose we have a space $X$ and a collection $\F$ which define a coarse fibration.  To avoid confusion, let $K_0$ and $C_0$ be the constants in the definition of coarse fibration.  As above define a coarse lipschitz map $p:X \to B$ where $B$ is the set $\F$ with the Hausdorff metric.  We need to check that $p$ has the coarse path lifting property.   Let $b$ and $b'$ be points of $B$, and let $x \in X$ have $d_B(p(x),b) \leq C_0$.   By the definition of $p$ there is a $y \in p(x)$ with $d(y,x) \leq C_0$.  Since $d_B(p(x),b) = d^H(p(x),b)$ there is a $z$ in $b$ with $d(z, y) \leq C_0$, and so $d(x,z) \leq 2C_0$.   Now, since $d_B(b,b')$ is the Hausdorff distance from $b$ to $b'$ there is an $x'$ in $b'$ with $d(x',z) \leq d(b,b')$.    Since $d(x',p(x')) \leq C_0$, we have $d_X(p(x'),b') \leq C_0$ and by the definition of a coarse fibration, $d^H(p(x),b') \leq (K_0+1)C_0$.  So, $d_B(p(x'),C_0) \leq (K_0+1)C_0$ and $d(x,x') \leq d(b,b') + 2C_0$.  Thus $p$ is a coarse fibration with $K=1$ and $C=\max(2C_0, (K_0+1)C_0)$.

\end{proof}

{\bf Example: coset foliations}
Let $\G$ be a finitely generated group, and let $A$ be a subgroup.  One has from this
two natural foliations of $\G$ : the left and right coset foliations.   Dually, one has the two quotient maps from $\G$ to $A \backslash \G$ and $\G / A$.  

Consider first the foliation by right cosets $Ag$ (or, equivalently, the map $\G \to A \backslash \G$).   Given cosets $Ax$ and $Ay$, the distance between $ax$ and $ay$ is independent of $a$, and so the cosets are at finite Hausdorff distance.   Further, this distance is equal to the infimum of distances of coset representatives, so $d^H(Ax,Ay) = d(Ax,Ay)$.   Thus one gets a coarse fibration.  The base of this fibration, $A \backslash \G$, can be thought of as the graph with vertices $A \backslash \G$ and edges between $Ax$ and $Axs$ for all $s$ in the generating set for $\G$.  Notice that this is not the normal construction of the coset graph, and in particular, $\G$ does not act on this space (of coarse, $\G$ does act on $A \backslash \G$ by right multiplication, but this does not respect the metric structure).  

Next, consider the foliation of $\G$ by left cosets.   This will typically not be a coarse fibration.   A coset $xA$ is at finite Hausdorff distance from $A$ if and only if  $A$ and $xAx^{-1}$ are commensurable.   So to have a coarse fibration one must have that every conjugate of $A$ is commensurable to $A$.  We call such a subgroup {\bf almost normal}.  If $A$ is normal then the foliations by left and right cosets are the same, so we are in the previous case (note that this is precisely the subset of the previous case where there is a natural isometric $\G$ action).   For any subgroup $A$, the space $\G/A$ has a natural graph structure,  the coset graph of $A$, where $xA$ and $yA$ are adjacent if $x^{-1}y \in ASA$ where $S$ is the given generating set for $\G$.   Note that this graph has finite valence precisely when $A$ is almost normal.  

Coset foliations can be generalized somewhat to include all the examples of interest in this paper:

\begin{lemma}\label{lemma:action} Let $\G$ be a finitely generated group, and let $X$ be a metric space of coarsely bounded geometry.    Suppose $\G$ acts isometrically on $X$ with bounded quotient.  For any $x_0 \in X$ the map $\G \to X$ sending $\g \mapsto \g x_0$ is a coarse fibration. 
\end{lemma}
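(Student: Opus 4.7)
The plan is to verify the coarse path lifting property for the map $p: \G \to X$ with $p(\g) = \g x_0$, using the equivalent step-wise formulation noted in the preceding discussion. That $p$ is coarse Lipschitz follows directly from the action being isometric: setting $M = \max_{s \in S} d(s x_0, x_0)$ for a finite generating set $S$ of $\G$, one has $d(\g x_0, \g' x_0) = d(x_0, \g\inv \g' x_0) \leq M \cdot d_\G(\g, \g')$ by the triangle inequality along a word for $\g\inv\g'$; $p$ is coarsely onto by the bounded quotient hypothesis. The decisive structural feature is that $p$ is $\G$-equivariant for the isometric left-multiplication action of $\G$ on itself and the given action on $X$, so verifying lifting at an arbitrary $\g$ reduces by translation to verifying it at $\g = e$.

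Fix $D$ bounding the diameter of $X/\G$ and $R$ large enough that $X$ is an $R$-coarse path space, and set the working constant $C = D + \e$ for a fixed $\e > 0$. It then suffices to prove a one-step statement: there is a single $N_0$ such that for every $z \in X$ with $d(z, x_0) \leq R + C$ there exists $\sigma \in \G$ with $d(\sigma x_0, z) \leq C$ and $|\sigma|_\G \leq N_0$. Granted this, one lifts an $R$-path from $b$ to $b'$ of length linear in $d(b,b')$ (as guaranteed by the coarse path space assumption) step by step: given $\g_i$ with $d(\g_i x_0, z_i) \leq C$, apply the one-step statement to $\g_i\inv z_{i+1}$, which lies in the ball of radius $R + C$ about $x_0$, and set $\g_{i+1} = \g_i \sigma_i$. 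The resulting $\g'$ satisfies $d_\G(\g, \g') \leq N_0 \cdot n$, which is the required linear lifting bound.

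The main obstacle is the existence of the uniform $N_0$; here the argument diverges from the classical Milnor--Schwarz lemma, since without properness the set $\{\sigma : d(\sigma x_0, x_0) \leq R + C\}$ can contain elements of arbitrary word length (consider a kernel acting trivially on a factor). The point is that we need only the existence of \emph{one} representative of bounded word length, not a bound on all of them. Since $X$ has bounded geometry, the ball $B(x_0, R + C)$ admits a finite $\e$-net $\{y_1, \ldots, y_m\}$; for each $y_i$, the bounded quotient hypothesis provides at least one $\tau_i \in \G$ with $d(\tau_i x_0, y_i) \leq D$. Fix one such $\tau_i$ per net point and set $N_0 = \max_i |\tau_i|_\G$, which is finite because $m$ is. For arbitrary $z$ in the ball, picking the closest net point $y_i$ gives $d(\tau_i x_0, z) \leq D + \e = C$, so $\sigma := \tau_i$ satisfies $|\sigma|_\G \leq N_0$ and supplies the required lift, completing the verification.
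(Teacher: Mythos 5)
Your proof is correct and follows essentially the same route as the paper's: reduce to lifting at the identity by $\G$-equivariance, then use bounded geometry to produce a finite net of the relevant ball and coboundedness to place an orbit point of bounded word length near each net point, taking the maximum of these finitely many word lengths as the lifting constant. The only difference is expository — you make explicit the step-wise reduction and the observation that one needs only a single bounded-length representative rather than properness — but the underlying argument is identical.
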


\begin{proof}

Fix $x_0$ and let $p$ be the map $p(\g) = \g x_0$.   We have $d(p(\g),p(\g'))= d(\g x_0, \g' x_0) = d(x_0, \g^{-1} \g' x_0)$.   If $\g^{-1}\g' = s_1 s_2 \cdots s_n$ then $d(x_0, \g^{-1} \g' x_0) = d(x_0, s_1 \cdots s_n x_0) \leq \Sigma_{i=1}^n d(s_1 \cdots s_{i-1} x_0, s_1 \cdots s_i x_0) \leq n \sup_i d(s_i x_0, x_0)$.  So $p$ is lipschitz with constant the maximum distance a generator moves $x_0$. 

As remarked in the definition of coarse path lifting, it suffices to check that there is a $C>0$ such that for all $R$ there exists an $R'$ so that given $\g$ in $\G$ and $x,x' \in X$ with $d(x,x') \leq R$ and $d(\g x_0, x) \leq C$ that there is a $\g' \in \G$ with $d(\g' x_0, x') \leq C$ and $d(\g, \g') \leq R'$.   Since everything is $\G$ invariant, we may assume $\g$ is the identity.   Path lifting then amounts to the following:  there is a $C>0$ such that for any $R$ there is $R'$ so that if $d(x,x_0) \leq R$ there is a $\g$ with $|\g| \leq R'$ and $d(x,\g x_0) \leq C$.  

Since $X$ has bounded geometry there is an $r_0$ such that for any $R$, $B(x_0,R)$ is covered by finitely many balls of radius $r_0$.   Since the $\G$ action is cobounded, there is an  $r_1$ (still independent of $R$) so that a finite collection of $r_1$-balls centered at points in $\G x_0$ cover $B(x_0,R)$.   Let $A \subset \G$ be a finite set such that the $r_1$ balls centered at $A x_0$ cover $B(x_0,R)$.   Then we have the desired property with $C=r_1$ and $R' = \sup_{\g \in A} |\g|$.

\end{proof}

If $X$ is a locally finite simplicial complex and the $\G$ action is simplicial, then $X$ is quasi-isometric to $\G/A$ where $A=stab(x_0)$ (note that the commensurability class of $A$ does not depend on $x_0$, and that $A$ is almost normal), and the coarse fibration $\G \to X$ reduces to the left coset foliation.   If the action of $\G$ on $X$ is not simplicial, for example $\Z^2$ acting on $\R$ by two rationally independent translations, then the fibration $\G \to X$ need not be equivalent to any coset foliation.  On the other hand, it is possible for $\G/A$ to have coarsely bounded geometry even when $A$ is not almost normal (indeed, if $\G$ is boundedly generated by conjugates of $A$ then $\G/A$ is bounded).   In these cases the map $\G \to \G/A$ is a coarse fibration, but the fibers are not coarsely equivalent to the cosets of $A$.

We will be interested in somewhat more restricted types of coarse fibrations.  In particular, in a coarse fibration the fibers are all quasi-isometric (in the metrics induced from $X$) but not uniformly so (consider the fibers in a right coset foliation).    When, as in the examples here, there is a cocompact groups of symmetries one naturally has such identifications.   When we have a fixed model space for the fiber we call it  a {\bf coarse bundle}:

\begin{defn} Fix a space $F$.  A coarse fibration $p:X \to B$ is a {\bf coarse bundle} with fiber $F$ if there is an $r>0$,  a proper function $\rho:\R^+ \to \R^+$ and $(K,C)$  such that every fiber of $p$ is a within Hausdorff distance $r$ of a $(K,C,\rho)$ uniformly proper embedding of $F$ in $X$.
\end{defn}

Since all the fibers of any coarse fibration are quasi-isometric, one could simply demand that all the fibers are uniformly quasi-isometric to $F$.   The point of the definition is that we generally think of the model fiber $F$ in an intrinsic path metric which is usually different from the induced metric as a subset of $X$.  It would be equivalent to define a coarse bundle with fiber $F$ to be a coarse fibration such that for some $r>0$ the $r$-path metrics on fibers are uniformly quasi-isometric to $F$.   Note that (as will always be the case for us), if $F$ is a coarse path space then any coarse bundle has fibers which are uniformly coarsely connected in $X$. 

{\bf Example: cosets foliations again} Let $\G$ be a finitely generated group and $A$ a finitely generated subgroup.  For the coarse fibration $\G \to A \backslash \G$ to be a coarse bundle, all the right cosets of  $A$ must be (uniformly) coarsely connected.   Since left translations are isometries, this is equivalent to assuming that all the conjugates of $A$ are $r$-coarsely connected for some $r$.   This means they are each generated by their intersections with the ball of radius $r$.   There are only finitely many subsets of the ball of radius $r$, so this implies there are only finitely many conjugates of $A$ - so $A$ is normalized by a finite index subgroup of $\G$.   Thus essentially the only right coset fibrations that are coarse bundles are those that are equivalent to left coset fibrations.

On the other hand coarse fibrations coming from lemma \ref{lemma:action} have all fibers uniformly quasi-isometric since these fibrations are invariant under the isometric actions of $\G$.    Thus they are coarse bundles as soon as any one fiber is coarsely connected.   For simplicial actions this amounts to assuming the simplex stabilizers are finitely generated.  For more general actions this is related to Bieri-Neumann-Strebel invariants, see \cite{Geo} for some interesting work on this for actions on $CAT(0)$-spaces.

\begin{lemmadef} Let $X \to B$ be a coarse bundle with fiber $F$.  For any $r>0$ there are $(K_r,C_r)$ so that for any two points $b$ and $b'$ with $d(b,b') \leq r$ the closest point projection map  between the fibers of $X$ over $b$ and $b'$ induces a $(K_r,C_r)$ quasi-isometry, $\phi_{b,b'}$ between the fibers in their intrinsic metrics.   These quasi-isometries are well-defined to within a distance $D_r$.  Further, if $\psi$ is any map between the fiber over $b$ and the fiber over $b'$ such that the distances in $X$ bewteen $f$ and $\psi(f)$ are uniformly bounded for $f$ in the fiber over $b$ then $\psi$ is at bounded distance from $\phi_{b,b'}$.

\end{lemmadef}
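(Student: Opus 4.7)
The plan is to let $\phi_{b,b'}: F_b \to F_{b'}$ be any choice of closest-point projection (picking for each $f \in F_b$ some point of $F_{b'}$ within $d_X$-distance $H(r) := Kr + C$, where $(K,C)$ are the coarse fibration constants), and verify each assertion using two ingredients: this linear Hausdorff bound, and the uniform properness of the model fiber $F$ embedded in $X$. The latter, combined with the uniform QI between $F$ and each fiber's intrinsic metric, packages into a single proper function $\tilde\rho$ such that $d_{\mathrm{intr}}(f, f') \leq \tilde\rho(d_X(f, f'))$ whenever $f, f'$ lie in a common fiber. This will be the workhorse estimate, used repeatedly.

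The heart of the proof is showing that $\phi_{b,b'}$ is $(K_r,C_r)$-coarsely Lipschitz in the intrinsic metrics. A naive estimate $d_X(\phi(f_1), \phi(f_2)) \leq d_X(f_1, f_2) + 2H(r)$ followed by one application of $\tilde\rho$ only gives a proper (non-linear) bound, which is not strong enough. The fix is to chop first. Because $F_b$'s intrinsic metric is uniformly QI to the coarse path space $F$, any two points $f_1, f_2 \in F_b$ can be joined by a chain $f_1 = g_0, g_1, \ldots, g_n = f_2$ with $d_b(g_i, g_{i+1}) \leq 1$ and $n$ linear in $d_b(f_1, f_2)$. Each step has $d_X$-length at most $1$, so after projection each $d_X(\phi(g_i), \phi(g_{i+1})) \leq 1 + 2H(r)$. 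Uniform properness then bounds each step's intrinsic $d_{b'}$-length by the \emph{constant} $\tilde\rho(1+2H(r))$, and summing over $n$ terms yields the required linear bound. The analogous argument handles $\phi_{b',b}$.

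Once both $\phi_{b,b'}$ and $\phi_{b',b}$ are coarsely Lipschitz, the QI claim is immediate: $\phi_{b',b} \circ \phi_{b,b'}$ has $d_X$-displacement $\leq 2H(r)$ on $F_b$, hence intrinsic displacement $\leq \tilde\rho(2H(r))$, making the two maps coarse inverses. The uniqueness statement is the same observation applied to two choices of projection, which differ in $d_X$ by at most $2H(r)$ and hence in $d_{b'}$ by at most $D_r := \tilde\rho(2H(r))$. The final clause, for any $\psi$ with uniform $X$-displacement $\leq D$, follows from $d_X(\psi(f), \phi_{b,b'}(f)) \leq D + H(r)$ and one more invocation of $\tilde\rho$.

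The one genuinely subtle point is the linearity in the second paragraph: it is essential to pass to unit-intrinsic-length steps \emph{before} invoking $\tilde\rho$, since invoking it once on the full distance would convert a linear relation into a merely proper one and destroy the QI conclusion. This is the same spirit as the coarse path-lifting trick from the previous lemma/definition; everything else is bookkeeping with the Hausdorff bound and uniform properness.
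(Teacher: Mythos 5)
Your proposal is correct and follows essentially the same route as the paper: bound the Hausdorff distance between nearby fibers, observe that closest-point projection is then a $(1,2R)$ quasi-isometry in the induced metrics, and convert to the intrinsic metrics via uniform properness, with the uniqueness and $\psi$ clauses handled by one further application of the properness function. The only difference is that your chopping-into-unit-steps argument spells out in detail the step the paper compresses into the single sentence ``since the fibers have uniform distortion, and are coarse path spaces the desired $(K_r,C_r)$ exist,'' and your identification of where linearity would be lost is exactly the right point to be careful about.
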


\begin{proof}

By assumption there is an $R$ (depending only on $r$) so that the given fibers are at Hausdorff distance at most $R$.  Thus, closest point projection, as a map between the fibers with their induced metrics, is a $(1,2R)$ quasi-isometry.   Since the fibers have uniform distortion, and are coarse path spaces the desired $(K_r,C_r)$ exist.  While the closest points need not be unique, any two closest points are within $2R$ as measured in $X$.   This means the quasi-isometries are within $D_r=\rho^{-1}(2R)$ measured in the fibers.  Likewise, if $\psi$ is a map from the fiber over $b$ to the fiber over $b'$ with $d(f,\psi(f))\leq M$ for all $f$, then the distance from $\psi(f)$ to any closest point to $f$ in the fiber over $b'$ is at most $M+R$.  Thus the distance between $\psi$ and $\phi_{b,b'}$ is at most $\rho^{-1}{R+M}$.

\end{proof}

This gives an important way of viewing coarse bundles - as built out of local gluing data on the base.    
Given a coarse bundle over $B$ with fiber $F$, choose, for each fiber $b \in B$ a $(K,C)$ quasi-isometry between the fiber over $b$ (in its path metric) and $F$.   Using these identifications of the fibers with $F$ and the previous lemma, for every pair $b$ and $b'$ we have a quasi-isometry of $F$ whose constants depend only on $d(b,b')$.  In many cases we can see this connecting data fairly explicitly:

{\bf Example: group extensions:} Let $\G$ be a finitely generated group and $A$ a normal subgroup.   For each left coset $x \in \G/A$ choose a coset representative $\hat{x}$, and identify the fiber over $x$ with $A$ by the map $a \mapsto \hat{x} a$.  Since left translations are isometries, these are all uniform quasi-isometric identifications.    Now, given $x$ and $y$ in $\G/A$,  let $\delta$ be of minimal norm in $x^{-1}yA$.   Right translation by $\delta$ move each point of the fiber over $x$ to a closest point in the fiber over $y$ (since left and right cosets are equal).   Under the given identifications with $A$, the map $\phi_{x,y}$ is conjugation by $\hat{y}^{-1} \hat{x}$ composed with right translation by $\hat{y}^{-1} \hat{x} \delta$ (note that this last term in in $A$).   If $\G$ is the semi-direct product $A \semi \G/A$ and we choose the elements of $\G/A$ as our coset representatives then the map $\phi_{x,y}$ is just conjugation by $x^{-1}y$ on $A$.

Under mild assumptions the gluing data determines the bundle: Fix a bundle $X$ with associated data $\phi_{b,b'}$ as above.   From this one can build the space $F \times B$ with the maximal metric such that distances along the fibers $F \times b$ are at most the distance in $F$ and the distance between $(f,b)$ and $(\phi_{b,b'}(f),b')$ are bounded by $d(b,b')$.   By construction the map from this space to $X$ is coarse Lipschitz, and it is not hard to see it is a quasi-isometry (this is where the assumption that the fibers are uniformly properly embedded comes in).

However, not all collections of such maps arise from coarse bundles.  Suppose we are given $B$ and $F$, and for every $b,b'$ a quasi-isometry $\phi_{b,b'}$ whose constants depend only on $d(b,b')$.   From the previous discussion we know how to build a model of what the space $X$ would need to be.  There are compatibility conditions that must be satisfied to get a bundle, namely, for for any $r$-path $b_1,b_2, \cdots, b_n$ the composition $\phi_{b_{n-1},b_n} \phi_{b_{n-1},b_{n-2}} \cdots \phi_{b_1,b_2}$ has to be at bounded distance from $\phi_{b_1,b_n}$, and this distance must be bounded only in terms of $n$ and $r$.    

If $B$ is an $r$-path space, this means that we need only specify $\phi_{b,b'}$ for $d(b,b') \leq r$ to determine the bundle.  These will all be uniform quasi-isometies.  The compatibility condition then says that the composition of these maps around a loop in the $r$-thickening of $B$ is at bounded distance from the identity, with the distance bounded in terms of the length of the loop.    Thus it is natural to try to think of this data as some sort of cohomology class on $B$ with coefficients in quasi-isometries of $F$.   precisely what this means is trickier than it might seem at first.

\begin{defn} Let $Ext(B,F)$ be the set of coarse bundles over $B$ with fiber $F$ up to coarse equivalence over $B$ (meaning quasi-isometries between the total spaces which commute up to bounded distance with the maps to $B$).
\end{defn}

We must define what we mean by cohomology with coefficients in $\QI(F)$.    For our purposes what is relevant is a version of $L^\infty$-cohomolgy.  We give a brief discussion here and point the reader to \cite{CoarseHomology} for a more in depth discussion of the issues around coarse cohomology theories.

\begin{defn} Let $X$ be a simplicial complex and $F$ a coarse path space.  Define ${C^i}_\infty (X, QI(F))$ as the set of maps from the (oriented) $i$-cells of $X$ to $\QI(F)$ for which the image is uniform (has uniform quasi-isometry constants).    
\end{defn}

Since $\QI(F)$ is not Abelian, we cannot define a coboundary in general, but as usual can do so well enough to define $H^1$.    We call an element $c$ of $C^1$ closed if for every triangle $\sigma$ with edges $e_1,e_2,e_3$ we have $c(e_1)c(e_2)c(e_3) = Id$.   Let $Z^1_\infty$ be the subset of closed elements of $C^1_\infty$.   Two elements $c_1$ and $c_2$ of $C^1$ are cohomologous if there is an $f$ of $C^0$ for which $c_2(e) = f(\tau e)\inv c_1(e) f(\iota e)$ for all edges $e$.  We define ${H^1}_\infty(X,\QI(F))$ as the set cohomology classes of closed elements of ${C^1}_\infty$.

To apply this in a coarse setting we do the usual trick of thickening $B$ to its $r$-Rips complex $R_r(B)$.     We can then look at ${H^1}_\infty (R_r(B),\QI(F))$ for various $r$.   If $B$ is uniformly simply connected then this stabilizes for some $r$ - if not we must take limits (or, better, use the pro-system).   Coarse simple connectivity is a non-trivial restriction - for groups it is equivalent to finite presentability.    However, all the examples in the current paper are coarsely simply connected so for simplicity we will assume that - the more general case is similar but with more cumbersome notation.

\begin{lemmadef}  Fix spaces $B$ and $F$ as above.  There is a well defined map $H:Ext(B,F) \to {H^1}_{\infty} (B,\QI(F))$.  
\end{lemmadef}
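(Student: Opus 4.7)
The plan is to build a representative cocycle $c \in Z^1_\infty(R_r(B),\QI(F))$ from a given coarse bundle, check that the resulting cohomology class depends on neither the auxiliary choices made nor the representative bundle, and define $H$ by $H([p:X\to B]) = [c]$. First I would fix $r$ large enough that $B$ is $r$-coarsely simply connected so that ${H^1}_\infty$ is computed on $R_r(B)$. For each vertex $b \in B$, choose a $(K,C)$-quasi-isometry $\psi_b : F \to F_b$ from the model fiber to the fiber over $b$ in its intrinsic metric. For each oriented edge $e$ from $b$ to $b'$ in $R_r(B)$, set
\[
c(e) \;:=\; \psi_{b'}^{-1} \circ \phi_{b,b'} \circ \psi_b,
\]
where $\phi_{b,b'}$ is the fiberwise quasi-isometry from the preceding Lemma/Definition. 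Since $d(b,b') \leq r$ and the constants $(K_r, C_r)$ of $\phi_{b,b'}$ depend only on $r$, the cochain $c$ lies in $C^1_\infty$.

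Next I would verify closedness. For a triangle with vertices $b_1,b_2,b_3$ and cyclically oriented edges, the triple product $c(e_1)c(e_2)c(e_3)$ telescopes the $\psi$'s and reduces to the claim that $\phi_{b_3,b_1}\circ\phi_{b_2,b_3}\circ\phi_{b_1,b_2}$ is at bounded intrinsic distance from the identity on $F_{b_1}$. Each $\phi$ is implemented by closest-point projection in $X$, so the composition moves every point of $F_{b_1}$ by a uniformly bounded amount in $X$; the uniform properness clause in the definition of a coarse bundle then converts this to a bounded displacement in the intrinsic metric, giving the identity in $\QI(F_{b_1})$. Hence $c \in Z^1_\infty$.

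For well-definedness on choices, changing the trivializations from $\psi_b$ to $\psi'_b$ yields a cocycle $c'$, and the $0$-cochain $f(b) := \psi_b^{-1}\circ \psi'_b$ satisfies exactly the relation $c'(e) = f(\tau e)^{-1} c(e) f(\iota e)$, so $[c] = [c']$. For bundle independence, a quasi-isometry $g : X_1 \to X_2$ commuting up to bounded distance with the projections sends the $b$-fiber of $X_1$ to within finite Hausdorff distance of the $b$-fiber of $X_2$, inducing (after composing with the chosen trivializations) a well-defined element $g_b \in \QI(F)$; the uniqueness-up-to-bounded-distance part of the preceding Lemma/Definition then shows that $g_\bullet$ is a $0$-cochain relating the two cocycles, so the classes agree. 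Independence of $r$ is built into the assumption of coarse simple connectivity, which is why this restriction was made.

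The main obstacle I expect is keeping the various ``bounded distances'' uniform: closedness around every triangle, the comparison between compositions of $\phi$'s along different paths, and the relation $g\circ \phi_{b,b'} \approx \phi_{b,b'}\circ g$ used in the bundle-independence step must all have bounds depending only on $r$ and on the bundle constants, not on the location in $B$. The uniform properness of the fiber embeddings and the dependence of $\phi_{b,b'}$ only on $d(b,b')$ built into the definition of a coarse bundle are exactly what is needed; once uniformity is granted, the rest is bookkeeping in the noncommutative $H^1$ formalism.
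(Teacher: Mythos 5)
Your proposal is correct and follows essentially the same route as the paper: define the cocycle via closest-point projections conjugated by chosen fiber trivializations, verify closedness by noting the composition around a triangle moves points a uniformly bounded amount in $X$ (hence, by uniform properness, in the intrinsic fiber metric), and show an equivalence of bundles induces a $0$-cochain exhibiting the cohomology. The only difference is that you separately check independence of the trivializations, which the paper leaves implicit as a special case of the bundle-independence step.
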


\begin{proof}

We have already seen how to define $H(p)$ for a bundle over $B$ with fiber $F$.   Namely, choose uniform quasi-isometries of the fibers with $F$ and then for any pair $(b,b')$ let $\phi_{b,b'}$ be the equivalence class of any closest point projection from the fiber over $b$ to the fiber over $b'$, thought of as a self map of $F$.   For any $r$, the values of this map on pairs with $d(b,b') \leq r$ lie in a uniform subset of $\QI(F)$.    For any coarse $2$-simplex in $B$, $(b,b',b'')$, $d\phi(b,b',b'') = \phi(b'',b)\phi(b',b'')\phi(b,b')$.  This is represented by the composition of the corresponding closest point maps, and so defines a map from $F$ to itself which represents a map from the fiber over $b$ to itself which moves points a bounded amount, and is therefore the trivial class in $\QI(F)$.   This shows that $d\phi = 0$.   We define $H(p)$ as the cohomology class of $\phi$.

Suppose $p':X' \to B$ is another bundle with fiber $F$ which is equivalent to $p$.  This means that there is a quasi-isometry $g:X \to X'$ such that $(p'(g(x))=p(x)$ for all $x$.    Using the identifications of the fibers with $F$, this gives a map $G : B \to QI(F)$ which takes values in a uniform subset of $QI(F)$.  The fact that these fiberwise quasi-isometries assemble to a quasi-isometry $X' \to X$ means that for all pairs $(b,b')$ in $B$, $G(b') [ \phi'_{b,b'}] G(b)^{-1}$  is a map between the fibers in $X$ over $b$ and $b'$ which moves points a uniformly bounded distance in $X$, and so is at bounded distance from $\phi_{b,b'}$ and so equal to it in $\QI(F)$.  This shows that $\phi$ and $\phi'$ are uniformly cohomologous, and so define the same element of ${H^1}_\infty(B,\QI(F))$.

\end{proof}

The cohomology class $H(p)$ does not determine the bundle. The difficulty is that we had to pass to the equivalence classes of quasi-siometries of $F$ to get a cocycle (or, for that matter, an actual group of coefficients).  This means, for example, one gets a trivial cocycle from any bundle where all the maps $\phi_{b,b'}$ are at bounded distance from the identity as quasi-isomtries of $F$.  Two examples of non-trivial bundles of this sort:

Let $\G$ be the $3\times 3$ integral Heisenberg group and let $A$ be the center.    The previous 
calculations show that all the maps $\phi_{b,b'}$ for the bundle $\G \to \G/A = \Z^2$ are translations of the fibers, which are at bounded distance from the identity.   Central extensions in general give examples where the cocycle is trivial.   From the non-coarse point of view this is well understood - there is a secondary invariant in $H^2$ of the base that controls central extensions.  See section \ref{central} for some coarse versions of this.  However, this is not the whole story.   When the base is one dimensional, such issues should not arise.  However:

Let $\phi_n: \Z \to \Z$ be the map defined by $\phi_n(x)=2x$ if $|x| \leq n$, $\phi_n(x) = x + n$ for $x>n$ and $\phi_n(x) = x-n$ if $x<-n$.    It is not hard to check that these maps are all $(2,1)$ quasi-isometries of $\Z$, and are all at finite distance from the identity (although not uniformly so).   Let $X$ be the bundle with these as connecting maps, namely the graph with vertex set $\Z \times \Z$ with $(a,b)$ connected to $(a + 1,b)$, $(\phi_b(a), b+1)$.   This bundle is not equivalent to the trivial bundle $\Z \times \Z$.  Indeed, $X$ is not quasi-isometric to $\Z \times \Z$ as there is no polynomial bound on the volumes of balls (the space $X$ can be thought of as the Euclidean plane modified by removing an infinite wedge and gluing in a piece of the hyperbolic plane in its place - balls in the hyperbolic part grow exponentially).

It is useful to consider further the analogy with the problem of classifying group extensions, say of $B$ by $A$.   As our earlier example explains, the cohomology class $H(p)$ is analogous to the map $B \to Out(F)$.   When $F$ has no center, this is a complete invariant.   When $F$ has center, one has to use a secondary invariant in $H^2(B,Z(F))$ (where $Z(F)$ is the center of $F$ thought of as a $B$-module).   Generalizing this to our setting is problematic.  The analogue of the center is the collection of quasi-isometries at bounded distance from the identity.   This is not Abelian, nor even a group, so defining anything like $H^2$ with these coefficients is prooblematic.   We will come back to this issue later in section \ref{sec:notame}.   First we turn to the simpler case, analogous to the center-free case of group extensions.

\begin{defn}  A space $F$ is {\bf tame} if for every $(K,C)$ there is an $R>0$ so that every $(K,C)$-quasi-isometry of $F$ is either at infinite distance from the identity or moves no point more than $R$.
\end{defn}  

Tameness is discussed in detail in \cite{whyte:tame}, for now we give some basics examples:

\begin{lemma}  If $X$ is a cocompact, non-elementary Gromov hyperbolic metric space then $X$ is tame.
\end{lemma}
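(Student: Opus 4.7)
The proof rests on the boundary action of quasi-isometries. A $(K,C)$-quasi-isometry $f:X\to X$ induces a homeomorphism $\partial f$ on the Gromov boundary $\partial X$, and if $f$ lies within distance $D<\infty$ of the identity then $\partial f$ is the identity: for any sequence $x_n$ converging to $\xi\in\partial X$ we have $d(x_n,f(x_n))\leq D$, so $f(x_n)\to\xi$ as well. Thus every boundary point is fixed by $f$, and since $X$ is non-elementary, $\partial X$ contains at least three distinct points on which to anchor the argument.

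The main geometric step is to bound the displacement of the \emph{tripod center} of an ideal triangle. Given three distinct $\xi_1,\xi_2,\xi_3\in\partial X$, there is a point $c\in X$ within $O(\delta)$ of each of the three sides $[\xi_i,\xi_j]$, and such $c$ is unique up to $O(\delta)$. Since $f$ fixes each $\xi_i$, the image $f([\xi_i,\xi_j])$ is a $(K,C)$-quasi-geodesic with the same ideal endpoints, hence by the Morse lemma lies within some $M(K,C,\delta)$-neighborhood of $[\xi_i,\xi_j]$. Consequently $f(c)$ is within $O(\delta)+M$ of each of the three sides, and tracking thin-triangle constants yields $d(c,f(c))\leq R_0(K,C,\delta)$.

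To spread this bound across $X$, I invoke cocompactness. Fix one ideal triple with tripod center $c_0$; then for any isometry $g$ of $X$, the point $g\cdot c_0$ is the tripod center of the triple $(g\xi_1,g\xi_2,g\xi_3)$, which $f$ again fixes because every boundary point is fixed. The previous step gives $d(g\cdot c_0,f(g\cdot c_0))\leq R_0$ uniformly in $g$. Since the isometry group acts cocompactly, the orbit $G\cdot c_0$ is $T$-dense in $X$ for some $T$, so every $p\in X$ lies within $T$ of some $g\cdot c_0$, and the triangle inequality combined with the $(K,C)$-bound on $f$ gives
\[
d(p,f(p))\leq d(p,g\cdot c_0)+d(g\cdot c_0,f(g\cdot c_0))+d(f(g\cdot c_0),f(p))\leq (K+1)T+C+R_0,
\]
which is the required uniform bound $R=R(K,C)$.

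The main obstacle is the tripod-center estimate in the second step: it requires a careful application of the Morse lemma for quasi-geodesics sharing ideal endpoints, together with the observation that the coarse intersection of the three sides of an ideal triangle is well-defined up to a constant depending only on $\delta$. Once that piece is in place, the passage to every point of $X$ via the cocompact action is a formal triangle-inequality computation.
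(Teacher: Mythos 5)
Your proof is correct and follows essentially the same route as the paper: both arguments anchor three points at infinity (which $f$ must fix, being at finite distance from the identity), use stability of quasi-geodesics to show the tripod vertex / ideal-triangle center is displaced by at most a constant depending only on $(K,C)$ and $\delta$, and then use cocompactness to propagate the bound to every point of $X$. The only cosmetic difference is that you translate a single ideal triple around by the isometry group, whereas the paper places a tripod vertex near each point directly; the geometric content is identical.
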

 
\begin{proof}

Since $X$ is non-elementary, there are at least three points at infinity.  These three points can
be coned off to get a quasi-isometrically embedded copy of the infinite tripod in $X$.  By cocompactness, any point in $X$ is within a uniformly bounded distance of the vertex of such a
tripod.  Let $f$ be a $(K,C)$ quasi-isometry at finite distance from the identity.  For any tripod $T$ with vertex $x$, $f(T)$ is a quasi-isometrically embedded tripod with the same endpoints.  The three branches of the tripod are quasi-geodesics (with constants depending only on $(K,C)$ and $X$).  By hyperbolicity there is some $r$ so that each is within $r$ of the geodesic ray from $f(x)$ to the endpoint at infinity.  This implies that either $f(x)$ is within $r$ of the corresponding ray of $T$, or $x$ is within $r$ of the ray from $f(x)$.  Applied to all three branches, this implies $f(x)$ is
within $r$ of $x$.  Since every $x$ in $X$ is within a uniform distance, $D$, of the vertex of some tripod, this implies $f$ is within $D+r+KD+C$ of the identity.
 
\end{proof}
 
Tameness is much weaker than hyperbolicity - it holds for all lattices in semi-simple Lie groups with no Euclidean factors, all the Baumslag-Solitar groups, etc.  Indeed the only examples of groups which are not tame known to the author are groups with infinite virtual center.    It would certainly be of interest to know whether this reflects reality - it would show that finite virtual center is a quasi-isometry invariant and might allow to classify nilpotent groups up to quasi-isometry using induction and the central series.

\begin{question} Is tameness for finitely generated (or finitely presented) groups equivalent to finite virtual center?
\end{question}

\begin{remark}We will often use tameness in the following form:  $F$ is tame if for any $(K,C)$ there is an $R$ so that any two $(K,C)$ quasi-isometries which are equivalent are at distance at most $R$. This is essentially immediate from the definition : if $f$ and $g$ are $(K,C)$-quasi-isometries which are equivalent then $f^{-1}g$ is a $(K^2,KC+C)$ quasi-isometry which is at bounded distance from the identity.  Thus, by tameness, it is within $r_0$ of the identity.   Then $d(f(x),g(x)) \leq Kd(x,f^{-1}g(x)) + C \leq Kr_0+C$.  
\end{remark}

\begin{lemma}\label{cohtamespace} If $F$ is tame then for all uniformly simply connected $B$, the map $H:Ext(B,F) \to {H^1}_\infty(B,\QI(F))$ is an isomorphism.
\end{lemma}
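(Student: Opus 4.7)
The plan is to prove injectivity and surjectivity of $H$ separately, with tameness playing the same key role in each: converting ``equal in $\QI(F)$'' (i.e., at finite distance as self-maps of $F$) into the uniformly bounded distance required to assemble fiberwise data into an actual quasi-isometry. The obstruction highlighted earlier by the non-trivial $\Z$-bundle built from the $\phi_n$ is precisely the failure of this passage when the fiber is not tame, so some hypothesis of this form is clearly necessary.

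For injectivity, suppose $p:X\to B$ and $p':X'\to B$ are coarse bundles with $H(p)=H(p')$, and fix uniform quasi-isometric identifications of all their fibers with $F$. The cohomology equation furnishes a $0$-cochain $G:B\to \QI(F)$, with values in a uniform family, satisfying $\phi_{b,b'} = G(b')\phi'_{b,b'}G(b)\inv$ in $\QI(F)$ for every edge of a sufficiently thick Rips complex of $B$. Use $G(b)$ fiberwise to define a map $g:X\to X'$ over $B$. Coarse Lipschitzness along fibers is automatic, and between nearby fibers over $b$ and $b'$ it reduces to the claim that the two compositions $G(b')\phi'_{b,b'}$ and $\phi_{b,b'}G(b)$, from the fiber of $X$ over $b$ to the fiber of $X'$ over $b'$, are at uniformly bounded distance. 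The cohomology relation only gives equality in $\QI(F)$, i.e., finite distance. But both sides are $(K,C)$-quasi-isometries of $F$ for $(K,C)$ depending only on $d(b,b')$, so the remark following the definition of tameness upgrades ``finite distance'' to ``distance at most $R(K,C)$''. Running the same argument with $G\inv$ produces a coarse inverse over $B$.

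For surjectivity, given a uniform closed cochain $\phi$ on the $r$-Rips complex $R_r(B)$, choose $r$ large enough that $R_r(B)$ is simply connected (possible by uniform simple connectedness of $B$), and build the candidate total space as already suggested in the excerpt: take $F\times B$ with the maximal metric in which the fiber copies of $F$ are isometrically embedded and $d((f,b),(\phi_{b,b'}(f),b'))\leq d(b,b')$ for every edge $(b,b')$ of $R_r(B)$. The projection to $B$ is a coarse fibration by construction; the content is that its fibers are uniformly properly embedded copies of $F$, which is what makes it a coarse bundle with fiber $F$. Two points $(f,b)$ and $(f',b)$ in the same fiber are $n$-close in this metric only if $f' = \psi(f)$ for some composition $\psi$ of the $\phi_{b_i,b_{i+1}}$ along a $B$-loop of length at most $n$. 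Filling such a loop in $R_r(B)$ by $2$-cells and using closedness of $\phi$ shows that $\psi$ is the identity in $\QI(F)$; since $\psi$ is a $(K,C)$-quasi-isometry of $F$ with $(K,C)$ depending only on $n$ and $r$, tameness bounds $d_F(f,f')$ in terms of $n$ and $r$ alone. This is exactly the uniform properness needed for the construction to be a coarse bundle, and unwinding the definitions shows its cohomology class is $\phi$.

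The main obstacle in both halves is the same and is precisely what tameness addresses: closedness and cohomologousness are relations in $\QI(F)$, equivalence classes of quasi-isometries up to bounded distance, whereas the bundle structure requires uniform metric bounds depending only on the scale in $B$. Once tameness converts one into the other, everything else is the straightforward fiberwise assembly already sketched in the discussion leading up to the lemma.
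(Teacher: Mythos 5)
Your proposal is correct and follows essentially the same route as the paper: surjectivity by choosing uniform representatives of the cocycle and using tameness to show that compositions along paths (equivalently, around loops filled by $2$-cells) are determined up to a bounded error by their endpoints, so the gluing data assembles into a bundle; injectivity by using the cohomologousness $0$-cochain fiberwise and tameness to upgrade equality in $\QI(F)$ to a uniform bound. Your version is somewhat more explicit about why the resulting fibration is a coarse bundle (uniform properness of the fibers) and about which remark supplies the uniform bound, but the underlying argument is the paper's.
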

 
\begin{proof}

Choose $r>0$ so that $B$ is an $r$-path space.  Let $c$ be an $L^\infty$ $1$-cocycle with coefficients in $\QI(F)$ defined on $\Delta_r(B)$.  Let $(K,C)$ be such that the values of $c$ on $\Delta_r(B)$ take values in $(K,C)$-quasi-isometries.  For any $x$ and $y$ in $B$ with $d(x,y) \leq r$, choose any $(K,C)$-quasi-isometry in the equivalence class of $c(x,y)$ and call this map $\phi_{x,y}$.   We will use these maps to build a coarse bundle as before.  We only need to check that for any $r$-path $b_1, b_2, \cdots b_n$ that the composition  $\psi=\phi_{b_1,b_2} \phi_{b_2,b_3} \cdots \phi_{b_{n-1},b_n}$ depends only on the endpoints, up to an error bounded solely in terms of $n$.   The fact that $c$ is a cocycle says that the equivalence class of  $\psi$ depends only on the endpoints.  Since all the $\phi$  are $(K,C)$-quasi-isometries, $\psi$ is a  $(K^n, \frac{K^n -1}{K-1} C)$-quasi-isometry.   Thus there is an $R$ such that for paths of length at most $n$,  $\psi$ is determined to within $R$ by the endpoints.   Thus this data defines a bundle $p_c: X_c \to B$ with $H(p_c)=[c]$, and so the map $H$ is surjective.

Likewise, if $p: X \to B$ is any bundle with $H(p)=c$ then the element $f \in C^0$ which exhibits the cohomology between the data for $X$ and the cocyle $c$ assembles to a map $X_c \to X$ intertwining $p$ and $p_c$ which is a $(K,C)$ quasi-isometry along fibers and the identity over $B$.    

\end{proof}

\subsection{Structures on bundles} \label{section:structures}
 
Unfortunately, many of the bundles we care about do not have tame fibers.   In particular, all of the bundles coming from actions with Abelian stabilizers via lemma \ref{lemma:action} fail to have tame fibers since $\Z^n$ is not tame.   However, in these examples, we know we can choose the identifications of the fibers with $\Z^n$ so that the connecting maps $\phi_{b,b'}$ are all elements of the abstract commensurator of $\Z^n$, which is $\glnq$.   This extra structure can make the situation easier to analyze.

\begin{defn} A {\bf structure set} on $F$ is a subset $\Sigma \subset QI(F)$ together with a collection of subsets $\U$ of subsets of $\Sigma$ called {\bf uniform subsets}, closed under subsets, satisfying:

\begin{itemize}

\item For every uniform set $U$ there is $(K,C)$ such that $U$ consists of $(K,C)$-quasi-isometries.

\item For every uniform set $U$ there is an $r>0$ and a uniform set $V$ such for any $f$ and $g$ in $U$ there is an $h$ in $V$ with $d(fg,h) \leq r$.

\end{itemize}
 
\end{defn}
 
The full collection of quasi-isometries of $F$ with the usual meaning of uniform is certainly a structure set.   Another example is $\glnr$ as a structure set for $\Z^n$, where uniform means precompact in the usual topology.    A structure set is not quite a group, since the approximate product defined in the second condition need not be unique.   If $\Sigma$ has a group structure such that in the second condition $h$ can be taken to be the product of $f$ and $g$ then we call $\Sigma$ a structure group.   Note that we do not assume that the uniform structure on $\Sigma$ is induced from $QI(F)$, for example the group $Aff(\R^n)$ has two natural notions of uniformity: one which comes from uniformity of quasi-isometry constants acting on $\R^n$ and one in which the uniform sets are the pre-compact sets. 

\begin{defn}  Let $\Sigma \subset QI(F)$ be a structure set.   Given a coarse bundle $p:X \to B$ with fiber $F$, a {\bf $\Sigma$ structure} on $p$ is a choice of $(K,C)$-quasi-isometric identifications of the fibers of $p$ with $F$ (for some $(K,C)$) and for all edges $ e \in B$ an element $f_e$ of $\Sigma$ which is at (uniformly) bounded distance from $\phi_{b,b'}$ such that the collection of compositions of these $f_e$ around the boundaries of $2$-cells of $B$ is a uniform subset of $\Sigma$.
\end{defn}

Essentially, an $(F,\Sigma)$ bundle is a coarse bundle where the connecting maps are chosen from $\Sigma$.   

If $\G$ is a finitely generated group, $A$ a finitely generated normal subgroup, then we have seen that by choosing appropriate identifications of the fibers with $A$ we can arrange for the closest point projection maps to be of the form $ a \mapsto x^{-1} a x b$ for $x \in \G$ and $b \in A$.  Thus these bundles naturally have a $Aut(A) \semi A$ structure.  This can be be reduced to an $Aut(A)$ structure if $\G$ is a semi-direct product.

\begin{lemma}\label{actionstructure} Let $\G$ be a finitely generated group acting with finite quotient on a locally finite simplicial complex $X$.   There is a subgroup $A$ of $\G$ such that for every $\sigma$ a simplex of $X$, $G^\sigma$ is commensurable to $A$.   If $A$ is finitely generated then for any $x_0 \in X$ the orbit map $\G \to X$ sending $\g \mapsto \g x_0$ is a coarse bundle with fiber $A$.  Further, this bundle has a natural $\tilde{Comm}(A)$ structure.
\end{lemma}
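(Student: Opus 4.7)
The plan is to handle the three assertions in sequence. For the existence of $A$: since $X$ is locally finite and $\G$ acts with finite quotient, there are finitely many $\G$-orbits of simplices. For a face relation $\sigma\subset\tau$, one has $G^\tau\leq G^\sigma$, and $G^\sigma$ permutes the finite set of simplices of $X$ containing $\sigma$, so $[G^\sigma:G^\tau]<\infty$. Because the face-incidence graph on simplices of $X$ is connected, iterating along a chain shows that all simplex stabilizers are pairwise commensurable, and any one of them may be taken as $A$.

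For the coarse-bundle structure, lemma \ref{lemma:action} applies verbatim: $\G$ acts isometrically with bounded quotient on $X$, which has coarsely bounded geometry, so the orbit map $p:\g\mapsto\g x_0$ is a coarse fibration. To identify the fiber with $A$ up to uniform quasi-isometry, I would observe that the fiber over $x\in X$ is (up to the bounded Hausdorff slop inherent in the definition of coarse fibers) a left coset of $G^{x_0}$ when $x$ lies in the orbit $\G x_0$, and more generally a controlled neighborhood built from a coset of some nearby stabilizer $G^\sigma$. Each such $G^\sigma$ is commensurable to $A$ and hence finitely generated, so all the fibers are uniformly coarsely connected; combined with cocompactness and the left-invariance of the $\G$-metric, this gives uniform quasi-isometric identifications of the fibers with $A$, exactly as in the discussion of coset foliations preceding the lemma.

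For the $\tilde{Comm}(A)$ structure, I would fix once and for all a commensuration between $G^{x_0}$ and $A$, and identify each fiber $\g G^{x_0}$ with $A$ by left-translating by $\g\inv$ and then applying that identification. Following the group-extensions example of the previous section, the closest-point projection from $\g_1 A$ to $\g_2 A$ is then represented by the map $a\mapsto (h\inv a h)\cdot\delta$, where $h=\g_1\inv\g_2$ and $\delta\in A$ has bounded $\G$-norm. When $A$ is not normal, conjugation by $h$ is defined only on the finite-index subgroup $A\cap h\inv A h$ and lands in $hAh\inv\cap A$, so it represents an element of $Comm(A)$; paired with right translation by $\delta\in A$, this assembles to an element of $\tilde{Comm}(A)=Comm(A)\semi A$. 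Because there are only finitely many $\G$-orbits of edges in $X$, only finitely many conjugacy classes of $h$ arise, so the collection of cocycle values sits in a uniform subset of $\tilde{Comm}(A)$.

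The main obstacle is the bookkeeping around stabilizers that are merely commensurable rather than equal: one must choose identifications $G^\sigma\to A$ coherently across a fundamental domain, and then verify that the cocycle condition in $\tilde{Comm}(A)$ holds uniformly — namely, that the composition of the connecting maps around the boundary of any $2$-cell of $X$ lands in a bounded subset of $\tilde{Comm}(A)$. Finiteness of the fundamental domain keeps all ancillary coset-representative choices bounded, after which the verification is formal.
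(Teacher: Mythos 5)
The paper states this lemma without a separate proof; its justification is implicit in the preceding discussion (Lemma \ref{lemma:action} for the coarse fibration, the coset-foliation remarks for uniform coarse connectivity of fibers once $A$ is finitely generated, and the group-extension example for the form $a \mapsto h\inv a h \cdot \delta$ of the connecting maps). Your write-up assembles exactly those ingredients in the intended way --- commensurability of stabilizers via local finiteness and finitely many orbits, fibers as coarse left cosets of $G^{x_0}$, and partial conjugation composed with right translation as the $\tilde{Comm}(A)$-valued cocycle --- so it is correct and follows the paper's (implicit) approach; the only blemish is a harmless direction mismatch between the formula $a \mapsto (h\inv a h)\delta$ and the stated domain $A \cap h\inv A h$ of the partial conjugation.
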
 

Here $\tilde{Comm}(A)$ is the collection of maps of $A$, defined on cosets of finite index subgroups which are equivariant with respect to some abstract commensurator.

Now, one can define $Ext_\Sigma(B,F)$ as before and try to relate this to some ${H^1}_\infty$ type invariant.    As before one must first pass the a quotient which is a structure group rather than just a set, as follows:

\begin{defn} Let $\Sigma$ be a structure set.  Let $\bar{\Sigma}$ be the image of $\Sigma$ in $\QI(F)$.   Call a subset $U$ of $\bar{\Sigma}$ is uniform if and only if there is a uniform subset $U'$ of $\Sigma$ containing a representative of each equivalence class in $U$.  
\end{defn}

\begin{defn} A structure set is {\bf tame} if for every uniform $U$ there is an $r>0$ so that the elements of $U$ which are trivial in $\bar{\Sigma}$ are within $r$ of the identity.  
\end{defn}
 
 As before the key point here is that for $\Sigma$ tame, $U \subset \bar{\Sigma}$ uniform, any two uniform sets of representatives for $U$ in $\Sigma$ are at uniformly bounded distance.  Exactly as before, we have:

\begin{lemma} There is a map $H:Ext(B,(F,\Sigma)) \to {H^1}_\infty(B,\bar{\Sigma})$.  If $\Sigma$ is tame then this map is a bijection.
\end{lemma}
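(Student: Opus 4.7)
The plan is to mimic the proof of Lemma \ref{cohtamespace} almost verbatim, with the only subtlety being that everything must stay inside the structure set $\Sigma$ (not merely inside $\QI(F)$), and the role of tameness of $F$ is replaced by tameness of $\Sigma$.

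First, I define $H$. Given a bundle $p: X \to B$ equipped with a $\Sigma$-structure $\{f_e\}$, I set $H(p)$ to be the class of the $1$-cochain $e \mapsto [f_e] \in \bar{\Sigma}$. The definition of structure places the $f_e$ on edges of bounded length in a uniform subset of $\Sigma$, so by definition of uniformity in $\bar{\Sigma}$ this cochain lies in $C^1_\infty(B,\bar{\Sigma})$. The cocycle condition follows because, for any $2$-cell with boundary edges $e_1,\dots,e_k$, the composition $f_{e_1}\cdots f_{e_k}$ represents a self-map of a single fiber that is at bounded distance in $X$ from the composition of closest-point projections around the loop; that composition moves points a uniformly bounded distance, so its class in $\QI(F)$ is trivial, hence its class in $\bar{\Sigma}$ is trivial. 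Well-definedness on equivalence classes of $\Sigma$-bundles follows as in Lemma \ref{cohtamespace}: an equivalence $g: X \to X'$ over $B$ yields, via the chosen identifications, a $0$-cochain in $\bar{\Sigma}$ exhibiting the cohomology of the two cocycles; uniformity holds because $g$ is a single quasi-isometry and the identifications are uniform.

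Now assume $\Sigma$ is tame, and fix $r$ so that $B$ is an $r$-path space. For surjectivity, let $[c] \in H^1_\infty(B,\bar{\Sigma})$ with representative $c$ on $\Delta_r(B)$. By the definition of uniformity in $\bar{\Sigma}$, on each edge $e$ I can choose a lift $f_e \in \Sigma$ so that the collection $\{f_e\}$ is a uniform subset of $\Sigma$. Closure of uniformity under the approximate multiplication in the structure-set axioms then produces a uniform subset $V$ containing (within bounded distance) the compositions $f_{e_1}\cdots f_{e_k}$ around every $2$-cell. Since $c$ is a cocycle, each such composition is trivial in $\bar{\Sigma}$, so by tameness all these compositions lie within $r_0$ of the identity in $F$. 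Using these $f_e$ as the gluing data exactly as in the proof of Lemma \ref{cohtamespace}, I build a total space $X_c$ and projection $p_c : X_c \to B$; the bound from tameness is precisely what ensures the required uniform compatibility around loops, so $p_c$ is a genuine coarse bundle, and it comes equipped with a tautological $\Sigma$-structure whose associated cocycle is $c$.

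For injectivity, suppose $p:X\to B$ and $p':X'\to B$ are $(F,\Sigma)$-bundles with $H(p)=H(p')$. Then the associated cocycles $[f_e]$ and $[f'_e]$ in $C^1_\infty(B,\bar{\Sigma})$ are cohomologous via some $h \in C^0(B,\bar{\Sigma})$. By the representative-lifting property for tame structure sets (the remark following the definition of tameness), I can lift $h$ to a vertex-indexed family $\{g_b\} \subset \Sigma$ which is a uniform subset. The map assembled from the $g_b$ gives a fiberwise quasi-isometry $X \to X'$; the cohomology relation, combined with tameness, guarantees that $g_{\tau e}^{-1} f'_e g_{\iota e}$ and $f_e$ are within uniformly bounded distance in $F$, so the assembled map extends to a genuine quasi-isometry over $B$. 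Thus $p$ and $p'$ are equivalent in $Ext(B,(F,\Sigma))$.

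The main obstacle is bookkeeping rather than substance: one must repeatedly check that every cochain, every lift, and every composition produced remains in a uniform subset of $\Sigma$ (not just of $\QI(F)$). The approximate-multiplication axiom and tameness together exactly supply what is needed: the former keeps compositions uniform, the latter makes the non-uniqueness of lifts from $\bar{\Sigma}$ to $\Sigma$ harmless up to uniformly bounded error in $F$.
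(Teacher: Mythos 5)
Your proposal is correct and follows exactly the route the paper intends: the paper states this lemma with only the remark ``Exactly as before, we have:'', deferring to the proof of Lemma \ref{cohtamespace}, and your argument is precisely that proof transported to the structure-set setting, with tameness of $\Sigma$ (via the observation that uniform sets of representatives of a uniform subset of $\bar{\Sigma}$ are at uniformly bounded distance) playing the role that tameness of $F$ played there. The bookkeeping you flag --- keeping lifts and compositions in uniform subsets of $\Sigma$ rather than merely of $\QI(F)$ --- is exactly the content the paper leaves implicit.
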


\subsection{Holonomy}\label{section:holonomy}

The classification of group extensions with fiber $A$, when $A$ has no center, is usually phrased in terms of a map to $Out(A)$ rather than a $1$-dimensional cohomology class.   From this point of view the extensions are simply pull-backs of the universal extension:
 $$1 \to A \to Aut(A) \to Out(A) \to 1$$

It is useful to give a similar rephrasing of the above classification result for bundles with tame structure sets.   

\begin{lemmadef} Let $p:X \to B$ be an $(F, \Sigma)$ bundle.  If $f : C \to B$ is a coarse lipschitz map then there is a canonical pull-back $f^*p:f^* X \to C$ which is an $(F, \Sigma)$ bundle.
\end{lemmadef}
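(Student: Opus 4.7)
The plan is to transport the gluing cocycle of $p$ from $B$ to $C$ along $f$ and then reassemble a coarse bundle from the pulled-back data, mirroring the reconstruction used in Lemma \ref{cohtamespace}. Concretely, fix $r>0$ so that $B$ is an $r$-path space and the $\Sigma$-structure on $p$ provides elements $\phi_e\in\Sigma$ for every edge $e$ of $\Delta_r(B)$. Choose $(L,D)$ coarse Lipschitz constants for $f$, then pick $r'>0$ and an integer $N$ so that whenever $d_C(c,c')\le r'$ the points $f(c)$ and $f(c')$ are joined by an $N$-edge path in $\Delta_r(B)$.

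For each edge $(c,c')$ of $\Delta_{r'}(C)$, pick once and for all such a path $f(c)=b_0,b_1,\dots,b_m=f(c')$ with $m\le N$ and apply the approximate-product axiom of a structure set iteratively to produce a single $\psi_{c,c'}\in\Sigma$ at bounded distance from the product $\phi_{b_{m-1},b_m}\cdots\phi_{b_0,b_1}$. Since $m\le N$ throughout, all the $\psi_{c,c'}$ stay inside one uniform subset of $\Sigma$. For a $2$-cell $(c_1,c_2,c_3)$ of $\Delta_{r'}(C)$, the three chosen paths concatenate to a loop in $\Delta_r(B)$ of length at most $3N$, and the cocycle condition for the $\Sigma$-structure on $p$ along this loop translates, again up to bounded distance, into the cocycle condition for $\psi$ on the triangle.

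With a $\Sigma$-valued $1$-cocycle on $\Delta_{r'}(C)$ in hand, I would reconstruct $f^*X$ exactly as in the proof of Lemma \ref{cohtamespace}: form $F\times C$ with the maximal metric in which each vertical copy of $F$ is isometrically embedded and $(x,c)$ is within $d_C(c,c')$ of $(\psi_{c,c'}(x),c')$. The resulting projection $f^*p:f^*X\to C$ is then a coarse bundle with fiber $F$ carrying a tautological $\Sigma$-structure. Canonicity follows because any two choices of $r$, $r'$, auxiliary paths in $B$, or structure-set representatives differ by a coboundary coming from a uniform $0$-cochain on $C$; the assembly procedure of Lemma \ref{cohtamespace} then exhibits an explicit equivalence over $C$ between the corresponding total spaces.

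The main obstacle is not the geometric construction but the uniformity bookkeeping. Structure sets are not literally groups, so each application of the approximate-product axiom can enlarge the ambient uniform subset, and the cocycle condition on $p$ only gives boundedly many compositions at a time. One must arrange the argument so that the uniform set containing the $\psi_{c,c'}$, and the uniform set containing the $2$-cell compositions, depend only on the input data $(p,f,r,r',N)$ rather than on the individual edges or triangles processed. Once this uniformity is in place, pullback of structures becomes a purely combinatorial operation on cocycles, and the lemma is essentially a bookkeeping repackaging of the reconstruction used for Lemma \ref{cohtamespace}.
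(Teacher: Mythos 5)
Your route is genuinely different from the paper's. The paper simply takes the coarse fiber product: $f^*X$ is the set of pairs $(x,c)\in X\times C$ with $d(f(c),p(x))\le r$ (for $r$ large enough that $p$ is $r$-onto), with $f^*p$ the projection to $C$; the $(F,\Sigma)$ structure and the fiberwise isomorphism to $X$ covering $f$ are then read off directly, and canonicity is automatic because no choices are made. You instead pull back the gluing cocycle along $f$ and reassemble the total space via the $F\times C$ maximal-metric construction. Both work, and your version makes the induced class in ${H^1}_\infty(C,\bar{\Sigma})$ completely explicit (it is literally $f^*$ of the class on $B$), which is pleasant; the cost is all the path choices, the approximate-product bookkeeping you acknowledge, and a separate argument that the outcome is independent of those choices, none of which the fiber product requires.

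One point in your write-up needs to be stated more carefully. You justify the uniformity of the compositions $\psi_{c_{n-1},c_n}\cdots\psi_{c_1,c_2}$ by appeal to ``the cocycle condition for the $\Sigma$-structure on $p$.'' The cocycle condition only controls equivalence classes in $\QI(F)$, and converting ``equivalent'' into ``at uniformly bounded distance'' is exactly where Lemma \ref{cohtamespace} uses tameness --- which is not assumed here. What rescues your argument is that you are not reconstructing from an abstract cocycle: the $\phi_{b,b'}$ all lie at uniformly bounded distance from genuine closest point projections between fibers of $X$, and those compose coherently with error controlled by the length of the path. That is the fact you should cite for the uniformity of the $\psi$'s and of their products around $2$-cells of $\Delta_{r'}(C)$; with it in place (and with the resulting connecting maps uniformly close to the projections of the honest fiber product), your construction produces the same bundle up to equivalence over $C$.
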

  
Simply define $f^*X$ to be the subset of $X \times C$ of pairs $(x,c)$ with $f(c)=p(x)$ (more precisely, fix $r$ large enough so that $p$ is $r$-onto and look at pairs with $d(f(c),p(x)) \leq r$).   The map $f^*p : f^*X \to C$ is simply projection onto the second coordinate.   It is easy to verify that this defines an $(F,\Sigma)$ bundle and projection to the $X$ coordinate gives a fiberwise isomorphism covering $f$.

For any $(F,\Sigma)$ bundle over $B$ we can "integrate" the cohomology class we constructed above to get a map, called the {\bf holonomy} from $B$ to $\bar{\Sigma}$.  Fix a base point $b_0 \in B$ and define $\h:B \to \bar{\Sigma}$ by defining $\h(b)$ as the equivalence class of the closest point projection from $b$ to $b_0$.   For any $b$ and $b'$ in $B$ we have $\h(b') = \h(b) \pi(b,b')$, where $\pi(b,b')$ is the equivalence class in $\bar{\Sigma}$ of the projection from the fiber over $b'$ to the fiber over $b$.    By definition this is uniform in $d(b,b')$.   Thus, although $\h$ is not uniform, it is "lipschitz" in the obvious sense.  This is just another way of saying that $d \h = H(p)$ is a uniform $1$-cochain.    We can make this Lipschitz in the standard sense as follows:

\begin{defn}
For $U$ a uniform subset of $\bar{\Sigma}$ define $\hat{\Sigma}_U$ as the subgroup of $\bar{\Sigma}$ generated by $U$ equipped with the word metric from this generating set.  
\end{defn}

The holonomy is clearly a Lipschitz map to $\hat{\Sigma}_U$ for $U$ large enough.  As with the ${H^1}_\infty$ invariant, one should really work with an invariant $\hat{\Sigma}$ which is a limit over the net of uniform sets in $\Sigma$.       This is messy and there are not any examples I know of where this is useful as in all the interesting cases where one can say anything this space stabilizes.

\begin{defn} A structure group is {\bf stable} if for all $U \subset V$ uniform sets, the map $\hat{\Sigma}_U$ to $\hat{\Sigma}_V$ is uniformly proper.    The structure group is {\bf strongly stable} if there is $U$ uniform such that for all $V$ uniform with $U \subset V$  the map $\hat{\Sigma}_U$ to $\hat{\Sigma}_V$ is a coarse equivalence.    
\end{defn}

Stability makes sure that the notion of Lipschitz maps to $\hat{Sigma}_U$ (and of bounded distance between maps) does not change if $U$ is elarged. Strong stability allows us to interpret these maps as Lipschitz maps into a metric space.  We will come to some examples shortly, but first we record the upshot of the discussion:

\begin{lemma}\label{holonomy}  Assume $\Sigma$ is stable, then for any $(F,\Sigma)$ bundle over $B$ the holonomy is a lipschitz uniform map $\h: B \to \hat{\Sigma}$ which is well defined up to bounded distance and translation.   If $\Sigma$ is tame then there is a universal $(F, \Sigma)$ bundle $E\Sigma$ over $\hat{\Sigma}$ such that any $(F,\Sigma)$ bundle $p:X \to B$ is equivalent to $\h(p)^* E\Sigma$.   
\end{lemma}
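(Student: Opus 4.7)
The plan is to prove both assertions by carefully unpacking the holonomy construction given just before the lemma and then recognizing a distinguished $(F,\Sigma)$ bundle over $\hat{\Sigma}$ whose holonomy is (essentially) the identity.

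First I would establish the Lipschitz and well-definedness statements. Fix a basepoint $b_0\in B$ and set $\h(b)$ to be the class in $\bar{\Sigma}$ of the closest-point projection between the fibers over $b$ and $b_0$, as in the discussion preceding the lemma. Fix $r>0$ such that $B$ is an $r$-path space. The lemma-definition on closest-point projections provides, for each such $r$, a uniform subset $U_r \subset \bar{\Sigma}$ containing every class $\pi(b,b')$ with $d(b,b')\leq r$. The cocycle identity $\h(b')=\h(b)\pi(b,b')$ then shows that an $r$-path $b=b_1,\dots,b_n=b'$ expresses $\h(b)^{-1}\h(b')$ as a word of length $n$ in the generators $U_r$, so $\h$ is Lipschitz as a map into $\hat{\Sigma}_{U_r}$. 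Stability of $\Sigma$ ensures that enlarging $U_r$ to any $V\supset U_r$ replaces $\hat{\Sigma}_{U_r}$ by a uniformly properly equivalent space, so both the Lipschitz condition and the notion of bounded distance are intrinsic to $\hat{\Sigma}$. Changing the basepoint from $b_0$ to $b_0'$ replaces $\h$ by $\h(b_0')\cdot\h$, which is left translation; changing the identifications of the fibers with $F$ or the $\Sigma$-representatives of the $\phi_{b,b'}$ alters each $\h(b)$ by an element lying in a single uniform subset of $\bar{\Sigma}$, so these changes move $\h$ a bounded distance in $\hat{\Sigma}_V$ for some large uniform $V$.

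Next I would construct the universal bundle $E\Sigma\to\hat{\Sigma}$ under the tameness assumption. Choose a uniform $U\subset\bar{\Sigma}$ large enough to realize $\hat{\Sigma}=\hat{\Sigma}_U$, and view $\hat{\Sigma}$ as its Cayley graph on generating set $U$. Define a $1$-cochain $c$ on this graph by assigning to the oriented edge $(g,gu)$ (with $u\in U$) the element $u\in\bar{\Sigma}$ itself. Compositions of these labels around any loop in $\hat{\Sigma}$ represent the trivial element of $\bar{\Sigma}$, so $c$ is a uniform $1$-cocycle with values in $\bar{\Sigma}$. By the structure-set version of Lemma \ref{cohtamespace} stated just before the current lemma — this is where tameness is used — this cocycle is realized by an $(F,\Sigma)$ bundle, which I call $E\Sigma$. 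By construction, its holonomy based at the identity coset is, up to bounded distance, the identity map $\hat{\Sigma}\to\hat{\Sigma}$.

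Finally, given any $(F,\Sigma)$ bundle $p:X\to B$ with holonomy $\h$, I would identify $X$ with $\h^* E\Sigma$ over $B$. The pulled-back cocycle $\h^*c$ assigns to an edge $(b,b')$ of $B$ the element $\h(b)^{-1}\h(b')\in\bar{\Sigma}$, which, by the identity $\h(b')=\h(b)\pi(b,b')$, is exactly $\pi(b,b')$ — the cocycle representing $p$. By the bijectivity part of the preceding tame lemma, two $(F,\Sigma)$ bundles over the same base with cohomologous cocycles are equivalent over the base, hence $X$ is equivalent over $B$ to $\h^* E\Sigma$. The main obstacle I anticipate is the bookkeeping around the uniform sets: one must check that stability really does let the Lipschitz and bounded-distance statements be formulated independently of a choice of $U$, and that the cocycles being compared both live in a common $\hat{\Sigma}_V$ where the equivalence of Lemma \ref{cohtamespace} can be applied honestly.
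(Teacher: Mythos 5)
The paper states this lemma only as ``the upshot of the discussion'' and supplies no separate proof, so your write-up is essentially a correct filling-in of the intended argument: the Lipschitz/well-definedness claims follow from the identity $\h(b')=\h(b)\pi(b,b')$ together with stability, and the universal bundle is obtained by realizing the tautological edge-labelling cocycle on the Cayley graph of $\hat{\Sigma}_U$ and matching pulled-back cocycles via the tame classification. One small caveat: Lemma \ref{cohtamespace} is stated for uniformly simply connected bases, and the Cayley graph of $\hat{\Sigma}_U$ need not be one; this is harmless here because your cocycle $c$ multiplies to the identity in $\bar{\Sigma}$ around \emph{every} loop (not just triangles), so tameness bounds the composition of chosen $\Sigma$-representatives around any loop of length $n$ directly and the bundle construction goes through without the simple-connectivity hypothesis -- but you should say this rather than cite the lemma as stated.
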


{\bf Examples:} \label{examples:structures}

\begin{itemize}

\item {\bf Euclidan fiber}   The main use we make of this is for Euclidean fibers.   There one has several natural structure groups : translations, linear maps, affine maps, etc.     The linear structure group is tame - no non-trivial linear map is at bounded distance from the identity.    The others are not tame, although in these cases the subgroup $\Sigma_0$ is Abelian, which allows one to make some progress - see section \ref{sec:notame}.    All these groups are strongly stable, with $\hat{\Sigma}$ being the corresponding Lie group with compactly generated word metric.

\item {\bf  Rigid spaces}  We say $F$ is rigid if there is some $(K,C)$ so that all quasi-isometries of $F$ are near $(K,C)$ quasi-isometries.    Examples include higher rank symmetric space and Euclidean buildings (\cite{KL},\cite{EF}), some hyperbolic buildings (\cite{hyperbolicbuilding}, and many other spaces.   In this case the entire set of quasi-isometries is uniform, so $\hat{\Sigma}$ is essentially point.   All bundles with such fibers are quasi-isometric to products (which one can easily prove directly).

\item {\bf Non-uniform lattices}  A more interesting example is a non-uniform lattice is a symmetric space (not $\hyp^2$).    Then one knows that all the quasi-isometries are represented by isometries of the ambient symmetric space which are commensurators of the lattice (\cite{Eskin}, \cite{Schwartz:nonuniform}).    The easiest example to think of here is $GL_n(\Z)$ where the group of quasi-isometries is $GL_n(\Q)$.    While these are all isometries of $GL_n(\R)$, their quasi-isometry constants as maps of $GL_n(\Z)$ depend on how far the lattice is moved off of itself, which translates to the sizes of the denomenators of the matrix entries.     Any uniform subset $U$ is contained in $GL_n(\Z[\frac{1}{S}])$ for some $S$.    The corresponding $\hat{\Sigma}_U$ is coarsely just the product of the buildings for all primes in $S$.    Thus this case is stable but not strongly stable.

\end{itemize}

All of these examples are fairly well behaved, and the spaces $\hat{\Sigma}$ (or at least the $\hat{\Sigma}_U$) are coarsely modeled on locally compact spaces.   This can be formulated coarsely, and we will need it later: 

\begin{defn}\label{clc} A structure group $\hat{G}$ is {\bf coarse locally compact} if there is a uniform set $U_0$ so that for any uniform $U$, there are a finite collection of elements $g_1,\cdots,g_n$ with $U \subset g_1 U_0 \cup \cdots \cup g_n U_0$
\end{defn}

Coarse local compactness implies stability.  See \cite{whyte:tame} for a more thorough discussion of the implications of  coarse locally compactness for subgroups of $\QI(F)$.  In general it is unclear when any of the properties hold.

\begin{question}  For which spaces $F$ is $\QI(F)$ stable? coarse locally compact?  When does $\QI(F)$ have a uniform generating set?  
\end{question}

Holonomy provides a basic invariant of bundles - namely the triple $(B, \hat{\Sigma}, H)$.   If two bundles are equivalent then after translation in $\hat{\Sigma}$ there is a quasi-isometry $B_1 \to B_2$ which commutes up to bounded distance  with the holonomy maps.     We call such a map a {\bf quasi-isometry over $\Sigma$}.     If such a quasi-isometry exists then under fairly weak assumptions Lemma \label{holonomy} gives an equivalence of bundles.

It is often convenient to look for such a quasi-isometry in stages: first one checks that the images $H_1(B_1)$ and $H_2(B_2)$ can be translated to lie at bounded distance and second one tries to build a quasi-isometry over this correspondence.    We call the first property having {\bf Hausdorff equivalent holonomy}, and think of this as recording the type of dynamics that occur in the bundle.   The second problem we call the {\bf lifting problem}.    In the next section we discuss one case where it is trivial, and in section \ref{sec:treebase} we solve it when the base is a tree.

\subsection{Proper holonomy}\label{section:proper}

For a given bundle $X$ with tame fiber, the image of the holonomy map is naturally the smallest possible structure group.   One important case is what we call {\bf proper holonomy}, meaning that the holonomy map is a proper embedding of $B$ in $\hat{\Sigma}$.    It follows immediately from lemma \ref{holonomy} that this property is a coarse invariant of the bundle and that the image of the holonomy up to translation and bounded Hausdorff distance is a complete quasi-isometry invariant.

Proper holonomy happens frequently.    If the total space of the bundle is hyperbolic then by Gersten's converse to the theorem of Bestvina and Feign (\cite{gersten2}) the holonomy must be proper.   It also holds for many group extensions:

\begin{claim} If $\G$ is a finitely generated and tame group then $Out(\G) \to \QI(\G)$ is proper.
\end{claim}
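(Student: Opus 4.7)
The plan is to show that for any uniform subset $U \subset \QI(\G)$, its preimage in $Out(\G)$ under the given map is finite, which realizes properness.

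First, fix a finite generating set $S$ for $\G$. An automorphism $\phi \in \mathrm{Aut}(\G)$ fixes $e$, so if $\phi$ is a $(K,C)$-quasi-isometry then $\phi(s) \in B_{K+C}(e)$ for every $s \in S$; the analogous bound holds for $\phi\inv$. Bounded geometry of the Cayley graph makes this ball finite, so there are only finitely many $(K,C)$-quasi-isometric automorphisms of $\G$. Hence the claim would follow if every outer class $[\phi]_{\mathrm{Out}}$ in the preimage of $U$ admits a representative which is $(K',C')$-quasi-isometric with $(K',C')$ depending only on $U$.

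Now take such a $[\phi]_{\mathrm{Out}}$ with representative $\phi \in \mathrm{Aut}(\G)$. By hypothesis $\phi$ is at bounded distance, in the $\QI(\G)$ sense, from some $(K,C)$-quasi-isometry $\psi$ with $(K,C)$ determined by $U$; absorbing a bounded translation into $\psi$ we may assume $\psi(e)=e$. At the level of $\QI(\G)$, inner automorphisms act by conjugation, which (modulo a bounded right-translation) reduces to left-translation. I would use this freedom to choose an inner automorphism $\phi_g$ so that the representative $\phi\phi_g$ of the same outer class agrees with $\psi$ near the basepoint $e$.

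Tameness of $\G$ now enters through the Remark: any two $(K,C)$-quasi-isometries that are at bounded distance are in fact at distance at most $R=R(K,C)$, uniformly in the class. Combined with the basepoint alignment, this forces $d(\phi\phi_g(x),\psi(x)) \leq R$ for all $x$, hence $|\phi\phi_g(s)| \leq K+C+R$ for each $s \in S$. By the first step, $\phi\phi_g$ lies in a finite set, so only finitely many outer classes map into $U$.

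The main obstacle is the inner-automorphism adjustment: producing $g$ so that the modified representative $\phi\phi_g$ is actually a $(K,C)$-quasi-isometry (not merely bounded distance from one), and doing so uniformly across all $[\phi]$ in the preimage of $U$ without circular dependence on the a priori unknown QI constants of $\phi$. This is precisely where tameness of $\G$ is decisive, since it eliminates residual flexibility in the equivalence class once a basepoint alignment has been fixed.
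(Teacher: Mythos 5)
Your overall strategy matches the paper's: first observe that only finitely many automorphisms can be $(K,C)$-quasi-isometries (look where the generators go), then argue that every outer class meeting a uniform subset of $\QI(\G)$ has a representative with controlled constants. But the second step --- which is the entire content of the claim --- is not actually carried out. You invoke the Remark form of tameness to conclude $d(\phi\phi_g(x),\psi(x))\leq R$, but that form of tameness applies only to a pair of maps that are \emph{both} already known to be $(K,C)$-quasi-isometries. The adjusted representative $\phi\phi_g$ is exactly the map whose quasi-isometry constants you do not yet control, so the application is circular. You name this honestly as ``the main obstacle'' and then assert that tameness resolves it, but asserting that the difficulty is where tameness is ``decisive'' is not the same as resolving it. Moreover, the recipe for choosing $g$ (``so that $\phi\phi_g$ agrees with $\psi$ near the basepoint'') is never made precise, and basepoint agreement alone would not suffice even if it were.

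The paper's proof gets around the circularity by never applying tameness to the automorphism at all. Fix $\phi$ and a $(K,C)$-quasi-isometry $f$ at finite distance from it, and consider the family $f_{\g}(x)=\phi(\g)f(\g\inv x)$. Each $f_{\g}$ is $f$ pre- and post-composed with left translations, hence is genuinely a $(K,C)$-quasi-isometry, and each is at finite distance from $\phi$ (since $\phi(x)=\phi(\g)\phi(\g\inv x)$). Tameness, applied to this family of honest $(K,C)$-quasi-isometries, gives a uniform $R=R(K,C)$ with $d(f_\g,f)\leq R$ for all $\g$. Evaluating at $x=\g$ yields $d(\phi(\g)f(id),f(\g))\leq R$, i.e.\ $f$ is within $R$ of $\g\mapsto\phi(\g)f(id)$, which is a left translation (an isometry) composed with the automorphism $x \mapsto f(id)\inv\phi(x)f(id)$. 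Hence that conjugate of $\phi$ is a $(K,C+2R)$-quasi-isometry with constants depending only on $(K,C)$, and your first step finishes the argument. This equivariance-up-to-$R$ trick is the missing idea; without it, or some substitute for it, your outline does not close.
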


\begin{proof}

It is clear there are only finitely many automorphisms with given quasi-isometry constants, simply look where the generators can go.    However the quasi-isometry constants for an automorphism may not be even approximately optimal for its equivalence class.   Indeed, all the inner autormorphisms are at bounded distance from isometries (left transations).   We need to see that this is essentially the only way this happens.

Fix $\phi \in Aut(\G)$ and suppose $f: \G \to \G$ is a $(K,C)$ quasi-isometry at bounded distance from $\phi$ with $(K,C)$ as small as possible.   Consider the collection of maps $ f_{\g}$ defined by $f_{\g}(x) = \phi(\g) f( \g \inv x)$.   These are clearly all $(K,C)$ quasi-isometries as well, and by assumption each is at bounded distance from $\phi$.   By tameness there is therefor an $R$ (depending only on $(K,C)$ and not on $\phi$) so that the $f_{\g}$ are within $R$ of one another.   Therefore, if we conjugate $\phi$ by $f(id)$ we get an automorphism whose constants are nearly optimal in its QI class.   As above, this implies there are only finitely many elements of $Out(\G)$ with $(K,C)$ representatives as claimed.
\end{proof}

Thus any extension in which the action of the quotient on the kernel (provided it is tame) is injective gives a bundle with proper holonomy, and therefore two such bundles are coarsely equivalent if and only if these images are at bounded Hausdorff distance after translation.

It is the "after translation" part of the conclusion that makes this difficult to understand - two subgroups of $Out(\G)$ (or of any discrete group) are at finite Hausdorff distance if and only if they are commensurable (in which case the extensions are commensurable as well).    Thus modulo the translation issue, such extension have a  strong type of rigidity.

The question therefor becomes, given a subgroup $G$ of $Out(\G)$, which quasi-isometries $f$ have the property that  for all $g \in G$, $f \circ g$ is at bounded distance from an element of $Out(\G)$?   Frequently one can show that for $G$ large enough, such and $f$ must be an automorphism.   For $\G$ a closed hyperbolic surface $S$, and $G = Out(\G) (= MCG(S))$ this is a result of Mosher, and is a key ingredient in our proof that $Aut(\G)$ is QI rigid (\cite{MW}).   Note that $Aut(\G)$ is a $\G$ bundle over $Out(\G)$, so the results here are directly relevant.

Another example comes from semi-direct products $\Z^n \by Q$ for $Q$ a subgroup of $\glnz$.    Here the fiber is not tame, but the assumed splitting allows us to reduce the structure group to $\glnr$ which is tame.   When $Q$ is large we can prove rigidty results using these arguments, and in particular theorem \ref{glnzbyzn}.    See section \ref{sec:pattern} for the details.

\section{Euclidean Fibers}\label{sec:notame}

We now turn to a more detailed study of bundles with fiber $\R^n$ (equivalently $\Z^n$) with various types of structure groups.   

For tame structure groups the results of the previous section give some useful classifications, but for non-tame structures things are more complicated.  For group extensions, this corresponds to he fact that extensions by groups with center are more complicated than simply the map to $Out$.  In that case, one needs to look at some $2$-dimensional cohomology classes with coefficients in the center of the fiber.   One can see how to start in that direction in our setting as well.  When $\Sigma$ is not tame, passing from $\Sigma$ to $\bar{\Sigma}$ throws away the information in $\Sigma_0$.   Given an $(F, \Sigma)$ bundle one can try to define a cocycle in ${H^1}_\infty (B, \Sigma)$ rather than ${H^1}_\infty (B, \bar{\Sigma})$.    Starting with closest point projections, one gets a uniform map $c:\Delta_r B \to \Sigma$.  This need not be a cocycle: the product around a loop is bounded distance from the identity, and hence in $\Sigma_0$, but need not vanish.   One would like to view this product as a uniform $2$-cocycle on $B$ with coefficients in $\Sigma_0$ and so get a class in ${H^2}_\infty(B,\Sigma_0)$ which measures the failure.   

Unfortunately this does not make sense.  Even if $\Sigma$ is a structure group so that using it as coefficients is meaningful, there are problems arising from the fact that $\Sigma_0$ need not be Abelian.  As cohomology with non-Abelian coefficients only makes sense for $H^0$ and $H^1$,  it is difficult to make sense of $H^2(B,\Sigma_0)$.  
 
We begin with a discussion of the translational case, $\Sigma=\R^n$ acting on $\R^n$ in the standard way.  Here $\Sigma_0 = \Sigma$, and $\bar{\Sigma}$ is trivial, so the previous invariants give no information.  The basic example of bundles with translational structure group come from central extensions: let 

$$ 1 \to A \to \G \to B \to 1 $$

be a short exact sequence of finitely generated groups, with $A$ in the center of $\G$.   For each $b \in B$ choose $\hat{b} \in \G$ with $\hat{b}A = b$.   This gives isometric identifications of the fibers with $A$, by left translation by the $\hat{b}$.   For any $b$ and $b'$ in $B$, let $\delta$ be a minimal norm element in $b^{-1}b' A$.  Right translation by $\delta$ gives a closest point projection map from the fiber over $b$ to the fiber over $b'$.  Under the given identifications of the fibers with $A$, this becomes a self-map of $A$, given by translation by $\hat{b'}^{-1}\hat{b}\delta$, which is an element of $A$.  Thus a central extension naturally gives rise to a bundle with translational structure group.

Let $X \to B$ be any bundle with fiber $A$ and translational structure group.  For any three points $b_1$, $b_2$, and $b_3$, the composition of projection from $b_1$ to $b_2$, $b_2$ to $b_3$ and $b_3$ to $b_1$ gives a translation of $A$, which we call  $\tau(b_1,b_2,b_3)$.   Since the closest point projections between fibers are well-defined to within a disctance controlled by the distance between the fibers, we know $\tau(b_1,b_2,b_3)$ has size bounded in terms of the diameter of $\{b_1,b_2,b_3\}$.    The following shows that the classification of coarse translational bundles closely parallels the classification of central extensions.

\begin{lemma} \label{translational} The map $\tau$ induces a injection $Ext(B,(A,A)) \to {H^2}_\infty(B,A)$.  The image of $\tau$ is the kernel of the map ${H^2}_\infty(B,A) \to {H^2}_c(B,A)$, in particular,  if $B$ is uniformly simply connected, $\tau$ is a bijection.
\end{lemma}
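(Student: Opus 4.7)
The plan is to verify in turn that $\tau$ defines a uniform $2$-cocycle, that its class is an invariant of the coarse bundle, that bundles yielding cohomologous cocycles are equivalent over $B$, and that a uniform cocycle arises from a bundle precisely when it becomes a coboundary after the uniformity constraint is dropped.

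For the cocycle property, fix isometric trivializations of the fibers with $A$ so that each closest-point projection $P_{ij}$ from the fiber over $b_i$ to that over $b_j$ becomes translation by an element of $A$ whose norm is controlled by $d(b_i, b_j)$. Since $A$ is abelian and $P_{ji}$ is at bounded distance from $-P_{ij}$, we may write $\tau(b_1,b_2,b_3) = P_{12} + P_{23} - P_{13}$ up to a bounded term. The tetrahedral alternating sum $d\tau(b_1,\ldots,b_4)$ then telescopes to a uniformly bounded translation, and this bounded error is itself a uniform coboundary, so $\tau$ represents a well-defined class in ${H^2}_\infty(B, A)$.

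For well-definedness and injectivity, any two isometric trivializations of a fiber differ by a translation $\sigma(b) \in A$, under which $\tau$ shifts by $d\sigma$, and the bounded ambiguity in the choice of each projection is absorbed similarly. Thus $[\tau]$ depends only on the bundle. If two translational bundles $X$, $X'$ satisfy $\tau - \tau' = d\sigma$ for some uniform $\sigma \in C^1_\infty(B,A)$, then modifying the trivializations of the fibers of $X'$ by $\sigma$ brings its closest-point projections to within uniform distance of those of $X$; the fiberwise identity maps then assemble to a quasi-isometry $X \to X'$ over $B$, exactly in the spirit of Lemma~\ref{cohtamespace}.

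For the image, given a uniform $2$-cocycle $c$, the gluing construction from the earlier discussion assembles a candidate total space $X_c$ from $A \times B$ using translations prescribed by $c$. The cocycle condition forces path-liftings to close up around boundaries of $2$-cells, but the fibers of $X_c$ are \emph{uniformly properly embedded} only when path-liftings close up around arbitrary loops in $B$ --- equivalently, when there is a (not necessarily uniform) $1$-cochain $\sigma$ with $d\sigma = c$, i.e., when $[c]$ vanishes in ${H^2}_c(B, A)$. Conversely, any bundle provides such a trivialization by choosing arbitrary coset representatives, so the image of $\tau$ equals the kernel of the forgetful map ${H^2}_\infty \to {H^2}_c$; when $B$ is uniformly simply connected this kernel is all of ${H^2}_\infty$, giving bijectivity. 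The main obstacle is exactly this image identification --- matching the coarse-geometric condition of proper fiber embedding with the algebraic vanishing of the obstruction class in ${H^2}_c$; the remaining steps are bookkeeping closely parallel to Lemma~\ref{cohtamespace}.
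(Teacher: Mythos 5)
Your overall strategy matches the paper's: write $\tau = d\phi$ for the coarse $1$-cochain $\phi$ of closest-point projections, so that membership in the kernel of ${H^2}_\infty(B,A)\to{H^2}_c(B,A)$ is immediate, and build bundles from coarse $1$-cochains whose coboundary is $L^\infty$. But your injectivity step as written has a real gap coming from a confusion of cochain degrees. You propose to ``modify the trivializations of the fibers of $X'$ by $\sigma$'' where $\sigma$ is the $1$-cochain with $\tau-\tau'=d\sigma$; trivializations of fibers are indexed by points of $B$ and can only be modified by a $0$-cochain (and doing so changes $\phi'$ by a coboundary, hence leaves $\tau'=d\phi'$ untouched). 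The missing step is the integration: from $\tau-\tau'=d(\phi-\phi')=d\sigma$ one concludes that $\phi-\phi'-\sigma$ is a \emph{closed} $1$-cochain, hence equals $df$ for some function $f:B\to A$ because $B$ is coarsely connected; it is this $0$-cochain $f$, not $\sigma$, that defines the fiberwise translations of the candidate equivalence $X\to X'$, and the fact that $(\phi-\phi')-df=\sigma$ is $L^\infty$ is what makes that map commute with closest-point projections up to bounded distance. Without producing $f$ you have no map between the total spaces at all. The same degree slippage appears in your well-definedness sentence (``$\tau$ shifts by $d\sigma$'' for $\sigma$ a $0$-cochain --- in fact $\tau$ does not shift, since $\phi$ shifts by the coboundary $d\sigma$ and $dd\sigma=0$), and you omit the converse direction needed for well-definedness on $Ext(B,(A,A))$: an equivalence $F:X\to X'$ must be read off as a $0$-cochain $f$ with $(\phi-\phi')-df$ being $L^\infty$, whence $[\tau]=[\tau']$.

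On the image your description is also off by one degree: one cannot glue $A\times B$ ``using translations prescribed by $c$,'' since $c$ lives on $2$-simplices and prescribes nothing on edges. The correct mechanism, and the paper's, is that $[c]$ dies in ${H^2}_c(B,A)$ iff $c=d\phi$ for some coarse, not necessarily $L^\infty$, $1$-cochain $\phi$; then $\phi$ itself is the gluing data, the connecting maps are isometries (translations) regardless of the size of $\phi$, and the compositions around loops are controlled because $d\phi=c$ is $L^\infty$ and $B$ is uniformly simply connected. You do arrive at the correct characterization of the image, but the construction must be routed through a primitive $\phi$ of $c$ rather than through $c$ itself.
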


\begin{proof}

We first note that by definition for each $b$ and $b'$ the projection map from the fiber over $b$ to the fiber over $b'$ is a translation, $\phi_{b,b'}$.  Without loss of generality we can assume $\phi_{b',b} = - \phi{b,b'}$.  Thus $\phi$ is a coarse $1$-cochain on $B$ with values in $A$, although it need not be an $L^\infty$ coarse cochain.   We then have $\tau=d \phi$ by definition.  This shows that $\tau$ is a coarse $2$-cocycle on $B$ with values in $A$, and as observed above, it is also $L^\infty$.   Thus there is a cohomology class $[\tau] \in {H^2}_\infty(B,A)$.  As $\tau$ is, by construction $d \phi$, $[\tau] = 0$ in ${H^2}_c(B,A)$.

Surjectivity is immediate: let $\tau$ be any $L^\infty$, $2$-cocycle with $\tau = d \phi$ for some coarse $1$-cochain $\phi$.  Since all translations are isometries, and $d \phi$ is $L^\infty$, we can use $\phi$ to define a translational bundle $X_\phi$ over $B$.   By construction $\tau(X_\phi) = \tau$. 

If $X$ and $X'$ are two coarse $(A,A)$ bundles over $B$ such that, for some choices of respresentatives of their $(A,A)$ structures we have $[\tau(X)] = [\tau(X')]$ then there is a coarse  $L^\infty$ $1$-cochain $\beta$ with $d\beta = \tau - \tau' = d \phi - d \phi ' = d( \phi - \phi')$.  This means that $\beta + c= \phi - \phi' $ for some coarse $1$-cocycle $c$.  As $B$ is coarsely connected, $c = df$ for some function $f: B \to A$.  This $f$ determines a bundle map $F:X \to X'$ which is translation by $f(b)$ from $X_b \to X'_b$.    Since $\phi-\phi' = df + \beta$ and $\beta$ is $L^\infty$, the map $F$ commutes with closest point projections to within bounded distance, and so is an isomorphism of coarse $(A,A)$ bundles.  So the map $\tau:Ext(B,(A,A)) \to {H^2}_\infty(B,A)$ is injective.

Conversely, if $F: X \to X'$ is an isomorphism of $(A,A)$ bundles, we can view $F$ as a map $f: B \to A$ recording the fiberwise translations.  The fact that $F$ commutes with closest point projections to within bounded distance then implies $(\phi - \phi') - df = \beta$ is an $L^\infty$ $1$-cochain.   Then $d \beta = d(\phi - \phi') = \tau - \tau'$, so $[\tau] = [\tau']$ in ${H^2}_\infty(B,A)$.  So the map $\tau:Ext(B,(A,A)) \to {H^2}_\infty(B,A)$ is well-defined.
   
\end{proof}

Thus the classification of coarse translational bundles closely parallels that of central extensions, with periodicity replaced by boundedness.   However, we are frequently interested in coarse equivalence of bundles, not only equivalences that respect the translational structures.   Similarly for more general bundles with other structure groups.   Surprisingly the two are often equivalent:

\subsection{Structural Rigidity}

\begin{defn} Let $\Sigma$ be a structure set on $F$.  We say that $\Sigma$ is {\bf rigid} if any two $(F,\Sigma)$ bundles which are equivalent as coarse fibrations are equiavlent as $(F,\Sigma)$ bundles.  If $\Sigma'$ is a structure set with $\Sigma \subset \Sigma'$ we say that the pair $(\Sigma,\Sigma')$ is {\bf rigid} if any two $(F,\Sigma)$ bundles which are equivalent as coarse fibrations are equivalent as $(F,\Sigma')$ bundles. 
\end{defn}

\begin{lemma}\label{diff1} The structure group $(\R^n, \glnr)$ is rigid.
\end{lemma}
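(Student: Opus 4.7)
Since $\glnr$ is a tame structure set on $\R^n$, the preceding classification lemma for tame structure sets identifies $\mathrm{Ext}(B,(\R^n,\glnr))$ with $H^1_\infty(B,\glnr)$. The rigidity statement therefore reduces to proving that two $(\R^n,\glnr)$-bundles that are equivalent as coarse fibrations represent the same class in $H^1_\infty(B,\glnr)$.

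Let $F: X_1 \to X_2$ be a coarse fibration equivalence between $(\R^n,\glnr)$-bundles with structure identifications $\sigma_i$ and cocycles $A^{(i)} \in \glnr$. Conjugating the fiberwise restrictions $F_b$ by the identifications produces a uniform family $\widetilde F_b: \R^n \to \R^n$ in $\QI(\R^n)$ satisfying
$$\widetilde F_{b'} \cdot A^{(1)}_{b,b'} \approx A^{(2)}_{b,b'} \cdot \widetilde F_b$$
with uniformly bounded error in the QI metric. This shows directly that the two $\glnr$-valued cocycles become cohomologous in the larger group $H^1_\infty(B,\QI(\R^n))$, with $\widetilde F$ playing the role of the $0$-cochain.

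The crux of the argument is to descend this cohomology equivalence from $\QI(\R^n)$ to $\glnr$. I would do this by constructing a $\glnr$-valued $0$-cochain $L: B^{(0)} \to \glnr$ directly, by fixing a basepoint $b_0$, setting $L_{b_0} = \text{Id}$, and propagating via $L_{b_i} = A^{(1)}_{b_{i-1},b_i} L_{b_{i-1}} (A^{(2)}_{b_{i-1},b_i})^{-1}$ along coarse paths. Path-independence follows from the cocycle conditions for $A^{(1)}$ and $A^{(2)}$ together with the uniform simple connectedness of $B$: the possible discrepancies between two choices of path lie in a uniform subset of $\QI(\R^n)$ and, using the witness $\widetilde F$, are close to the identity; by tameness of $\glnr$ they are exactly the identity in $\glnr$. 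A direct cocycle manipulation then verifies $L_{b'}^{-1} A^{(1)}_{b,b'} L_b = A^{(2)}_{b,b'}$ (again exactly, by tameness), exhibiting $[A^{(1)}]=[A^{(2)}]$ in $H^1_\infty(B,\glnr)$, so the classification gives a $\glnr$-bundle equivalence.

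The main obstacle is the uniformity along long paths: a priori, the propagated value $L_b$ could fail to lie in a uniform subset of $\QI(\R^n)$, because conjugating by an accumulating product of $A^{(1)}$-maps can blow up QI constants. The use of the fiberwise QIs $\widetilde F$ coming from the coarse fibration equivalence is essential here—they serve as a witness, lying in a uniform family, that keeps the $L_b$ controlled in the QI metric; tameness then upgrades "bounded QI distance" to "equality in $\glnr$", which is what makes the lift well-defined.
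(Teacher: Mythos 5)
Your reduction of the lemma to showing that the two $\glnr$-valued cocycles are cohomologous via a \emph{uniform} $\glnr$-valued $0$-cochain is a correct reformulation (given the tame classification lemma and the fact that $\glnr\to\QI(\R^n)$ is injective), and your propagation formula for $L$ together with the exact path-independence argument is fine. But the step you yourself flag as the crux --- keeping $L_b$ in a uniform subset of $\glnr$ --- is not closed by the argument you sketch, and essentially all of the content of the lemma lives there. The witness $\widetilde{F}_b$ cannot ``keep $L_b$ controlled'' for two reasons. First, to compare $L_b$ with $\widetilde{F}_b$ you would need each $\widetilde{F}_b$ to lie at bounded distance from \emph{some} linear map, but an individual quasi-isometry of $\R^n$ (for $n\geq 2$; e.g.\ $(x,y)\mapsto(x+|y|,y)$) need not be within bounded distance of any element of $\glnr$. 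Tameness only says that two linear maps at bounded distance are equal; it does not produce a linear map near a given nonlinear quasi-isometry, which is exactly the descent you need. Second, the intertwining relation $\widetilde{F}_{b'}A^{(1)}_{b,b'}\approx A^{(2)}_{b,b'}\widetilde{F}_b$ holds with bounded error only edge by edge; iterating along a path of length $k$ post-composes the errors with the remaining linear holonomies, which can amplify them by their operator norms, so there is no a priori bound on the distance from $L_b$ to $\widetilde{F}_b$ independent of $d(b_0,b)$. The case $B=\Z$ with constant cocycles $M$ and $M'$ already exhibits the problem: there $L_i=M^i(M')^{-i}$, and the boundedness of this sequence is (essentially) the Farb--Mosher classification of abelian-by-cyclic groups, which is not a formal consequence of the existence of the $\widetilde{F}_i$.

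The paper closes this gap analytically rather than cohomologically: normalize $f$ to preserve fiberwise origins, rescale fiberwise by $f_t(e)=\frac{1}{t}f(te)$ (well defined, and still a uniform bundle equivalence, precisely because the gluing maps are linear and hence commute with scalar multiplication), extract a limit along a sequence $t_i\to\infty$ to obtain a bundle equivalence that is fiberwise bilipschitz, and then take fiberwise derivatives at the points of a Lipschitz section where differentiability holds (Rademacher). Linearity of the gluing maps again guarantees that these derivatives assemble into a $\glnr$-bundle equivalence. Some version of this rescaling-and-differentiation input is what your proposal is missing.
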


\begin{proof}

Let $X \to B$ and $X' \to B$ be $(\R^n,\glnr)$ bundles. Assume we have a quasi-isometry $f:X \to X'$ over $B$.  We may assume that $f$ takes the origin in each fiber of $X$ to the origin in the corresponding fiber of $X'$.   Define the maps $f_t(e)=\frac{1}{t} f(te)$, where the scalar multiplication takes place within fibers, and hence is well defined.    Along the fibers, if $f$ is a $(K,C)$ quasi-isometry then $f_t$ is a $(K,\frac{C}{t})$ quasi-isometry, and they all preserve
origins.  Further, since the closest point projections are linear, they commute with scalar multiplication, and hence the map $f_t$ are uniform bundle equivalence $X \to X'$.  There is a sequence $\{t_i\}$ going to infinity for which the maps $f_t$ converge to bundle equivalence  $F:X \to X'$ which is a bilipschitz map on each fiber.

As bilipschitz maps of $\R^n$ are differentiable almost everywhere, we can find a lipschitz section
$s$ of $X$ so that $F$ is fiberwise differetiable at $s(b)$ for all $b \in B$.  Since the gluing maps are linear, and linear maps commute with differentiation, these deriviatives assemble to an equivalence of $X$ and $X'$ as $(\R^n,\glnr)$ bundles.

\end{proof}

Most groups do not have $\glnr$ structures naturally, as it implies a canonical choice of basepoint in each fiber.   However, this is the only obstruction in reducing an affine structure to a linear one.

\begin{lemma}\label{lemma:section}
The structure group pair $((\R^n,\glnr), (\R^n, Aff(\R^n)))$ is rigid.
\end{lemma}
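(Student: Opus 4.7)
The plan is to reduce to Lemma~\ref{diff1} by using a coarse Lipschitz section to convert the affine structures into linear ones. Suppose $X \to B$ and $X' \to B$ are $(\R^n, Aff(\R^n))$ bundles that are equivalent as coarse fibrations via some $f : X \to X'$ over $B$. First I would construct a coarse Lipschitz section $s : B \to X$: fix a basepoint $b_0 \in B$ and an arbitrary $s(b_0)$ in its fiber, then extend to any other $b$ by lifting a chosen path from $b_0$ to $b$ via the coarse path lifting property. The convexity of the Euclidean fibers lets one smooth out the dependence on the choice of path, producing an $s$ whose separation $d_X(s(b),s(b'))$ is controlled linearly by $d_B(b,b')$. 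Setting $s' := f \circ s$ gives an automatically coarse Lipschitz section of $X'$.

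Next I would re-identify each fiber of $X$ (and $X'$) with $\R^n$ by translating so that $s(b)$ (and $s'(b)$) sits at the origin. Under these new identifications, an affine closest-point projection between fibers of $X$ over $b$ and $b'$, originally of the form $x \mapsto Ax + c$, has translational part $c$ bounded in terms of $d_B(b,b')$, because $s$ is Lipschitz and the projection sends the old basepoint over $b$ within bounded distance of the new basepoint over $b'$. Thus the new gluing data lies at bounded distance from $\glnr$, which (since $\glnr$ is tame) endows both $X$ and $X'$ with well-defined $(\R^n, \glnr)$ structures. By construction $f$ sends the basepoint of each fiber of $X$ within bounded distance of the basepoint of the corresponding fiber of $X'$, so $f$ is already a coarse fibration equivalence of $(\R^n, \glnr)$ bundles. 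Lemma~\ref{diff1} then delivers the required bundle equivalence.

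The main obstacle will be the construction of the coarse Lipschitz section. Each individual affine gluing map has bounded translational part, but compositions along long paths can a priori accumulate unbounded drift, so naively lifting a single path from $b_0$ will not produce a Lipschitz section in general. The resolution should exploit the uniform simple connectivity of $B$: any two lifts of paths from $b_0$ to $b$ bound a combinatorial disk in $B$ of controlled area, and the corresponding affine holonomies around that disk translate the lifted endpoint by an amount bounded by the area. Averaging the endpoints of such lifts within the Euclidean fiber then yields a single choice $s(b)$ with the desired Lipschitz control. This is the coarse analogue of the classical fact that an affine bundle over a simply connected base admits a section, and it justifies the heuristic (flagged in the paragraph preceding the lemma) that a choice of basepoints is the only obstruction to reducing affine structures to linear ones.
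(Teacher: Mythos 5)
Your second and third paragraphs --- re-basepointing the fibers along a Lipschitz section so that the gluing maps become linear up to uniformly bounded translational error, pushing the section forward by $f$, and then invoking Lemma~\ref{diff1} --- are sound, and the observation that $s':=f\circ s$ is automatically a Lipschitz section of $X'$ is exactly how the paper transports the structure: the paper's proof consists of noting that a $\glnr$ structure is the same thing as a Lipschitz section (the zero section), that the resulting reduction is canonical because the linear part of an affine map is well defined and any two Lipschitz sections differ by a Lipschitz family of fiberwise translations, and that the existence of such a section is a coarse invariant. The gap is in the step you yourself flag as the main obstacle: constructing a Lipschitz section of an arbitrary affine bundle. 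No such section exists in general, even over a uniformly simply connected base. The integral Heisenberg group, viewed as a translational bundle over $\Z^2$ with fiber the center, is a counterexample: a Lipschitz section would make the gluing translations uniformly bounded after re-basepointing, i.e.\ would exhibit the classifying cocycle $\tau$ as $d\phi$ for an $L^\infty$ $1$-cochain $\phi$, forcing $[\tau]=0$ in ${H^2}_\infty(\Z^2,\R)$; but here $\tau$ is the area cocycle, $|\tau(l)|$ grows quadratically in $|l|$, so by Theorem~\ref{vanish} its class is nonzero (equivalently, by Lemma~\ref{translational} the bundle would be trivial, whereas the Heisenberg group is not quasi-isometric to $\Z^3$).

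Concretely, two things go wrong with your proposed construction. First, in a uniformly simply connected base a loop of length $\ell$ bounds a disk of finite but not linearly controlled area (for $\Z^2$ it is quadratic in $\ell$), so ``drift bounded by the area'' does not yield a Lipschitz bound on the section. Second, even the claim that the drift is bounded by the area fails once the linear parts are nontrivial: transporting a discrepancy $v$ across one more edge replaces it by $Av$ for the linear part $A$ of that gluing map, so the discrepancy between two lifts of paths of length $\ell$ can grow like $K^{\ell}$ rather than like the number of $2$-cells separating them, and averaging endpoints over an unbounded set does not repair this. The fix is not to construct the section but to assume it: the bundles in the statement are $(\R^n,\glnr)$ bundles, so each already carries a zero section, and your argument from that point on (push the section forward by $f$, observe the translational parts of the gluing maps are bounded, apply Lemma~\ref{diff1}) is correct. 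As the paper emphasizes in the sentence preceding the lemma, the existence of a Lipschitz section is precisely the obstruction to reducing an affine structure to a linear one; it is an honest hypothesis, not something uniform simple connectivity provides for free.
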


\begin{proof}

If $X$ has a $\glnr$ structure then the "zero section" gives a lipschitz section of the bundle.    Conversely, any  lipschitz section gives a reduction of the affine structure group to $\glnr$.    Any two Lipschitz sections differ by a lipschitz family of fiberwise translations and hence define equivalent $\glnr$ structures.  Thus if an affine bundle has a $\glnr$ reduction, it is canonical (this is really just the observation that the linear part of an affine map is well defined), and such a reduction exists if and only if there is a Lipschitz section.   Since the existence of such a section is a coarse invariant of the bundle the result follows.

\end{proof}

For translational structures things are not quite a simple, however one has:

\begin{lemma}\label{central} The structure group pair $((\R^n,\R^n), (\R^n, Aff(\R^n)))$ is rigid.
\end{lemma}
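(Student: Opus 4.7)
The strategy is to adapt the proof of Lemma~\ref{diff1} to the translational setting, with the absence of a canonical origin in each fiber handled by an initial choice of coarse Lipschitz sections. In view of Lemma~\ref{translational}, it suffices to produce an element $L\in\glnr$ so that $[L\tau(X)]=[\tau(X')]$ in ${H^2}_\infty(B,\R^n)$: applying $L$ fiberwise gives an affine equivalence $X\to LX$ (where $LX$ is the translational bundle with cocycle $L\phi$), and translational equivalence of $LX$ with $X'$ then completes the proof. Given a coarse equivalence $f:X\to X'$ over $B$, I would first choose a coarse Lipschitz section $o$ of $X$ and use $o$ and $f\circ o$ to identify the fibers of $X$ and $X'$ with $\R^n$, so that $f_b:\R^n\to\R^n$ fixes the origin and the translational cocycles $\phi,\phi'$ satisfy the bundle compatibility
\[f_{b'}(v+\phi(b,b'))=f_b(v)+\phi'(b,b')+O(1)\]
uniformly in $v,b,b'$.

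I would then rescale by setting $f_{b,t}(v)=\tfrac{1}{t}f_b(tv)$ and pass along a subsequence $t_i\to\infty$ to a limit $F$ using Arzel\`a--Ascoli; the compatibility relation together with $|\phi(b,b')|/t,|\phi'(b,b')|/t\to 0$ (that is, the translational cocycle is scaled away in the limit) forces $F$ to be fiber-independent and bilipschitz. By Rademacher $F$ is differentiable at some point $v_0$, and I would set $L:=dF(v_0)\in\glnr$. The key step is then to verify that this $L$ satisfies $L\phi-\phi'=dt+O(1)$ as coarse $1$-cochains for some function $t:B\to\R^n$. For this I would evaluate the bundle compatibility at the scaled point $v=t_iv_0$ and use the first-order expansion $F(v_0+h)=F(v_0)+Lh+o(|h|)$ together with uniform equicontinuity of $\{f_{b,t_i}\}$ to approximate $f_b(t_iv_0+w)-f_b(t_iv_0)$ by $Lw$ for $w$ of bounded norm; the $b$-dependent translational discrepancies absorb into $t_b:=\lim_i\bigl(f_b(t_iv_0)-t_iF(v_0)\bigr)$.

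The main obstacle I expect is precisely this last step: the convergence $f_{b,t}\to F$ is only uniform on compacts for each fixed $b$, so controlling the error in $f_b(t_iv_0+w)-f_b(t_iv_0)-Lw$ uniformly in $(b,b')$ is delicate. A diagonal argument over a countable dense set of pairs $(b,b')$, together with careful bookkeeping of the convergence errors, should ensure that these errors absorb cleanly into the definition of $t_b$ rather than contaminating the $O(1)$ bound on $L\phi-\phi'-dt$. Once established, the cocycle identity combined with Lemma~\ref{translational} produces a translational equivalence $LX\to X'$, which composed with the fiberwise $L$ yields the desired affine equivalence $X\to X'$.
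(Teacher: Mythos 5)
Your first half --- blowing up $f$ fiberwise, extracting a bilipschitz limit $F$, and differentiating to produce a linear part $L$ --- is exactly the analytic mechanism of the paper's argument (it appears there as an unlabeled sub-lemma inside the proof). The gap is in your second half, where you try to conclude $L\phi-\phi'=dt+O(1)$ directly from differentiability of $F$ at $v_0$. Two things go wrong. First, $t_b:=\lim_i\bigl(f_b(t_iv_0)-t_iF(v_0)\bigr)$ need not exist: $f_b(t_iv_0)-t_iF(v_0)=t_i\bigl(f_{b,t_i}(v_0)-F(v_0)\bigr)$ is $o(t_i)$ but can diverge, and passing to further subsequences does not cure divergence. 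Second, and more fundamentally, differentiability of the blow-up limit at $v_0$ only controls increments of $f_b$ at scales comparable to $t_i$; it gives no information about $f_{b'}(t_iv_0+w)-f_{b'}(t_iv_0)-Lw$ for $w$ of bounded norm beyond the trivial bound coming from the quasi-isometry constants, and the uniform-on-compacts convergence errors, once multiplied back by $t_i$, are $o(t_i)$ rather than $O(1)$. No diagonal argument fixes this, because the statement you need is not local at $v_0$.

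What the blow-up genuinely yields is control of $L$ on \emph{large} vectors: for a loop $l$ based at $b$ the loop holonomies satisfy $d(f_b(v+\tau_l),f_b(v)+\tau'_l)\le C|l|$, and since $\tau_l$ can be comparable to the blow-up scale, differentiation gives $|L\tau_l-\tau'_l|\le C|l|$. That is where the paper stops the analytic part. The missing ingredient in your proposal is the purely cohomological step converting ``the periods of $L\phi-\phi'$ over loops grow at most linearly in $|l|$'' into ``$[L\tau]-[\tau']=0$ in ${H^2}_\infty(B,\R^n)$'': this is Theorem \ref{vanish}, proved by the explicit construction $f(b)=\sup_p\{a(p)-2C|p|\}$ over paths $p$ from the basepoint to $b$ (a generalization of Gersten's argument for hyperbolic $B$). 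Without that theorem or a substitute for it, the passage from the derivative $L$ to an equivalence of translational bundles via Lemma \ref{translational} is unjustified.
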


\begin{remark}
This should be compared to the result of Gersten \cite{gersten} which says that for any $G$, a central extension defined by a bounded cocyle is quasi-isometric (preserving fibers) to the trivial
extension.  Our result is both a generalization and a converse.   To see a the relationship, one must observe that for any group the map from the bounded cohomology of $G$ to the $L^\infty$ cohomology is trivial.  It may be true that the kernel of the map $H^2(G,Z) \to {H^2}_\infty(EG,Z)$ is precisely the image of bounded cohomology.  There are two cases where this is all understood - if 
$G$ is amenable then averaging shows both that the bounded cohomology vanishes, and that the 
map $H^2(G,Z) \to {H^2}_\infty(EG,Z)$ is injective.  At the other extreme, when $G$ is Gromov hyperbolic, the map from bounded cohomology to group cohomology is surjective (\cite{Gromov:boundedcohomology}) and the $L^\infty$-cohomology vanishes (\cite{gersten2}).
\end{remark}

\begin{proof}

Suppose $E$ and $E'$ are translational bundles over $B$ and that $f:E \to E'$ is a fiber preserving quasi-isometry.   We need to show $E$ and $E'$ are affinely equivalent.   Choose origins arbitrarily for the fibers of $E$, and choose their images under $f$ as origins for the fibers of $E'$.

Let $l$ be a loop in $B$ based at $b$.   Summing the gluing maps around $l$ produces a self-translation, $\tau_l$, of the fiber over $b$.  Likewise there is a translation $\tau'_l$ defined by the gluing maps of $E'$.  Since $f$ is a fiberwise quasi-isometry, it commutes with closest point projections to within bounded distance, and hence these diverge linearly as we go around $l$.  in other words, there is a $C$ (depending only on QI constants of $f$), so that within the fibers over $b$, one has $d(f_b(v+\tau_l),f_b(v)+\tau'_l) \leq C |l|$.  

\begin{lemma} Let $\{\tau_i, \tau'_i, r_i\}$ be a sequence of pairs of vectors in $\R^n$ and real numbers such that  there is a self-quasi-isometry $f$ of $\R^n$ such that for all $v$ and $i$, $d(f(v+\tau_i),f(v)+\tau'_l) \leq  r_i$.  There is a linear map, $T$, of $\R^n$ such that $d(T(v+\tau_i),Tv+\tau'_l) \leq  r_i$, with the distortion of $T$ bounded in terms of the quasi-isometry constants of $f$.
\end{lemma}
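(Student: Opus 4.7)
The strategy is an asymptotic-cone / rescaling argument in the spirit of the proof of Lemma~\ref{diff1}. Define rescaled maps $f_t(v) = (f(tv) - f(0))/t$. If $f$ is a $(K,C)$-quasi-isometry, then $f_t$ is a $(K,C/t)$-quasi-isometry fixing the origin. The family $\{f_t\}$ is therefore uniformly Lipschitz on compacts and pointwise bounded there, so by Arzela--Ascoli some subsequence $t_j \to \infty$ converges uniformly on compacts to a map $F : \R^n \to \R^n$. The lower bilipschitz bound also passes to the limit, so $F$ is $K$-bilipschitz.

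Next I transfer the approximate equivariance to $F$ in a scale-invariant form. Iterating the hypothesis via the telescoping sum
$$f(v+k\tau_i) - f(v) = \sum_{j=0}^{k-1}\bigl(f(v + (j+1)\tau_i) - f(v+j\tau_i)\bigr),$$
each term of which lies within $r_i$ of $\tau'_i$, gives $|f(v+k\tau_i) - f(v) - k\tau'_i| \leq k r_i$ for every positive integer $k$ and every $v$. Rescaling,
$$|f_t(v + k\tau_i/t) - f_t(v) - k\tau'_i/t| \leq k r_i / t.$$
Fix $\lambda > 0$, take $k = \lfloor \lambda t_j \rfloor$, and let $j \to \infty$: since $f_{t_j} \to F$ uniformly on compacts and $k\tau_i/t_j \to \lambda \tau_i$, $k\tau'_i/t_j \to \lambda \tau'_i$, $kr_i/t_j\to \lambda r_i$, the inequality passes to the limit to yield
$$|F(v + \lambda \tau_i) - F(v) - \lambda \tau'_i| \leq \lambda r_i$$
for all $v\in\R^n$, all $\lambda > 0$, and every $i$.

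Finally, $F$ is bilipschitz, so by Rademacher's theorem it is differentiable almost everywhere. Pick any point $v_0$ of differentiability and set $T = dF_{v_0}$; this is a linear isomorphism whose bilipschitz distortion is at most $K$, and hence controlled by the quasi-isometry constants of $f$. Dividing the displayed inequality at $v=v_0$ by $\lambda$ and letting $\lambda \to 0^+$, the left side tends to $|T\tau_i - \tau'_i|$ while the right side is $r_i$. Since $T$ is linear, $T(v+\tau_i) - (Tv + \tau'_i) = T\tau_i - \tau'_i$, so $d(T(v+\tau_i), Tv + \tau'_i) \leq r_i$ for every $v$, as required.

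The only subtle point is the coupling between iteration and rescaling: iterating the $\tau_i$-shift inflates the error by a factor of $k$, but rescaling by $t$ shrinks it by $1/t$. Choosing $k$ and $t$ so that $k/t \to \lambda$ balances the two exactly and converts the coarse, $L^\infty$ hypothesis on $f$ into a genuine (up to a bilinear $\lambda r_i$ term) functional equation for the cone map $F$; differentiation then produces $T$.
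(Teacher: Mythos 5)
Your proof is correct and follows essentially the same route as the paper's: iterate the hypothesis to get $|f(v+k\tau_i)-f(v)-k\tau'_i|\leq kr_i$, rescale by $t$, pass to a bilipschitz limit $F$ satisfying $|F(v+\lambda\tau_i)-F(v)-\lambda\tau'_i|\leq\lambda r_i$, and take $T$ to be the derivative of $F$ at a point of differentiability. Your write-up is somewhat more careful about the normalization and the choice $k=\lfloor\lambda t_j\rfloor$, but the argument is the same.
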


\begin{proof}

Notice that as $d(f(v+\tau_i),f(v)+\tau'_l) \leq  r_i$, $d(f(v+n\tau_i),f(v + (n-1)\tau_i)+\tau'_l) \leq  r_i$ and so $d(f(v+n\tau_i),f(v)+n\tau'_l) \leq  nr_i$.   Now form the rescaling procedure as before: $f_t(v)=\frac{f(tv)}{t}$.  We have $d(f_t(v+c\tau_i),f(v)+c\tau'_l) \leq  cr_i$ for all $c \in \frac{1}{t}\Z$.  Passing to a sequence of $t_i \to \infty$ for which the $f_t$ converge to a bilipschitz homeo $F$ of $\R^n$, we have $d(F(v+c\tau_i),F(v)+c\tau'_l) \leq  cr_i$ for all $c \in \R$.  Since $F$ is bilipschitz, it is differentiable almost everywhere, and hence somewhere.  Applying $d(F(v+c\tau_i),F(v)+c\tau'_l) \leq  cr_i$ for all $c \in \R$ with small $c$ and $v$ a point of differentiability of $F$ shows $|dF_v(\tau_i) - \tau'_i| \leq r_i$.  Thus $dF_v$ is the desired linear map $T$. 

\end{proof}

\begin{remark}
Note that if $\tau_i$ is large compared to $r_i$, this condition says almost exactly what $T$ must do to $\tau_i$.  If there are many $\tau_i$ like this, there will only be a single linear map satisfying the conclusion of the lemma.  The fact that $F$ is almost everywhere differentiable with that map
as its derivative implies $F$ (resp. $f$) is affine (to within bounded distance) with that linear part.
This implies that every fiberwise quasi-isometry is affine.  This is our first glimpse of the phenomenon of pattern rigidity, which we will discuss extensively later.
\end{remark}
\end{proof}

Thus, we can change $E'$ by a linear map and arrange that there is a $C$ so that $|\tau_l - \tau'_l| \leq C |l|$ for all loops $l$.  In terms of the classifiying cohomology classes, this says that $c_E - c_{E'} = db$ where $|\Sigma_{e \in l} b(e)| \leq C |l|$.  This condition on $b$ would clearly hold if $b$ were $L^\infty$.  The converse is almost true:

\begin{theorem}\label{vanish} Let $c$ be an $L^\infty$ $2$-cocycle on $B$.  The class of $c$ vanishes in ${H^2}_\infty(B)$ if and only if there is a $C$ such that for every loop $l$ in $B$ (based at some point $b$), $|c(l)| \leq C|l|$.
\end{theorem}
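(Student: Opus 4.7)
The forward direction is immediate from the definition of the coboundary: if $c=d\beta$ for some $\beta \in C^1_\infty(B,A)$ with $\|\beta\|_\infty \le C_0$, then given a loop $l$ and any $2$-chain $\Sigma$ with $\partial \Sigma = l$ (which exists by the standing uniform simple connectivity of $B$), one has $c(\Sigma) = d\beta(\Sigma) = \beta(\partial\Sigma) = \sum_{e\in l}\beta(e)$, of norm at most $C_0\,|l|$.

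For the harder direction, my plan is to build a bounded primitive $\beta$ for $c$ via a Hahn--Banach argument on the $1$-chain complex. The first step is to observe that the hypothesis forces $c$ to vanish on every $2$-cycle $z$: both $\Sigma$ and $\Sigma + z$ fill the same loop $l = \partial\Sigma$, so applying the hypothesis to $l$ via each filling yields $|c(z)| \le 2C\,|l|$, and taking $\Sigma = 0$ (so $l=0$) gives $c(z)=0$. Consequently the formula $\lambda(l) := c(\Sigma)$, for any filling $\Sigma$ of the $1$-cycle $l$, gives a well-defined linear map from $Z_1(B) = B_1(B)$ to $A$, which by hypothesis satisfies the norm bound $|\lambda(y)| \le C\,\|y\|_{\ell^1}$.

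The second step is to extend $\lambda$ from the subspace $Z_1(B)$ of $1$-chains to all of $C_1(B)$ while preserving this bound. In the translational setting of interest here we have $A = \R^n$, and coordinatewise Hahn--Banach produces a linear functional $\beta : C_1(B) \to A$ with $|\beta(x)| \le C'\,\|x\|_{\ell^1}$. Restricting $\beta$ to single edges shows $\beta \in C^1_\infty(B,A)$ with $\|\beta\|_\infty \le C'$, and by construction $d\beta(\Sigma) = \beta(\partial\Sigma) = \lambda(\partial\Sigma) = c(\Sigma)$ for every $2$-chain $\Sigma$. Thus $c = d\beta$ and $[c] = 0$ in ${H^2}_\infty(B,A)$.

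The main work here is conceptual rather than computational: the linear filling bound is precisely the statement that $c$, viewed as a functional on fillings, descends to a Lipschitz map on boundaries, at which point Hahn--Banach delivers the primitive. The extension step requires divisibility of the coefficients, which is automatic for the $\R^n$-valued cocycles relevant to the translational bundles of this section; for a non-divisible coefficient group one would first tensor with $\R$ and argue that the resulting real-valued primitive can be rounded back without disturbing the $L^\infty$ bound.
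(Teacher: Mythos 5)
Your proof is correct, but it takes a genuinely different route from the paper's. The paper starts from a primitive $a$ with $c=da$ (available since the classes to which the theorem is applied are constructed as coboundaries), reduces to one coordinate, and builds the correcting $0$-cochain explicitly by Gersten's stable-supremum device, $f(b)=\sup\{a(p)-2C|p| : p\ \hbox{a path from } b_0 \hbox{ to } b\}$: the loop hypothesis guarantees the supremum is finite and that $a-df$ is $L^\infty$. This is Gersten's hyperbolic-case argument run essentially verbatim, with the linear filling bound standing in for the linear isoperimetric inequality; it is constructive and works directly with ordered (e.g.\ integral) coefficients. You instead read the hypothesis as saying that $\Sigma\mapsto c(\Sigma)$ descends to an $\ell^1$-bounded functional $\lambda$ on $Z_1=B_1$ and invoke Hahn--Banach to extend it to an $L^\infty$ cochain $\beta$ with $d\beta=c$. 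This is shorter and makes the underlying duality transparent, at the cost of being non-constructive and requiring divisible ($\R$-vector-space) coefficients, which, as you note, is harmless for the translational $\R^n$ bundles at hand. Two small points you elide are worth a sentence each: the hypothesis is stated only for based loops, so to get $|\lambda(y)|\le C\|y\|_{\ell^1}$ on all of $Z_1$ you need the standard decomposition of an integral $1$-cycle into circuits of total length $\|y\|_{\ell^1}$, followed by a density argument (rational cycles are dense in the finite-dimensional space of real cycles with a given finite support) to reach real cycles; and the rounding back to $\Z^n$ is most cleanly done by writing $a-\beta=dg$ for a real $0$-cochain $g$ (using simple connectivity) and replacing $g$ by $\lceil g\rceil$, so that $a-d\lceil g\rceil$ is an integral $L^\infty$ primitive. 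Neither point affects the applications in this section.
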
 

\begin{remark}

Since $c$ is $L^\infty$, $|c(l)|$ is bounded by a constant times the area of a disk with boundary $l$.   The theorem says that $c$ represents the trivial class if and only if $c(l)$ is bounded by a multiple of $l$.  If (and only if) $B$ is Gromov hyperbolic, $l$ must bound a disk of area bounded by a multiple of $|l|$, and hence the $L^\infty$ cohomology vanishes.  That special case of the theorem is a theorem of Gersten \cite{gersten2}, and our proof of the more general result is essentially unchanged from his proof in the hyperbolic case. 

\end{remark}

\begin{proof}

We can reduce to the case of $n=1$ simply by working one coordinate at a
time.  Th only if part of the theorem is trivial, so assume we have a $2$-cocyle
$c=da$ such that for every loop $|\Sigma_l a(e)| \leq C |l|$.  Fix a base point $b_0$ in $B$.

For any $b \in B$, define $f(b) = sup \{ a(p) - 2C |p| : p \hbox{a path from $b_0$ to $b$}\}$.   If we let $p$ be any path $b_0$ to $b$, then for any other path $p'$ with the same endpoints, $p \cup p'$ is a loop at $b_0$, and the assumption that
$|a(p \cup p')| \leq C (|p| + |p'|)$ shows that $a(p') -2C |p'| \leq a(p) + C(|p|-|p'|)$, and hence that the supremum is finite, so $f$ is well defined.  

Let $e=(b,b')$ be an edge of $B$ and let $p$ and $p'$ be paths from $b_0$ to $b$ and $b'$ which achieve the suprema defining $f(b)$ and $f(b')$ within $\varepsilon >0$.   Let $l$ be the loop $p \cup p' \cup e$.   Let $p_1$ be the path to $b$ which is the concatenation of $p'$ and $e$, and likewise let ${p'}_1$ be
$p \cup e$.   

We have $|f(b')-f(b)-a(e)| \leq K$. By definition $a(p_1) - 2C |p_1| \leq a(p) - 2C|p| + \varepsilon$ and $a({p'}_1) - 2C |{p'}_1| \leq a(p') - 2C|p'| + \varepsilon$.     Adding these equations, $a(p_1) + a({p'}_1) - 2C (|p_1| + |{p'}_1|) \leq a(p) + a(p') - 2C(|p| + |p'|) + 2\varepsilon$.  Expanding the left hand side gives $a(p)+a(p') - 2C(|p|+|p'|+2)$

Thus $a+df$ is an $L^\infty$ cocycle, and $d(a+df)=da=c$, so $c$ is trivial in ${H^2}_\infty(B)$.

\end{proof}

\begin{corollary}  Coarse translational $\R^n$ bundles $X$ and $X'$ over $B$ are equivalent as coarse fibrations if and only if $[\tau(X)]$ and $[\tau(X')]$ are in the same $\glnr$ orbit inside ${H^2}_\infty(B,\R^n)$.  In particular $X$ is equivalent to the product $B \times \R^n$  if and only if $\tau(X) = 0$.
\end{corollary}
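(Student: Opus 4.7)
The plan is to combine Lemma~\ref{central} (which lifts any coarse fibration equivalence of translational bundles to an affine equivalence) with Lemma~\ref{translational} (which classifies translational bundles up to $(\R^n,\R^n)$-equivalence by their class in ${H^2}_\infty(B,\R^n)$); the $\glnr$-orbit in the statement is precisely the ambiguity between the ``translational'' and ``affine'' structures on a single bundle.

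For the ``only if'' direction I would start with a coarse fibration equivalence $f:X\to X'$ over $B$ and invoke Lemma~\ref{central} to promote it to an affine bundle equivalence $F$. Each fiberwise restriction of $F$ is an affine map, i.e., a linear map followed by a translation. Tracing through the proof of Lemma~\ref{central}, the linear part can be extracted as a single global $A\in\glnr$ (as the derivative of the rescaling limit at a point of almost-everywhere differentiability), while the translational part is a $1$-cochain $b$ satisfying $|b(l)|\leq C|l|$ on loops and hence, via Theorem~\ref{vanish}, an $L^\infty$ coboundary. Next, I would apply $A$ fiberwise to $X$; the resulting bundle $AX$ is still translational, and a short calculation — linear conjugates of translations are translations by the image vector — gives $\tau(AX)=A\cdot\tau(X)$. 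The affine equivalence between $X$ and $X'$ then becomes a translational equivalence between $AX$ and $X'$ up to an $L^\infty$ coboundary, yielding $[\tau(X')]=A\cdot[\tau(X)]$ in ${H^2}_\infty(B,\R^n)$.

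For the converse direction, given $[\tau(X')]=A\cdot[\tau(X)]$ for some $A\in\glnr$, the same fiberwise-$A$ operation is itself a coarse fibration equivalence $X\to AX$ with $\tau(AX)=A\cdot\tau(X)$, so $[\tau(AX)]=[\tau(X')]$. The injectivity clause of Lemma~\ref{translational} then produces an equivalence $AX\to X'$ as $(\R^n,\R^n)$ bundles, and composing the two yields the desired equivalence $X\to X'$ over $B$. The ``in particular'' assertion is the special case of the orbit $\{0\}$: the product $B\times\R^n$ has $\tau=0$, so equivalence to the product is exactly the condition $[\tau(X)]\in\glnr\cdot 0=\{0\}$. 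The main technical work, namely extracting a globally defined linear part $A$ from the fiberwise affine data, is already handled inside the proof of Lemma~\ref{central}; what remains is just the bookkeeping of how $A$ acts on $\tau$ by post-composition and how translational changes of trivialization alter $\tau$ only by coboundaries.
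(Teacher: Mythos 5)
Your proposal is correct and follows essentially the same route the paper intends: the corollary is stated without a separate proof precisely because it is the assembly of Lemma \ref{central} (coarse fibration equivalence promotes to affine equivalence, with a single global linear part extracted via the rescaling/differentiation sub-lemma), Theorem \ref{vanish} (the "bounded on loops" cochain is an $L^\infty$ coboundary), and the injectivity/surjectivity of Lemma \ref{translational} for the converse. Your bookkeeping of how the linear part acts on $\tau$ and how translational changes only move $\tau$ by coboundaries matches the discussion immediately preceding Theorem \ref{vanish} in the text.
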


For central extensions this proves theorem \ref{thm:central}.

\begin{remark} The identification of those central extensions coarsely equivalent o products was obtained previously in \cite{KleinerCentral}.  This is somewhat easier than the general case as one does not need theorem \ref{vanish}, and instead can argue directly that a translational bundle is trivial if and only if it has a lipschitz section, which is equivalent to the vanishing of $\tau$.
\end{remark}

This gives a reasonably effective classification of those coarse $\R^n$ bundles which can be reduced to have $\glnr$ or translational structure groups.   One would like to cover all affine structure groups as this is what is relevant for simplicial actions.

\begin{question} Is $(\R^n,Aff(\R^n))$ rigid?
\end{question}

This is much like the situation for group extensions of $B$ by $A$ with $A$ Abelian; the coefficients correspond to the map $B \to Out(A)$ and extensions of that type are classified by $H^2(B, A)$ with $A$ viewed as the appropriate $B$-module. We can make some progress on general affine bundles along a similar outline using the previous ideas.  Associated to every affine bundle is a "local coefficient bundle", which is an $(\R^n, \glnr)$ bundle.  The same argument as in  lemma \ref{diff1} shows that if two affine bundles are equivalent as coarse fibrations, then their coefficient bundles are $\glnr$ equivalent.   Fixing a $\glnr$ bundle as the coefficient bundle, we are left with classifying affine bundles with those coefficients.   As affine bundles, this classification is given by a twisted coefficient version of lemma \ref{translational}.   The missing ingredient is the analogue of theorem \ref{vanish} for  twisted coefficients (with boundedness formulated appropriately).   For bundles whose linear part is sufficiently complicated this can be shown using the techniques of the next section, but in that setting stronger results are available.

\subsubsection{Strong Structural Rigidity:Pattern rigidity and Affine Structure Group}\label{sec:pattern}

There is another approach to these rigidity questions which gives stronger results : every equivalence between $X$ and $X'$ as coarse fibrations is an equivalence as $(F, \Sigma)$ bundles.   This holds when the "dynamics" of the holonomy are sufficiently complicated.   In particular, we will see that for affine bundles with sufficiently complicated linear parts, this will hold.  The idea is to define certain foliations of the fibers that are intrinsic to the dynamics and so must be preserved by any quasi-isometries.  If there are enough of these, one can show the quasi-isometries are in $\Sigma$.    This is essentially the method used by Farb and Mosher in \cite{FMabc} and \cite{FMsbf} to prove quasi-isometric rigidity for certain Abelian-by-Cyclic and Surface-by-Free groups.   We will focus on the affine case, although the case of mapping class groups on $\hyp^2$ lurks behind our result with Mosher for QI rigidity for automorphsims of surface groups (\cite{MW}). 

Let $X \to B$ be an $(\R^n, Aff(\R^n))$ bundle.  For any path $p=b_0, b_1, \cdots $ in $B$, a {\bf $K$-straight lift} of $p$ is a $K$-path $x_0, x_1, \cdots $ in $X$ with $x_i$ in the fiber over $b_i$.   Two paths in $X$ covering $p$ are called {\bf parallel} if for some $R$, $d(x_i, x'_i)\leq R$ for all $i$.    If the projection from $X_{b_i}$ to $X_{b_{i+1}}$ is $\phi_i$ then straightness of a lift corresponds to the "drift" of $x_{i+1}$ from $\pi_i (x_i)$.  

For $u,v \in X_{b_0}$ define $D_p (u,v)$ as the minimal $K$ such that there are $K$-straight lifts of $p$ beginning at $u$ and $v$ which are parallel.  

\begin{lemma} There is a semi-norm $||-||_p$ on $\R^n = X_{b_0}$ so that $D_p(u,v)$ is coarsely equivalent to $|| u -v ||_p$. 
\end{lemma}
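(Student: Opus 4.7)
The plan is to reduce the combinatorial quantity $D_p(u,v)$ to a linear optimization problem over $\R^n$ and recognize the result as a semi-norm of $u-v$. In the affine identifications of the fibers I would write the gluing maps as $\phi_i(x) = L_i x + t_i$, with $L_i$ lying in a fixed compact subset of $\glnr$ (uniform since each step is a single edge in $B$). A $K$-straight lift $(x_i)$ then satisfies $x_{i+1} = \phi_i(x_i) + \epsilon_{i+1}$, with the drift $\epsilon_{i+1}$ bounded in the fiber metric by a linear function of $K$ via the uniform proper embedding of fibers into $X$. For two lifts starting at $u, v$, the fiberwise difference $w_i := x_i - x'_i$ obeys the linear recursion $w_{i+1} = L_i w_i + \eta_{i+1}$, with $w_0 = u-v$ and $|\eta_i|$ bounded linearly in $K$; parallelism of the lifts is exactly $\sup_i |w_i| < \infty$. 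Since simultaneously translating both lifts by an affine flow along $p$ leaves both drifts and gaps unchanged, $D_p(u,v)$ depends only on $w := u-v$.

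I would then define
$$\|w\|_p := \inf \bigl\{ K \geq 0 \ :\ \exists\, (w_i)\ \text{with}\ w_0 = w,\ \sup_i |w_i| < \infty,\ |w_{i+1} - L_i w_i| \leq K\ \text{for all}\ i \bigr\}.$$
This quantity is finite for every $w$ (take $w_i = 0$ for $i \geq 1$, which forces only $\eta_1 = -L_0 w$). Nonnegativity is immediate; homogeneity follows from scaling an admissible sequence by $\lambda$, which scales drifts by $\lambda$ because the recursion is linear in $(w_i, \eta_i)$; subadditivity follows from summing admissible sequences for $w$ and $w'$, whose drifts add to give an admissible sequence for $w + w'$ with drift at most the sum. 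Thus $\|\cdot\|_p$ is a genuine semi-norm on $\R^n$.

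For the coarse equivalence with $D_p$, the setup above gives $\|u-v\|_p \leq C \, D_p(u,v) + C$ directly, with $C$ a bundle constant. Conversely, given an admissible sequence $(w_i)$ with drift $\leq K$, I construct a parallel lift pair explicitly: take $x_0 = u$ and let $x_{i+1}$ be any closest-point projection of $x_i$ into $X_{b_{i+1}}$, then set $x'_i := x_i - w_i$ in the fiber over $b_i$. Then $x'_{i+1} - \phi_i(x'_i) = -w_{i+1} + L_i w_i = -\eta_{i+1}$, so $(x'_i)$ has fiber drift $\leq K$; combined with the bounded Hausdorff spacing of adjacent fibers and the uniform quasi-isometric embedding of fibers into $X$, this makes both $(x_i)$ and $(x'_i)$ into $(C' K + C')$-straight lifts, parallel with fiber gap $\sup_i |w_i|$. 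Hence $D_p(u,v) \leq C' \|u-v\|_p + C'$.

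The main obstacle is passing between the intrinsic fiber metric, in which the recursion $w_{i+1} = L_i w_i + \eta_{i+1}$ is clean, and the ambient $X$-metric that defines a $K$-path. Making this translation uniform in the path $p$ (not just pointwise in $i$) requires the uniform proper embedding of fibers from the coarse bundle definition together with the fact that the $L_i$ range over a fixed compact set; together these keep all of the conversion constants independent of $p$ and of the basepoints $u, v$.
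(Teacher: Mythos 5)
Your proof is correct and follows essentially the same route as the paper: normalize away the translational parts so the connecting maps act linearly on differences, express $D_p$ as an infimum of drift over bounded sequences, and read off homogeneity and subadditivity from linearity of the $L_i$ (the paper's $T_i$). The only difference is cosmetic — the paper re-chooses the fiber identifications by translations to make the projections literally linear, while you keep the affine form and let the translations cancel in the difference $w_i = x_i - x_i'$ — and your write-up is somewhat more explicit about the passage between the intrinsic fiber metrics and the ambient metric, which the paper treats as "clearly coarsely equivalent."
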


\begin{proof}

Consider the sequence of vector spaces $X_{b_0}, X_{b_1}, \cdots$.  By changing the identifications of each with $\R^n$ be a translation (in particular, by an isometry), we can arrange for the closest point projection from $V_i = X_{b_i}$ to $V_{i+1} = X_{b_{i+1}}$ to be a linear map, $T_i$.   Then $D_p(u,v)$ for $u,v$ in $V_0$ is clearly coarsely equivalent to:

$$\hbox{inf} \{ C : \exists \{u_i\},\{ v_i\} \in V_i, r \in \R : \forall i d(u_i,v_i) \leq r , d(T_i u_i, u_{i+1}) \leq C, d(T_i v_i, v_{i+1}) \leq C\}$$

which, in turn is coarsely equivalent to $N(u-v)$, where:

$$N(u) = inf \{ C : \exists \{u_i\}  \in V_i, r \in \R : \forall i ||u_i|| \leq r , d(T_i u_i, u_{i+1}) \leq C \}$$

Trivially we have that $N(u) \geq 0$ for all $u$.   Considering the sequence with $u_i =0$ for all $i>0$ we see that $N(u) \leq || T_0 u || < \infty $.  Since the $T_i$ are linear, it is immediate that $N(tu) = t N(u)$ and $N(u+v) \leq N(u) + N(v)$, and so $N$ gives a semi-norm as claimed. 

\end{proof}

For any path $p$ let $\lambda_p$ be the foliation of $X_{b_0}$ by the affine subspaces with $D_p = 0$.   If $X$ and $X'$ are two affine bundles over $B$, any equivalence as coarse fibrations must coarsely respect the functions $D_p$ for all $p$.   In particular, the equivalence must carry the leaves of the foliation $\lambda_p$ to within bounded distance of the leaves of $\lambda'_p$.    For sufficiently large collections of such foliations this forces $f$ to be at bounded distance from an affine map.    It is not clear precisely what "sufficiently large" means here, but certainly containing $d+1$ hyperplanes or $d+1$ lines in general position in $\R^d$ is enough (\cite{MSW2}). 

\begin{corollary}\label{patternrigid}  Let $X$ be an $(\R^n, Aff(\R^n))$ bundle over $B$.   If the collection $\Lambda(X)$ of affine foliations $\lambda_p$ for paths $p$ starting at $b_0$ is sufficiently large, then any equivalence of $X$ to another affine bundle over $B$ as coarse fibrations is an equivalence of affine bundles.
\end{corollary}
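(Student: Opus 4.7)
The plan is to reduce the corollary to the pattern rigidity statement cited from \cite{MSW2} by showing that any equivalence of coarse fibrations, when restricted to a single fiber, must coarsely preserve the whole collection $\Lambda(X)$ of affine foliations. Fix an equivalence $f: X \to X'$ as coarse fibrations. Because $f$ commutes up to bounded distance with the projections to $B$, it restricts (up to uniformly bounded error) to a family of quasi-isometries $f_b: X_b \to X'_b$ with uniform constants. I will focus on $f_{b_0}$ first and then propagate.

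The key observation is that the semi-norm $\|-\|_p$ and thus the foliation $\lambda_p$ are intrinsic to the underlying coarse fibration, not to the choice of affine structure: the definition of $D_p(u,v)$ only refers to $K$-straight lifts and parallelism, both of which are coarse notions expressible in $X$ alone. Hence $f_{b_0}$ must coarsely intertwine $D_p$ with $D'_p$, which forces it to carry each leaf of $\lambda_p$ to within uniformly bounded Hausdorff distance of a leaf of $\lambda'_p$ for every path $p$ starting at $b_0$. Since $\Lambda(X)$ is ``sufficiently large'' in the sense of containing at least $n+1$ hyperplane or line foliations in general position, I may then invoke the pattern rigidity result of \cite{MSW2} to conclude that $f_{b_0}$ is at bounded distance from an affine map $A_{b_0}: X_{b_0} \to X'_{b_0}$, with error depending only on the quasi-isometry constants of $f$ and on the geometry of the foliations.

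The same argument, applied with any other $b \in B$ as the base point, produces an affine approximation $A_b$ of $f_b$ on each fiber, with uniform control on the distance $d(f_b, A_b)$ because the bundle has uniform affine structure group and the quasi-isometry constants of $f_b$ are uniform in $b$. Since the gluing maps $\phi_{b,b'}$ and $\phi'_{b,b'}$ are affine and $f$ intertwines them up to bounded distance, the assembly $\{A_b\}_{b \in B}$ defines a new bundle equivalence $X \to X'$ which is affine on every fiber, i.e.\ an equivalence of $(\R^n, Aff(\R^n))$-bundles at bounded distance from $f$.

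The main obstacle I anticipate is arranging the uniformity in the final step: the pattern rigidity of \cite{MSW2} produces, for each fiber, a bounded-distance affine approximation, but to assemble the $A_b$ into an actual bundle map one needs the bound $d(f_b, A_b)$ to be uniform in $b$ and one needs the linear parts of $A_b$ and $A_{b'}$ to be compatible with the gluing maps up to uniform error. The first requires showing that the constants in the pattern rigidity theorem depend only on the quasi-isometry constants of $f$ and on a uniform measure of general position of $\Lambda(X)$ (which translates uniformly to $\Lambda(X_b)$ via the bundle structure). The second is then automatic from the fact that $f$ commutes with the bundle projections up to bounded distance combined with the uniqueness (up to bounded distance) of the affine approximation on each fiber.
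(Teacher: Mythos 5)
Your proposal is correct and follows essentially the same route as the paper: the paper's (implicit) proof consists exactly of the observations that $D_p$, and hence the foliations $\lambda_p$, are intrinsic to the coarse fibration structure and therefore coarsely preserved by any fiber-preserving quasi-isometry, after which the pattern rigidity result of \cite{MSW2} for sufficiently large collections of affine foliations yields the affine approximation. Your additional attention to uniformity of the affine approximations across fibers is a sensible elaboration of a point the paper leaves implicit, but it does not constitute a different approach.
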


For this to be useful, we need to have examples where $\Lambda(X)$ is sufficiently large.   If $X$ arises as from an action of a group $\G$ on $B$ (as in lemma \ref{lemma:action}) then this reduces to the image of $\G$ in $\glnr$ being dynamically complicated.    One example which is certainly big enough is $\glnz$ acting on $\R^n$ as the powers of any semisimple element give a line (the eigenspace for the larges eigenvalue) and the translates under $\glnz$ give the required collection of $d+1$ lines in general position.   We are almost ready to prove Theorem \ref{glnzbyzn}.  We need:

\begin{lemma}\label{ratnerlite} If $T \in \glnr$ is such that $T \glnz T\inv$ is at finite Hausdorff  distance from $\glnz$ then $T \in PGL_n(\Q)$.
\end{lemma}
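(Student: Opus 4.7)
The plan is to first show the hypothesis forces $T$ into the commensurator of $\glnz$ inside $\glnr$, and then to invoke the classical identification $\mathrm{Comm}_{\glnr}(\glnz) = \glnq \cdot \R^{*} I$, which modulo scalars yields $T \in PGL_n(\Q)$.

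First, I would rewrite the Hausdorff condition as $T\glnz T\inv \subseteq \glnz \cdot K$ for a fixed compact neighborhood $K$ of the identity. Projecting to the finite-volume homogeneous space $X := \glnz\backslash\glnr$, the right $\glnz$-orbit of $\pi(T)$ lies in the compact subset $\pi(K)\cdot T \subset X$. For each unipotent $u \in \glnz$, the restriction to the cyclic subgroup $\langle u \rangle$ gives that $\{\pi(T)u^m : m \in \Z\}$ is bounded in $X$; since $u^t$ varies polynomially in $t$, continuity upgrades this to the entire one-parameter orbit $\{\pi(T)u^t : t \in \R\}$ lying in a compact subset. At this point I would invoke Ratner's orbit-closure theorem (the name ``ratnerlite'' alludes to this, and for small $n$ the older Dani--Margulis horocycle arguments suffice): the orbit closure is a closed homogeneous set $\pi(T) H$ with $H \leq \glnr$ a $\Q$-subgroup containing $\{u^t\}$ and $T\inv \glnz T \cap H$ a cocompact lattice in $H$; in particular $T u^{m_0} T\inv \in \glnz$ for some nonzero $m_0 \in \Z$. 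Running this over the elementary unipotents $\{I + e_{ij}\}_{i \neq j}$, which generate a finite-index subgroup of $\glnz$, yields that $\glnz \cap T\glnz T\inv$ has finite index in both $\glnz$ and $T\glnz T\inv$, so $T$ commensurates $\glnz$.

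Finally, once $T \Gamma(m) T\inv \subseteq \glnz$ for the principal level-$m$ congruence subgroup $\Gamma(m)$, applying this to the unipotents $I + m e_{ij}$ forces $T e_{ij} T\inv \in \frac{1}{m} M_n(\Z)$ for all $i \neq j$; writing the entries as $(T e_{ij} T\inv)_{kl} = T_{ki}(T\inv)_{jl}$ and taking quotients across varying indices forces all pairwise ratios within each column of $T$ (and each column of $T\inv$) to be rational, so $T$ is a scalar multiple of a rational matrix, hence $T \in PGL_n(\Q)$.

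The main obstacle is step two: upgrading the soft boundedness of the unipotent orbit in $X$ to the exact arithmetic containment that some nontrivial power $u^{m_0}$ becomes $T$-conjugate into $\glnz$. Ratner's orbit-closure theorem is the natural input that converts bounded unipotent-orbit data into a homogeneous, hence arithmetically structured, orbit closure; the surrounding packaging with congruence subgroups and elementary matrices is routine.
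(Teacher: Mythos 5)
Your overall architecture (prove $T$ commensurates $\glnz$, then run the classical computation of the commensurator) is coherent, and your final paragraph converting $T(I+me_{ij})T\inv\in\glnz$ into rationality of $T$ up to scalar is the standard argument and is fine. The genuine gap is the middle step. Ratner's orbit--closure theorem applied to the one--parameter orbit $\pi(T)\{u^t\}$ gives a closed connected $H\supseteq\{u^t\}$ with $\pi(T)H$ compact and $\Lambda:=T\inv\glnz T\cap H$ a cocompact lattice in $H$; it does \emph{not} give $\{u^t\}\cap\Lambda\neq\{e\}$, which is exactly what ``$Tu^{m_0}T\inv\in\glnz$ for some $m_0\neq0$'' asserts. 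A cocompact lattice in $H$ need not meet a prescribed one--parameter subgroup: if $H$ is a two--dimensional abelian unipotent group and $\{u^t\}$ is a line of irrational slope with respect to $\Lambda$, then $\pi(T)\{u^t\}$ is dense in the compact torus $\pi(T)H$ --- consistent with everything Ratner tells you --- while $\{u^t\}\cap\Lambda=\{e\}$. Nothing in your setup excludes this, so the passage from bounded unipotent orbits to exact arithmetic containment is unproved. (Two smaller points: $\glnz$ is not a lattice in $\glnr$ because of the determinant direction, so you must first reduce to $\{\det=\pm1\}$; and $H$ is a $\Q$--subgroup only for the $T$--conjugated rational structure, not the standard one.) The repair is to apply the orbit--closure theorem of \cite{shah} to the orbit of the entire unipotent--generated finite--index subgroup of $\glnz$ at once: then $H$ contains that subgroup, Borel density forces $H^0$ to be normal in $SL_n(\R)$ and hence trivial (the full group would give a non--compact orbit), so $H$ is discrete, $\Lambda$ has finite index in it, and commensuration follows. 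That is precisely the proof of lemma \ref{ratner} in this paper.

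Even once repaired, though, your route misses the point of this particular lemma: the remark after lemma \ref{ratner} says that for $\glnz$ one can avoid Ratner entirely, and lemma \ref{ratnerlite} is that elementary substitute (hence the name). The paper's proof instead characterizes rational lines dynamically: the orbit $v\glnz$ fails to accumulate at $0$ exactly when $v$ lies on a rational line (a Euclidean--algorithm argument with elementary matrices); bounded Hausdorff distance between $\glnz$ and $T\glnz T\inv$ makes their orbits on vectors comparable up to bounded multiplicative error, so $T$ must permute the set of rational lines, and a linear map preserving the rational points of projective space lies in $PGL_n(\Q)$. You should either adopt that elementary argument or upgrade to the Shah--Ratner argument for the full unipotent--generated subgroup; the single--one--parameter--subgroup version as written does not close.
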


\begin{proof}

Consider the set of vectors $v$ for the set $v A$ for $A$ in $\glnz$ does not accumulate at zero.   We claim that $v$ is precisely the set of vectors which are on rational lines.   To see this, consider any vector $v=(x_1, \ldots, x_n)$.   If $|x_i|$ is the largest coordinate, and $x_j \neq 0$ then elementary matrices will find another element in the same $\glnz$ orbit with only the $i$-coordinate changed, and with new $i$-th coordinate smaller than $|x_j|$.    In this way we see that the norm can be reduced by fixed factor unless at most one coordinate is non-zero.    The $\glnz$ orbit of such vectors is as claimed.

Since $T \glnz T\inv$ is at bounded Hausdorff distance from $\glnz$, its orbits track those of $\glnz$ - in particular it must preserve the set of rational lines by the characterization in the previous paragraph.   Thus $T$ in $PGL_n(\Q)$.

\end{proof}

\begin{theorem} Let $\G = \Z^n \rtimes GL_n(\Z)$.  The quasi-isometry group of $\G$ is ${H^1}_\infty(GL_n(\Z), \R^n) \rtimes PGl_n(\Q)$.  Any groups quasi-isometric to $\G$ is commensurable to an extension of $\Z^n$ by a finite index subgroup of $GL_n(\Z)$ which acts in the standard way on $\Z^n$
\end{theorem}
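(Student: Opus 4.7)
The plan is to view $\G = \Z^n \rtimes GL_n(\Z)$ as an $(\R^n, Aff(\R^n))$-bundle over $B=GL_n(\Z)$ via Lemmas \ref{lemma:action} and \ref{actionstructure}, where the holonomy is the standard inclusion $GL_n(\Z) \hookrightarrow GL_n(\R)$. The holonomy is proper (it is the identity on a non-uniform lattice), and the image contains abundant semisimple elements. In particular, the eigenline foliations coming from paths that follow powers of semisimple elements, together with their $GL_n(\Z)$-translates, give at least $n+1$ lines in general position in each fiber $\R^n$. So the collection $\Lambda(X)$ is ``sufficiently large'' in the sense of Corollary \ref{patternrigid}, and every self quasi-isometry of $\G$ is an equivalence of $(\R^n, Aff(\R^n))$-bundles over some self quasi-isometry of $B$.

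Next I would identify the base part. By the Eskin/Schwartz theorem (cited just before the theorem), every self quasi-isometry of $B=GL_n(\Z)$ is at bounded distance from left multiplication by an element of the commensurator, and the affine bundle equivalence it lifts to exists only if the induced conjugation preserves the holonomy up to bounded Hausdorff distance. That is, the commensurator element $T$ must satisfy $T\, GL_n(\Z)\, T^{-1}$ at finite Hausdorff distance from $GL_n(\Z)$, and Lemma \ref{ratnerlite} then forces $T \in PGL_n(\Q)$. Every $T\in PGL_n(\Q)$ clearly occurs, by conjugating the defining action. This produces the surjection $QI(\G) \twoheadrightarrow PGL_n(\Q)$.

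Having fixed $T$, the remaining freedom is an affine bundle automorphism whose linear part on each fiber is determined by $T$, so what is left is a fiberwise translation $b \mapsto t(b)\in \R^n$, $b\in GL_n(\Z)$. For this data to assemble to a quasi-isometry, its ``coboundary'' against the $GL_n(\Z)$-action on $\R^n$ must be $L^\infty$; that is, $t$ represents a class in $Z^1_\infty(GL_n(\Z),\R^n)$. Two such $t$ and $t'$ give QIs at bounded distance precisely when $t-t'$ is $L^\infty$, i.e.\ an $L^\infty$-coboundary. Therefore the translational automorphisms contribute exactly ${H^1}_\infty(GL_n(\Z),\R^n)$, and the split given by zero translations yields
$$QI(\G)\;=\;{H^1}_\infty(GL_n(\Z),\R^n)\rtimes PGL_n(\Q).$$

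For the second statement, let $\G'$ be quasi-isometric to $\G$. The QI-action of $\G'$ on $\G$ composed with the first part gives a homomorphism $\rho:\G'\to PGL_n(\Q)$ modulo the translational kernel $K={H^1}_\infty$-part. The kernel of $\rho$ is a subgroup quasi-acting on a single fiber $\R^n$ uniformly, which by tameness of $\R^n$ with the $\glnr$-structure (Lemma \ref{diff1}) acts by bounded perturbations of translations; standard quasi-action straightening then realizes $\ker\rho$ virtually as $\Z^n$. The image of $\rho$ quasi-acts on $B=GL_n(\Z)$ properly and coboundedly by pattern/Eskin rigidity, hence is virtually a finite-index subgroup of $GL_n(\Z)$ acting in the standard way on $\Z^n$. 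The main obstacle is this last sentence: one must verify that the quasi-action of $\mathrm{Im}(\rho)$ lands, up to finite error and passage to a finite-index subgroup, inside honest $GL_n(\Z)$ (rather than merely $PGL_n(\Q)$) and that the induced extension is the standard one; this is where the Eskin commensurator theorem together with Lemma \ref{ratnerlite} and the tameness of the fiber are used in concert.
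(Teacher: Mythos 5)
There is a genuine gap at the very first step. You invoke Corollary \ref{patternrigid} to conclude that \emph{every} self quasi-isometry of $\G$ is an equivalence of $(\R^n, Aff(\R^n))$-bundles, but that corollary only upgrades maps that are \emph{already} equivalences of coarse fibrations (i.e.\ fiber-preserving up to bounded Hausdorff error) to affine bundle equivalences. The foliations $\lambda_p$ are defined via straight lifts of paths in the base, so one cannot even speak of them being preserved until one knows the quasi-isometry respects the fibration by $\Z^n$-cosets. The paper's proof devotes its opening step to exactly this point: it cites the results of \cite{MW}, using that the fibers satisfy coarse Poincar\'e duality and that the base has enough top-dimensional homology classes to separate points (supplied by the maximal nilpotent subgroups and their cosets). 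Without some such argument your chain of implications does not start.

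Separately, your identification of the base part routes through the Eskin/Schwartz commensurator theorem for self quasi-isometries of $GL_n(\Z)$. The paper deliberately avoids this: it derives the constraint directly from the bundle structure --- the linear part of the fiberwise affine map must move the holonomy image $GL_n(\Z)\subset GL_n(\R)$ a bounded Hausdorff distance off itself, and Lemma \ref{ratnerlite} then places it in $PGL_n(\Q)$ --- and explicitly remarks that the argument therefore works for $n=2$, where quasi-isometric rigidity of $GL_2(\Z)$ fails. Your version would only prove the theorem for $n\geq 3$. The remaining portions (the translational kernel as ${H^1}_\infty(GL_n(\Z),\R^n)$, and the quasi-action argument for groups quasi-isometric to $\G$) track the paper's proof closely, and you are right to flag the last sentence of the second statement as the delicate point, though the paper resolves it via uniformity of the quasi-isometry constants and properness of the holonomy rather than via Eskin rigidity.
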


\begin{remark} This almost says that any group quasi-isometric to $\G$ is commensurable to it, but  there is an extension issue we do not know how to resolve - does $H^2(\Lambda, \Z^n)$ vanish for all $\Lambda$ finite index in $GL_n(\Z)$?  Even if it does not one still may have no examples of  such extensions which are quasi-isometric (that would involve vanishing in some appropriate $L^\infty$ sense. 
\end{remark}

\begin{proof}

The group $GL_n(\Q)$ acts on $\G$ as its group of commensurators, and ${H^1}_\infty(SL_n(\Z), \R^n) $ is the group of quasi-isometries of $\G$ which cover the identity on $Sl_n(\Z)$ and are translations along the fibers up to bounded distance.

We first claim that any quasi-isometry of $\G$ is an automorphism of the coarse fibration structure.   The results of \cite{MW} show that this holds for any fibration with fibers satisfying coarse Poincare duality and where the base has enough top dimensional holoology classes to separate point.   In this case the maximal nilponent subgroups and their cosets provide such branching top dimensional submanifolds. 

By theorem \ref{patternrigid} any such map is bounded distance from an affine map on fibers.  This affine map must move the image of the holonomy a bounded distance from itself.  By lemma \ref{ratnerlite} the linear part is therefor in $PGL_n(\Q)$.   This gives the map to $PGL_n(\Q)$.  As the holonomy is proper, the kernel is as claimed.

If $H$ is quasi-isometric to $\G$ then $H$ quasi-acts on $\G$, and hence there is
a homomorphism $H$ to ${H^1}_\infty(SL_n(\Z), \R^n) \rtimes Gl_n(\Q)$.   Since the quasi-isometry constants are uniform, the image in $Gl_n(\Q)$ is commensurable with $SL_n(\Z)$.   Further, the kernel of this map to $SL_n(\Z)$, which lives in ${H^1}_\infty(SL_n(\Z), \R^n)$ quasi-acts properly and cocompactly on the fibers, and on each fiber is a group of translations.  Thus the kernel of the map is virtually $\Z^n$ acting in the standard way.  
\end{proof}

Note here that we do not use the quasi-isometric rigidity of $\glnz$ at all here, indeed this proof works for $n=2$ where rigidity certainly fails. 

\section{Bundles over Trees}\label{sectreebase}

Note:  to keep verbiage under control we use {\bf tree} in this section to mean a bushy tree of bounded valence.

When the base of our bundle is a tree, we can get more precise information.  The main reason for this is that, being one dimensional, none of the phenomena involving ${H^2}_\infty$ arise.    If $\G$ acts coboundedly on a tree $T$ of finite valence, then  $\G$ has the structure of a coarse bundle over $T$, with fibers the vertex stabilizer (recall that any two are commensurable).  This is equivalent, via Bass-Serre theory (\cite{BassSerre}), to saying that $\G$ has a decomposition as a graph of groups in which all the edge group inclusions have finite index image.  Since this means all the edge and vertex groups are in a given commensurability class, we call such graphs of groups {\bf homogeneous}.  Some examples are:

\begin{itemize}

\item Semi-direct products with free groups: For any group $H$, and any map $\alpha:F \to Aut(H)$, where $F$ is a free group, the semi-direct product $H \rtimes_\alpha F$ is a homogeneous graph of groups with all vertex and edge stabilizers equal to $H$.

\item The Baumslag-Solitar groups $BS(m,n)=<a,b|a^{-1}b^ma=b^n>$ is a homogeneous graph of $\Z$s.  More generally, the mapping torus of any commensurator is an homogeneous graph of groups. 

\end{itemize}

While we state our results for any coarse locally compact structures, the main results are for homogeneous graphs of $\Z^n$s, and the reader who assumes we are in that case throughout will not miss much of importance.   There may be other examples, but groups of commensurators are not well understood.

\begin{question}  For which groups $\G$ is $Comm(\G)$ coarse locally compact?  
\end{question}

Of course, if $Comm(\G)$ is small, for example just $Aut(\G)$, then it is.  In this case the map to $\QI$ is proper, and so the situation is simpler (essentially $Comm(\G)$ is then a discrete subgroup of $\QI$).   In that setting much can be done simply by the methods of section \ref{section:proper}.  We discuss one example of this, non-uniform lattices, below.    The situation for $\G$ the fundamental group of a closed hyperbolic surface would be interesting to understand, as there all the other hypotheses needed for an analogue of Theorem \ref{thm:graphofzn} hold, but $Comm(\G)$ is quite large and mysterious.
 
In studying homogenoues graphs of groups, we can clearly apply many of the results of this paper.   One of the things which makes this cases easier is that the structures are cocompact: there is a group $\G$ which acts cocompactly preserving all the bundle data.  In particular, for bundles with structure group $G$, there is a homomorphism $\rho: \G \to G$ and the holonomy map is then equivariant, $h(\g x) = \rho(\g) h(x)$.   This, together with the relative simplicity of group actions on trees, makes the situation intelligible.  
\
\begin{defn}  A {\bf halfspace} of $T$ is a component of the complement of an edge.  We say that a halfspace $H$ of $T$ {\bf carries the holonomy} if the image of $T$ in $\hat{G}$ is contained in a neighborhood of the image of $H$.  
\end{defn}

For any edge $e$, one of three things happens: both its complementary halfspaces carry the holonomy, only one does, or neither does.   

\begin{lemma} \label{thm:trichotomy}Let $\G$ be a homogeneous graph of groups with Bass-Serre tree $T$.  Then precisely one the followings holds:

\begin{itemize}

\item No halfspace carries the holonomy.
\item Every halfspace carries the holonomy.
\item For every edge $e$, exactly one of its complementary halfspaces carries the holonomy.

\end{itemize}
\end{lemma}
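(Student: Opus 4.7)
Write $\mathcal{S}$ for the set of halfspaces of $T$ that carry the holonomy. The plan is to show that $\mathcal{S}$ is controlled by a closed $\G$-invariant subset $U \subset \partial T$, and to use cocompactness of the $\G$-action on the bushy tree $T$ to cut the possibilities for $U$ down to three, matching the three alternatives of the lemma.

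I would start with two elementary observations: $\mathcal{S}$ is upward-closed under inclusion of halfspaces, since $H \subset H'$ gives $h(H) \subset h(H')$ and the neighborhood condition propagates; and $\mathcal{S}$ is $\G$-invariant, because $\rho(\G)$ acts on $\hat G$ by isometries and $h$ is equivariant. Cocompactness of the $\G$-action then gives a uniform constant $R^*$ so that every $H \in \mathcal{S}$ satisfies $h(T) \subset N_{R^*}(h(H))$, since the halfspaces in $\mathcal{S}$ fall into only finitely many $\G$-orbits.

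The core step is to define $U \subset \partial T$ as the set of ends $\xi$ such that every nested sequence of halfspaces shrinking to $\xi$ lies in $\mathcal{S}$, and to establish the characterization $H \in \mathcal{S}$ iff $\partial H \cap U \neq \emptyset$. The ``if'' direction is immediate from upward closure: if $\xi \in \partial H \cap U$, then any shrinking sequence to $\xi$ starting at $H$ is in $\mathcal{S}$, so in particular $H \in \mathcal{S}$. The ``only if'' direction is the main technical obstacle. Given $H \in \mathcal{S}$ with bounding edge $e$, I would use bushiness of $T$ together with cocompactness of $\G$ to produce a hyperbolic $\g \in \G$ whose translation axis passes through $e$ and whose attracting fixed point $\xi$ lies in $\partial H$. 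Then $\{\g^n H\}_{n \geq 0}$ is a nested shrinking sequence of $\G$-translates of $H$, all in $\mathcal{S}$ with the uniform constant $R^*$, exhibiting $\xi \in \partial H \cap U$. The subtlety is producing the hyperbolic element with axis placed correctly, which uses that in a cocompact $\G$-action on a bushy tree the hyperbolic axes accumulate densely in $\partial T \times \partial T$.

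Once the characterization $\mathcal{S} = \{H : \partial H \cap U \neq \emptyset\}$ is in hand, $U$ is automatically closed in $\partial T$ (a continuity argument on shrinking sequences: if $\xi_n \to \xi$ with $\xi_n \in U$ and $H_i$ shrinks to $\xi$, then $\xi_n \in \partial H_i$ eventually, so each $H_i \in \mathcal{S}$ by the easy direction) and $\G$-invariant. The final step is to restrict the possibilities for $U$: I claim the only closed $\G$-invariant subsets of $\partial T$ compatible with the hypotheses are $\emptyset$, $\partial T$, and at most one singleton $\{\xi_0\}$ consisting of a $\G$-fixed end. Indeed, if $\G$ fixed two distinct ends, it would preserve the geodesic between them pointwise up to translation; every $\G$-orbit in $T$ would then lie within bounded distance of this line, contradicting the coarse density in the bushy $T$ forced by cocompactness. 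Matching the three cases gives the trichotomy directly: $U = \emptyset$ yields the ``no halfspace carries'' alternative; $U = \partial T$ yields ``every halfspace carries''; and $U = \{\xi_0\}$ yields $\mathcal{S} = \{H : \xi_0 \in \partial H\}$, so because $\xi_0$ lies in the boundary of exactly one of the two complementary halfspaces of every edge, exactly one carries the holonomy for every edge.
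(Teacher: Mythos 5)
Your overall strategy---encoding the set of holonomy-carrying halfspaces by a closed $\G$-invariant subset $U\subset\partial T$ and then classifying the possibilities for $U$---is workable, but as written it has two genuine gaps. The serious one is the claim that hyperbolic axes accumulate densely in $\partial T\times\partial T$ for a cocompact action on a bushy tree. This is false in exactly the case you cannot afford to lose: if $\G$ fixes an end $\xi_0$ (the parabolic, ascending-HNN situation, which is precisely the third alternative of the trichotomy), then every element of $\G$ fixes $\xi_0$, so every hyperbolic axis has $\xi_0$ as one of its two endpoints, and the set of endpoint pairs lies in $(\{\xi_0\}\times\partial T)\cup(\partial T\times\{\xi_0\})$, which is nowhere dense. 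What you actually need is weaker and true: every edge both of whose halfspaces are unbounded lies on some hyperbolic axis (the complement of the minimal invariant subtree has uniformly bounded components by cocompactness), and replacing $\g$ by $\g^{-1}$ puts the attracting end on whichever side of the edge you like. The second gap is the assertion that a nonempty closed invariant subset other than a single fixed end must be all of $\partial T$; this minimality statement is neither proved nor needed. What the endgame requires is only that if $U$ contains two distinct points then it meets the boundaries of both halfspaces of every edge, and that follows by translating the edge separating those two points deep into either halfspace of a given edge.

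For comparison, the paper's proof never leaves the tree. It uses only that carrying the holonomy is $\G$-invariant and monotone under inclusion of halfspaces: if some edge has both halfspaces carrying the holonomy, translate that edge into an arbitrary halfspace $H$, so $H$ contains a carrying halfspace and hence carries; otherwise, if some halfspace $H$ carries, translate an arbitrary edge $e$ off of $H$, so that one complementary component of the translate contains $H$ and hence carries, and pull back by the group element. That is the entire argument. Your boundary apparatus can be repaired along the lines above, but it ends up reproving these two translation steps in disguise, at the cost of the extra (and partly incorrect) dynamical input.
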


\begin{proof}

First consider the case where there is some edge $e_0$ both of whose halfspaces carry the holonomy.    For any halfspace $H$ of any edge we can translate $e_0$  to some $e_1$ in $H$, then since one halfspace of $e_1$ is a subset of $H$ and carries the holonomy, so $H$ carries the holonomy.   As $H$ is arbitrary we shown all halfspaces carry the holonomy, which is case 2.

Now assume no edge has both sides carrying the holonomy.   If no edge has any halfspace carrying the holonomy then we have the first case.    That leaves the case that some edge has a halfspace $H$ that carries the holonomy.  By cocompactness, every edge has a translate not in $H$.  As one of its complementary components then contains $H$, that component carries the holonomy.  Thus one halfspace of every edge carries the holonomy as required.

\end{proof}

\subsection{Parabolic Holonomy}

If exactly one halfspace for every edge carries the holonomy, we call the holonomy map {\bf parabolic}. 

\begin{proposition} Let $\G$ be a homogeneous graph of groups with Bass-Serre tree $T$ with parabolic holonomy.   There is a unique end $a$ of $T$ such that a halfspace carries the holonomy if an only if it contains $a$.  The endpoint is fixed by $\G$, and as a consequence, $\G$ is the mapping torus of a finite index image endomorphism (a finite ascending HNN extension).
\end{proposition}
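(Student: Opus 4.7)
The plan is to show that the good halfspaces assemble into a coherent orientation on $T$ converging to a unique end $a$, then use $\G$-equivariance of the holonomy to conclude $a$ is $\G$-fixed, and finally apply Bass--Serre theory to get the ascending HNN structure.

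First I would show that any two good halfspaces $G(e_1)$ and $G(e_2)$ intersect. Otherwise $G(e_1) \subset T \setminus G(e_2)$, so $h(T)$ lies in a uniform neighborhood of $h(T \setminus G(e_2))$, forcing $T \setminus G(e_2)$ to also carry the holonomy and contradicting the parabolic hypothesis at $e_2$. Next, orient each edge from its bad to its good halfspace. At any vertex, two out-edges would have disjoint good halfspaces (sitting on opposite sides of the vertex), so every vertex has out-degree at most $1$. Sinks are ruled out: on the geodesic from one sink to another the orientations would begin with ``$-$'' (into the first sink) and end with ``$+$'' (into the second), forcing a ``$-+$'' transition at some interior vertex with out-degree $\geq 2$; and a solitary sink would be $\G$-fixed (the set of sinks is $\G$-invariant), contradicting that $\G$ has no global fixed vertex on the Bass--Serre tree of a nontrivial graph of groups. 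Hence every vertex has out-degree exactly $1$, and iterating the out-edge produces an infinite trajectory from each vertex. A routine $+/-$ analysis on the geodesic between two basepoints shows the pattern is of the form $+\cdots+\,{-}\cdots-$ with a unique turnaround vertex, at which both trajectories exit the geodesic sideways along the same out-edge and hence merge, so all trajectories converge to a single end $a$. Every good halfspace $G(e)$ contains $a$ (a trajectory entering $G(e)$ across $e$ never returns), and every halfspace containing $a$ is the good halfspace of its edge (the trajectory from the other side must cross that edge to reach $a$); uniqueness of $a$ follows, since two distinct ends are separated by some edge, whose good halfspace contains only one of them.

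The $\G$-equivariance of $h$, namely $h(\g x)=\rho(\g)h(x)$, implies that $\G$ permutes the good halfspaces and so fixes their common end $a$. A cocompact action on a bushy bounded-valence tree fixing an end corresponds, via Bass--Serre theory, to a single-loop graph of groups whose stable letter $t$ translates along a ray toward $a$, with the ``$a$-opposite'' inclusion of the edge group into the vertex group being the identity. Combined with homogeneity (the other inclusion is a finite-index embedding), this exhibits $\G$ as the mapping torus of a finite-index endomorphism of the vertex group, i.e.\ an ascending HNN extension.

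The main obstacle is the middle part of the first step: ruling out sinks (including the lone-sink case, which needs the $\G$-action on the tree) and confirming that all trajectories from different basepoints converge to the same end via the $+/-$ pattern analysis. Once this is in hand, equivariance of $h$ together with the standard Bass--Serre structure theorem for cocompact tree actions fixing an end dispatches the remaining conclusions.
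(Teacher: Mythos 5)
Your proposal is correct and follows essentially the same route as the paper: orient each edge toward its holonomy-carrying halfspace, show each vertex has out-degree at most one, rule out a vertex sink via $\G$-invariance and cocompactness, deduce a unique invariant end, and then extract the ascending HNN structure from the quotient graph of groups. The paper is terser (it collapses edges of the quotient graph explicitly rather than citing the structure theorem for actions fixing an end, and it leaves the convergence-to-a-common-end argument implicit), but the ideas coincide.
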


\begin{proof}

We have an orientation of the edges which is $\G$ invariant; point the edges 
towards the halfspace which carries the holonomy.   

For every vertex of $v$ there is at most one edge at $v$ oriented away from $v$.  If $e_1$ and $e_2$ are edges at $v$, with $e_1$ oriented away from $v$, then the halfspace of $e_2$ which contains $v$ contains the halfspace of $e_1$ which carries the holonomy, and hence carries the holonomy.  Thus $e_2$ is oriented towards $v$.

This defines a "flow" on $T$.  Such a flow has a unique sink (if there were two, some vertex along the path between them would have both edges oriented out) which is either a vertex of $T$ or an end of $T$.   Since it is unique, the sink is $\G$ invariant and as $\G$ acts cocompactly there is no $\G$ invariant vertex, hence the sink is an invariant end as claimed.

Thus we get a $\G$ invariant orientation  on $T$, in which, at every vertex $v$ there is precisely one edge  oriented away from $v$.  This orientation descends to an orientation of 
the quotient graph with the same property.  Further, given an edge $e$
of the quotient, and a lift $\bar{v}$ of its initial point $v$, there can
only be one lift of $e$ at $\bar{v}$ which implies that the edge group of $e$
is equal to the vertex group at $v$.  Unless both endpoints of $e$ are 
the same, this allows us to collapse $e$ and obtain a smaller, equivalent, graph of 
groups representation of $\G$.  If we continue collapsing edges of
this sort until no more collapses are possible, we reach a graph of 
groups with a single vertex and a single edge, in which one of the edge
to vertex inclusions is an isomorphism.  This is precisely an ascending HNN extension.  

\end{proof}
  
\begin{remark}

A priori, not all ascending HNN extensions give rise to  bundles with parabolic holonomy.  Indeed it is neither clear  that one halfspace of each edge carries the holonomy nor that the other does not.  It is not clear that one cannot have parabolic holonomy when both inclusions are isomorphisms, the case of a mapping torus of an automorphism.

\end{remark}

The geometry of ascending HNN extensions is delicate.  Among the Baumslag-Solitar groups, this is precisely the subclass of solvable groups, and they are shown (\cite{FMbs1}) to be quite rigid, unlike the other Baumslag-Solitar groups,  which are sufficiently flexible that they are all quasi-isometric (\cite{WhyBS}).   Farb and Mosher have generalized their techniques to general ascending HNN extensions of $\Z^n$ (\cite{FMabc}).  The techinques of \cite{WhyBS} also extend to graphs of $\Z^n$ without parabolic holonomy, see section \ref{section:graphofzn}.

\subsection{Coarse locally compact structure groups}

The other two behaviors of holonomy map are only readily analyzable when the structure group is coarsely locally compact (see definition \ref{clc}).  

\begin{proposition} Let $\G$ be a homogeneous graph of groups with Bass-Serre tree $T$ such that no halfspace carries the holonomy.   If the corresponding coarse bundle over $T$ has coarsely locally compact structure group then the holonomy map, $h$, is proper.
\end{proposition}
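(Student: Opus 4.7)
The plan is to prove the contrapositive: assuming $h$ is not proper I will produce a halfspace of $T$ carrying the holonomy, contradicting the hypothesis. Fix a base vertex $v_0 \in T$. By the $\G$-equivariance $h(\g x) = \rho(\g) h(x)$ of the holonomy, together with the fact that left multiplication in $\hat{\Sigma}$ is an isometry and with cocompactness of the $\G$-action on $T$, showing $h$ is proper is equivalent to showing $\rho: \G \to \hat{\Sigma}$ is proper when $\G$ is given the metric of $T$. So suppose this fails: there is a sequence $\g_n \in \G$ with $d_T(v_0, \g_n v_0) \to \infty$ but $d_{\hat{\Sigma}}(h(v_0), \rho(\g_n) h(v_0))$ uniformly bounded; equivalently, $\{\rho(\g_n)\}$ is bounded in the word metric on $\hat{\Sigma}$.

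Bounded sets in the word metric on $\hat{\Sigma}_U$ are contained in uniform subsets of $\Sigma$, because powers $U^k$ are uniform (up to bounded error) by the structure-group axiom. Applying coarse local compactness (Definition \ref{clc}), the set $\{\rho(\g_n)\}$ is covered by finitely many left translates of a fixed uniform $U_0$. Extracting a subsequence, all $\rho(\g_n)$ lie in one such translate, and after replacing $\g_n$ by $\g_1^{-1}\g_n$ for $n \geq 2$ I may assume $\rho(\g_n)$ lies in the uniform set $V := U_0^{-1} U_0$. In particular there is a constant $R$, depending only on $V$ and on the reference generating set for $\hat{\Sigma}$, with $d_{\hat{\Sigma}}(h(v_0), \rho(\g_n) h(v_0)) \leq R$ for every $n$. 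Since $T$ has bounded valence, a further subsequence of $\g_n v_0$ converges to an end $\omega$ of $T$, and the sequence still exits every compact set of $T$.

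Choose any edge $e$ on the geodesic ray from $v_0$ to $\omega$ and let $H$ be the complementary halfspace containing $\omega$. I claim that $H$ carries the holonomy. Given $\g \in \G$, choose $n$ so large that $\g_n v_0 \in H$ is at tree-distance exceeding $|\g|_T$ from $e$; since $\g$ displaces every vertex of $T$ by at most $|\g|_T$, the vertex $(\g\g_n)v_0 = \g(\g_n v_0)$ also lies in $H$. By equivariance and the isometry of left multiplication in $\hat{\Sigma}$,
\[ d_{\hat{\Sigma}}\bigl(h(\g v_0),\, h(\g\g_n v_0)\bigr) = d_{\hat{\Sigma}}\bigl(\rho(\g) h(v_0),\, \rho(\g)\rho(\g_n) h(v_0)\bigr) = d_{\hat{\Sigma}}\bigl(h(v_0),\, \rho(\g_n) h(v_0)\bigr) \leq R. \]
Hence every $h(\g v_0)$ lies within $R$ of $h(H)$. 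Since $\G v_0$ is coarsely dense in $T$ and $h$ is Lipschitz, this upgrades to $h(T) \subset N_{R'}(h(H))$ for some $R' > R$, so $H$ carries the holonomy, the required contradiction.

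The principal subtlety is the step ``$\rho(\g_n)$ bounded in the word metric $\Rightarrow$ $\rho(\g_n)$ lies in a single uniform set (after translation).'' Without coarse local compactness one could have a bounded sequence meeting infinitely many distinct $U_0$-cosets, so the subsequence extraction and the uniform displacement bound $R$ would both fail. Coarse local compactness is the only place the hypothesis on the structure group enters, and it is used essentially to convert the metric ``boundedness'' of $\rho(\g_n)$ into the stronger ``uniform'' boundedness needed to control the action on $h(v_0)$.
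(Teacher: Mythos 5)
Your proof is correct and follows essentially the same route as the paper's: a non-proper sequence is tamed by coarse local compactness into a subsequence with uniformly bounded holonomy displacement, and the halfspace toward its limit end is then shown to carry the holonomy. The only cosmetic difference is that you push each orbit point $\g v_0$ into $H$ via $\g\g_n$ and invoke left-invariance of the word metric on $\hat{\Sigma}$, where the paper instead works with the differences $\g_i^{-1}\g_j v$ and a conjugated bounded sequence.
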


\begin{proof}

Fix $(K_0,C_0)$ as in the definition of coarse local compactness for $\hat{G}$.

If $h$ is not proper, then there is a sequence $\g_i$ so that for all $v \in T$, $\g_i v$ leaves all compact sets and $p(\g_i)$ is bounded in $\hat{G}$.   By passing to a subsequence we may assume that for all $v$, $g_i v$ converges to a point $a$ in $\partial T$.     We claim that any halfspace containing $a$ carries the holonomy.    
  
Let $e$ be an edge, oriented to that $ a \in H_+$.    For any $v$ in $H_-$, consider the sequence $g_i v$.   We have $h(g_i v) = p(g_i) h(v) = h(v) p(g_i)^{h(v)}$.   The sequence $p(g_i)^{h(v)}$ is bounded in $\hat{G}$ and by coarse local compactness that means there is some element $g$ so that infinitely many of the $p(g_i)^{h(v)}$ are in $g \hat{G}_{(K_0,C_0)}$.    Pass to this subsequence.    

We then have $(p(g_i)^{h(v)})\inv p(g_j)^{h(v)} in \hat{G}_{(K_0,C_0)}$ for all $i$ and $j$.    This means $g_i \inv g_j v$ has holonomy within $ \hat{G}_{(K_0,C_0)}$ of $h(v)$.    For all $j >> i$ this point is in $H_+$, so we have $H_+$ carries the holonomy as claimed.
  
\end{proof}

 As we already have seen, when the holonomy is proper the classification problem for bundles reduces to understanding of the image of the the holonomy map.   The last case is more complicated.

\subsection{Folded Holonomy} 

Graphs of groups where all halfpsaces carry the holonomy, the {\bf folded holonomy} case, are at the opposite extreme from proper.  It is remarkable that their quasi-isometric classification works out the same way: in the proper case the lifting problem is trivial and has canonical solution, while the folded holonomy case is flexible enough to allow the lifting problem to be solved, but highly non-canonically.  We start by giving a more precise statement of how non-proper the holonomy map is in the folded case when the structure group is coarse locally compact.

\begin{defn}
 A map $f:T \to \hat{G}$ has {\bf directed path lifting} if there is $V$ uniform in $\hat{G}$ so that for all $U \subset \hat{G}$ uniform there is an $r$ so that for any edge $e$ of $T$ and $g \ in f(T)$ within  $U$ of $f(\iota e)$ there is a $v$ in ${T^+}_e$ within $r$ of $e$ with $f(v)$ within $V$ of $g$.
 \end{defn}
  
This differs from the definition of folded holonomy in that not only are the images of both halfspaces coarsely the same as the image of the whole tree but additionally the distance one must go into a halfspace to find a given image is controlled in terms of how far the desired image point is from the image of the edge.  The name {\bf directed path lifting} comes by comparison with the definition of a coarse fibration and the way the property is used below.  We first show it follows from flded holonomy assuming coarsely locally compact structure group.

\begin{lemma} If $T$ is the Bass-Serre tree of a bushy, homogeneous
graph of groups $\G$ for which the holonomy map $\G \to \hat{G}$ is folded, with $\hat{G}$ coarsely
locally compact, then the holonomy map has directed path lifting.
\end{lemma}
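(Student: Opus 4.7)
The plan is to combine cocompactness of the $\G$-action on $T$, coarse local compactness of $\hat{G}$, and folded holonomy to reduce an a priori infinite family of lifts to a finite maximum. The key structural fact throughout is $\G$-equivariance of the holonomy, $h(\g v) = \rho(\g) h(v)$, together with left-invariance of the word metric on each $\hat{\Sigma}_{U}$.

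First I would show that the directed path lifting property is $\G$-equivariant. Given an edge $e = \g e_0$ and a point $g \in h(T)$ within $U$ of $h(\iota e)$, the point $\rho(\g)^{-1} g$ lies in $h(T)$ (since $h(T)$ is $\G$-invariant) and is within $U$ of $h(\iota e_0)$, because $h(\iota e_0)^{-1} \rho(\g)^{-1} g = h(\iota e)^{-1} g$. A lift $v_0 \in T^+_{e_0}$ with $d(v_0, e_0) \leq r$ and $h(v_0)$ within $V$ of $\rho(\g)^{-1} g$ then transports to $v = \g v_0 \in T^+_e$ with $d(v, e) = d(v_0, e_0) \leq r$ and $h(v)$ within $V$ of $g$. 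By cocompactness of the $\G$-action on oriented edges, it therefore suffices to establish directed path lifting at a finite set of orbit representatives $e_1, \ldots, e_n$.

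Next I would fix such an $e_i$ and a uniform set $U$, and use coarse local compactness to cover $U \subset \bigcup_{k=1}^{N} h_k U_0$ by finitely many left translates of the fixed uniform $U_0$. This partitions the points to be lifted into finitely many pieces $A_{i,k} = h(T) \cap h(\iota e_i) h_k U_0$, each contained in the left translate $h(\iota e_i) h_k U_0$ and hence of diameter at most a fixed constant in the word metric on $\hat{\Sigma}_{U_0}$. For each non-empty $A_{i,k}$ pick a representative $g_{i,k}$. Folded holonomy provides a fixed uniform $V_0$ such that $h(T)$ lies within $V_0$ of $h(T^+_{e_i})$; hence there is a $v_{i,k} \in T^+_{e_i}$ with $h(v_{i,k})$ within $V_0$ of $g_{i,k}$. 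Set $r := \max_{i,k} d(v_{i,k}, e_i)$, which is finite because the collection is finite. Any other $g \in A_{i,k}$ is close enough to $g_{i,k}$ that $h(v_{i,k})$ is within a uniform set $V$ of $g$, where $V$ is built from $V_0$ and $U_0^{-1} U_0$ and so is independent of the original $U$.

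The main obstacle I anticipate is the bookkeeping for the various uniform structures. One needs to verify that the relation ``within $U$ of'' transports correctly under left multiplication by $\rho(\g)^{-1}$, that products and inverses of uniform sets stay controlled in a way compatible with the coarse-locally-compact cover by translates of $U_0$, and above all that the final uniform set $V$ can be chosen independent of $U$ (depending only on the fixed $U_0$ and the folded-holonomy constant $V_0$). None of this is deep, but it requires care to state cleanly, and this is the point at which the hypothesis of coarse local compactness is genuinely used rather than just folded holonomy alone.
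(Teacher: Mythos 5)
Your proposal is correct and follows essentially the same route as the paper: reduce to finitely many edge orbits by cocompactness and equivariance of the holonomy, cover $U$ by finitely many translates of $U_0$ via coarse local compactness, use foldedness to produce one lift per translate, and take $r$ to be the maximum of the resulting finitely many distances. Your bookkeeping of the uniform sets (tracking $V_0$ and $U_0^{-1}U_0$ explicitly) is somewhat more careful than the paper's, but the argument is the same.
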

\begin{proof} 

Without loss of generality we may assume (notation as in the definition of directed path lifting) that $e$ is in a fixed (finite) fundamental domain of $T$.     If for each such $e$ the definition holds then the claim follows simply by taking the largest $r$ among all the edges (and the union of all  the $V$).   Thus we just need to see that for any edge $e$ there is a $V$ so that for all $U$ there is  an $r$ so that for any $g$ within $U$ there is a $v$ in $T^+$ within $r$ of $e$ with holonomy within $V$ of  $g$.    

By coarse local compactness such a $U$ is covered by a finite collection of translates of a fixed $U_0$.    We let $V$ be at least as big as this $U_0$ so we need to simply find a finite collection of points in the half space.   Since the holonomy is folded we know we can do this within some uniform set.   Take $V$ to be the union of this and $U_0$, and let $r$ be the suprema of the distances of these finitely many edges to $e$.

\end{proof}

We now aim to show that the coarse image of the holonomy (up to conjugation) is a complete invariant for bundles with folded holonomy.   The main theorem here is:
  
\begin{theorem}\label{lifting} Let $f_{i}:T_{i} \to \hat{G}$ be lipschitz 
maps.  Suppose both $f_{i}$ have directed path lifting and have images at finite Hausdorff distance, then  $T_{1} \to T_{2}$ are quasi-isometric over $\hat{G}$.
\end{theorem}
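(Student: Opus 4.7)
The plan is to construct a map $\phi: T_1 \to T_2$ inductively, using the directed path lifting of $f_2$ to place $\phi(v)$ in a controlled halfspace at each step. The ``no-backtracking'' built into the construction forces geodesics in $T_1$ to map to quasi-geodesics in $T_2$, which is the heart of the argument. Fix basepoints $v_0 \in T_1$ and $w_0 \in T_2$ with $d_{\hat{G}}(f_1(v_0), f_2(w_0))$ uniformly bounded, possible by Hausdorff equivalence of images. Order vertices of $T_1$ by distance from $v_0$, and for each $v \neq v_0$ let $v^-$ denote its parent on the (unique) $v_0$-to-$v$ geodesic.

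Inductively, given $\phi(v^-)$ with $d_{\hat{G}}(f_2(\phi(v^-)), f_1(v^-))$ uniformly bounded, choose an edge $e'_v$ at $\phi(v^-)$ in $T_2$ pointing \emph{away} from $\phi((v^-)^-)$ (any edge if $v^-=v_0$). Since $f_1$ is Lipschitz, $f_1(v)$ lies in a uniform neighborhood of $f_2(\iota e'_v)=f_2(\phi(v^-))$; since $f_1(T_1)$ and $f_2(T_2)$ are at finite Hausdorff distance, a target in $f_2(T_2)$ uniformly close to $f_1(v)$ exists. Apply the directed path lifting of $f_2$ to $e'_v$ with this target to obtain $\phi(v) \in {T^+}_{e'_v} \subset T_2$ with $d_{T_2}(\phi(v), e'_v) \leq r$ and $d_{\hat{G}}(f_2(\phi(v)), f_1(v))$ uniformly bounded. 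The resulting $\phi$ is Lipschitz (each step in $T_1$ moves by at most $r+1$ in $T_2$) and satisfies $f_2 \circ \phi \approx f_1$, so it is a map over $\hat{G}$.

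The main obstacle is the quasi-isometric embedding property. Along a geodesic $u_0=v_0, u_1, \ldots, u_n$ in $T_1$, the choice of $e'_{u_{i+1}}$ away from $\phi(u_{i-1})$ places $\phi(u_{i+1})$ in a halfspace of $\phi(u_i)$ separated from $\phi(u_{i-1})$ by the edge $e'_{u_{i+1}}$. In a tree this forces every geodesic in $T_2$ from $\phi(u_{i-1})$ to $\phi(u_{i+1})$ to cross $e'_{u_{i+1}}$, so in particular the $T_2$-geodesic from $\phi(u_0)$ to $\phi(u_n)$ must cross each of $e'_{u_1}, \ldots, e'_{u_n}$. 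The difficulty is converting this combinatorial statement into a \emph{linear} lower bound on $d_{T_2}(\phi(u_0), \phi(u_n))$: consecutive edges $e'_{u_i}$ and $e'_{u_{i+1}}$ live inside a common ball of radius $r$ about $\phi(u_i)$, so one needs a careful counting argument, essentially grouping the edges into blocks of size controlled by $r$ and using bounded valence, to conclude $d_{T_2}(\phi(u_0), \phi(u_n)) \geq n/c$ for a uniform constant $c$. This is where I expect most of the work to go.

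Finally, for coarse surjectivity, one runs the symmetric construction to obtain a quasi-isometric embedding $\psi: T_2 \to T_1$ over $\hat{G}$, built by the same inductive rule using the directed path lifting of $f_1$. The compositions $\phi \circ \psi$ and $\psi \circ \phi$ are then self-maps that track the respective identities under the holonomy maps and, by the same no-backtracking structure, map geodesics to quasi-geodesics with uniform constants. A tree argument using both directed path liftings simultaneously then shows these compositions are at bounded distance from the identities, so $\phi$ is in fact a quasi-isometry $T_1 \to T_2$ over $\hat{G}$, as required.
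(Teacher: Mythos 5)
Your construction controls only the geodesics of $T_1$ emanating from the basepoint $v_0$, and that is not enough. For a single ray $v_0=u_0,u_1,\dots$ the no-backtracking argument does work (indeed more easily than you fear: since $e'_{u_{i+1}}$ is an edge at $\phi(u_i)$ whose far halfspace contains $\phi(u_{i+1})$ but not $\phi(u_{i-1})$, an easy induction on nested halfspaces shows the $\phi(u_i)$ lie in order on a geodesic of $T_2$, so $d(\phi(u_0),\phi(u_n))\ge n$ with no counting needed). The gap is between \emph{branches}: if $u$ and $w$ lie in different subtrees below a common ancestor $c$, nothing in your inductive rule prevents the two descending paths from being mapped into the same halfspace at $\phi(c)$, or indeed right on top of each other, so $d_{T_2}(\phi(u),\phi(w))$ can stay bounded while $d_{T_1}(u,w)\to\infty$. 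You cannot appeal to the holonomy to separate them, because in the case this theorem is actually used (folded holonomy) $f_1$ is deliberately very far from proper, so $f_1(u)$ and $f_1(w)$ can be close for $u,w$ far apart. Forcing distinct children of $c$ into disjoint halfspaces is not always possible either ($T_2$ may have smaller valence than $T_1$ at the relevant spot, and directed path lifting only hands you \emph{some} point within $r$ of a prescribed edge with the right holonomy, not a supply of distinct ones). A second, independent gap is coarse surjectivity: you assert that $\phi\circ\psi$ and $\psi\circ\phi$, being self-maps over $\hat{G}$ tracking the identity on holonomy, are at bounded distance from the identity. That is false in general for non-proper holonomy --- a self-map over $\hat{G}$ can move points arbitrarily far --- and proving it is essentially as hard as the theorem itself.

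These two gaps are exactly what the paper's proof is organized around, and it takes a genuinely more global route. Lemma \ref{lift} produces, inside $T_2$, a whole subtree $T''$ quasi-isometric over $\hat{G}$ to $T_1$, arranged so that the edges adjacent to but not in $T''$ are $B$-dense in it; one then covers $T_2$ by disjoint such subtrees and exhibits $T_2$ (and, by the same argument, $T_1$) as a ``tree of copies of $T_1$'' indexed by an infinite-valence tree with vertex labels. The two labelled structures are then matched using the bounded-distance bijections between $B$-dense subsets of a bushy tree from \cite{thesis}. It is this matching step --- not a pointwise inductive lift --- that simultaneously keeps distinct branches apart and guarantees surjectivity. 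If you want to rescue your approach, you would need to budget the ``ports'' available near each $\phi(v^-)$ against the children that must be routed through them, which is precisely the counting that the $B$-density condition and the bijection theorem are doing for you.
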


\begin{proof}

 The idea is to show that any $T \to \hat{G}$ with directed path lifting can be decomposed (over $\hat{G}$) to resemble a free product of any other tree with nearby image in $\hat{G}$.    To do this we need to "foliate" $T$ by subtrees quasi-isometric over $\hat{G}$ to the given model.     The following lemma is the tool for finding the necessary subtrees.

\begin{lemma}\label{lift}  Let $f:T \to \hat{G}$ be lipschitz and have directed path lifting. Given any $T'$ and a lipschitz map $p:T' \to G$ with $p(T)$ Hausdorff equivalent to $f(T)$ there is $U \subset \hat{G}$and constants $(A,B)$ so that  given any $v \in T$, any edge $e$ at $v$, and any $u \in T'$ with $p(u)$ in the $U$-neighborhood of $f(v)$ there is a subtree $T''$ of $T$,  with $v \in T''$, $e \notin T''$, and an $(A,B)$ quasi-isometry $T' \to T''$ commuting over $\hat{G}$ to within $U$ and taking $u$ within the $B$ neighborhood of $v$. \\

Further, the tree $T'$ can be chosen so that the edges adjacent to, but not in, $T''$ are $B$-dense in $T''$.  
\end{lemma}

Using this lemma, we prove Theorem \ref{lifting} as follows.  We first claim that there is a vertex covering of $T_{2}$ by disjoint subtrees, which are lifts of $T_{1}$ as in lemma and such that the maps from each subtree to  $T_1$ assemble to a Lipschitz map $T_2 \to T_1$.  

We build this inductively, covering larger and larger subtrees of $T_2$.  We can start the induction by finding a single subtree from the lemma.    Suppose then that we have covered some ${T'}_2$ in $T_2$.  If this is not $T_{2}$ then there is an edge $e$ with one endpoint in, and one endpoint not in ${T'}_2$.  We then build a lift of $T_{1}$ through the other endpoint of $e$ on the opposite side of $e$ from  ${T'}_2$, making sure the endpoint of $e$ maps close to the image of the vertex where it connects to ${T'}_2$.    Thus we can cover all of $T_2$ as claimed.    Further,  the lemma guarantees that the edges of $T_2$ not in any of these subtrees hit each subtree in a $B$-dense subset.

This foliation describes $T_2$ as quasi-isometric to one built as follows:

Let $Q$ be a homogeneous tree of infinite valence, with the edges at each vertices of $Q$ labelled by vertices of $T_{1}$.  One can builds a tree $T$ with vertices  $VT_{1} \times VQ$ and with edges glued in over the edges of $Q$, attached to the vertices whose $T_{1}$ components correspond to the labelling. 

Two such trees are quasi-isometric over $\hat{G}$ if there is an isomorphism between $Q$ and $Q'$ which is at uniform bounded distance from the identity with respect to the labellings.  We can build such an isomorphism inductively simply by producing bijections between the edges at a vertices of $Q$ and one of $Q'$ one vertex at a time.  Appropriate bijections exist by \cite{thesis} as we know the labellings correspond to $B$-dense subsets of $T_1$.  Thus any two such trees are quasi-isometric over $\hat{G}$.   In particular, applying the same arguments to $T_{1}$ mapping to itself shows $T_{1}$ also has this form and this proves the theorem.

\end{proof}

We now prove Lemma \ref{lift}.  

\begin{proof}

Build the lift inductively on the ball of radius $n$ in $T_{1}$ with the further property that the map is injective on vertices and takes the sphere of radius $n$ to extreme
points of the image subtree.  To extend to radius $n+1$, take $r_{0}$  large enough so that any hemisphere (sphere around an edge intersected with one halfspace of that edge) of radius $r_{0}$ has strictly larger cardinality than the valence of $T_{1}$.  For each vertex $v$ in the sphere or radius $n$, arbitrarily map the adjacent vertices in the sphere of radius $n+1$ into the hemisphere at $f(v)$ avoiding the edge connecting to the image of the ball of radius $n$.  Now, starting with the edge leaving the sphere, use highly non-properness to find a short path from there to a point which maps near where it belongs in $\hat{G}$. As there where extra vertices in the hemi-sphere which we did not use, we know that the missed edges are dense, as needed.

\end{proof}

The results of this section may have more general applications in 
the geometric study of finitely generated groups than the ones given 
here.  We note the following two consequences:

\begin{theorem} Let $\G$ be a finitely generated group which is 
not free.  Let $S_{1}$ and $S_{2}$ be finite generating sets, then 
there is a quasi-isometry $F<S_{1}> \to F<S_{2}>$ commuting up to 
bounded distance with the maps to $\G$.
\end{theorem}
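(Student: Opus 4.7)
The plan is to view each evaluation map $\pi_i \colon F\langle S_i\rangle \to \G$ as a $1$-Lipschitz map from the bushy, bounded-valence Cayley tree of the free group to the structure ``group'' $\G$, where we take $\hat G = \G$ with its word metric. Every finitely generated group with its word metric is coarsely locally compact, since bounded sets are finite. Both $\pi_i$ are surjective, so their images in $\G$ coincide and the Hausdorff-equivalent-holonomy hypothesis of Theorem~\ref{lifting} is automatic. The theorem thus reduces to verifying that each $\pi_i$ has directed path lifting, after which Theorem~\ref{lifting} produces the required quasi-isometry $F\langle S_1\rangle \to F\langle S_2\rangle$ commuting up to bounded distance with $\pi_1, \pi_2$.

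To verify directed path lifting for $\pi=\pi_i$ with $S=S_i$: by $F\langle S\rangle$-equivariance only finitely many edge orbits need treating, so fix an edge $e$ of $F\langle S\rangle$ labelled by a generator $s$ with $\iota e = v$, $\tau e = vs$. Given $N>0$ and $g\in\G$ with $d_\G(g,\pi(v))\leq N$, set $h:=\pi(s)^{-1}\pi(v)^{-1}g$ and pick a reduced word $u_0 \in F\langle S\rangle$ of minimal length representing $h$, so $|u_0|\leq N+1$. The goal is to find a reduced word $u$ representing $h$, not beginning with $s^{-1}$, of length controlled by $N$; then $v':=vsu\in T^+_e$ satisfies $\pi(v')=g$ and lies within distance $|u|+1$ of $e$.

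The non-freeness of $\G$ supplies a nontrivial kernel $K=\ker(\pi)\trianglelefteq F\langle S\rangle$. Pick any nontrivial $w_1\in K$. For $m$ sufficiently large, the conjugate $w:=s^m w_1 s^{-m}\in K$ has reduced form beginning with $s$ and ending with $s^{-1}$ (the outer $s^{\pm m}$'s survive any internal cancellation with the extreme letters of $w_1$). If $u_0$ does not begin with $s^{-1}$ take $u=u_0$; otherwise take $u=wu_0$. In the latter case there is no cancellation at the seam, since $w$ ends with $s^{-1}$ and $u_0$ begins with $s^{-1}$, so $wu_0$ is already reduced; it begins with $s$ and has length $\leq |w|+|u_0|$. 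In either case $\pi(u)=h$ and $|u|\leq N+1+|w|$, where $|w|$ depends only on $\G$ and $S$. This gives directed path lifting with $V=\{e\}$ and $r(N)=N+2+|w|$.

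The main obstacle is exactly this word-combinatorial verification: one must ensure that after prepending a relator, free reduction does not eat away the chosen leading letter. Conjugating $w_1$ by a high power of $s$ (to force the extreme letters of $w$ to be $s$ and $s^{-1}$) prevents any cancellation at the seam with a word starting with $s^{-1}$, which is the clean fix. With directed path lifting established for both $\pi_i$, Theorem~\ref{lifting} closes the argument. Degenerate cases with some $|S_i|=1$ fall outside the bushy hypothesis of Theorem~\ref{lifting} — they force $\G$ cyclic, hence (since $\G$ is not free) finite — and must be excluded or handled separately, but these are not the content of the theorem.
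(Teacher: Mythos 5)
Your argument is correct and is exactly the route the paper intends: the paper states this theorem without proof, as a consequence of Theorem \ref{lifting}, and you supply the one missing ingredient, namely the verification of directed path lifting for the evaluation maps $\pi_i$ (together with the correct observation that the non-bushy case $|S_i|=1$ must be excluded, since there the statement can genuinely fail). The only point to tighten is that $s^m w_1 s^{-m}$ does not end in $s^{-1}$ when $w_1$ is a power of $s$; since $\ker \pi$ is a nontrivial normal subgroup of a free group of rank at least two, it always contains an element that is not a power of $s$ (conjugate by another generator if necessary), so choose $w_1$ accordingly and the rest of your cancellation argument goes through verbatim.
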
 

\begin{theorem} Let $\G_i$ be finitely generated non-free groups, 
and $f:\G_{1} \to \G_{2}$ a quasi-isometry.  For any $S_{i}$ 
finite generating sets there is a quasi-isometry $F<S_{1}> \to 
F<S_{2}>$ which covers the given quasi-isometry.
\end{theorem}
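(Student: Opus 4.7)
The plan is to deduce this from the Lifting Theorem (Theorem \ref{lifting}). Regard both trees as mapping into the single metric space $\G_2$: $F\langle S_1\rangle$ via the composition $F\langle S_1\rangle \twoheadrightarrow \G_1 \xrightarrow{f} \G_2$, and $F\langle S_2\rangle$ via the natural surjection $F\langle S_2\rangle \twoheadrightarrow \G_2$. Both maps are coarse Lipschitz, and since $f$ is a quasi-isometry and the natural surjection is onto, both images coarsely fill $\G_2$, so they are at finite Hausdorff distance. Provided both maps have directed path lifting, Theorem \ref{lifting} produces a quasi-isometry $F\langle S_1\rangle \to F\langle S_2\rangle$ commuting up to bounded distance with these two maps to $\G_2$, which is exactly the assertion that it covers $f$. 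The previous theorem (the case $\G_1=\G_2$, $f=\mathrm{id}$) falls out by the same argument.

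The key content is to show that the natural Cayley map $\pi:F\langle S\rangle \twoheadrightarrow \G$ has directed path lifting whenever $\G$ is finitely generated and non-free. Let $K=\ker(\pi)$, a non-trivial normal subgroup of $F\langle S\rangle$. Taking any non-trivial $r\in K$ and conjugating by generators produces, for each letter $s \in S\cup S^{-1}$, an element of $K$ whose reduced form begins with $s$ and whose length is bounded by a constant $L(\G,S)$. Given an edge $e=(v,vs)$ of $F\langle S\rangle$ and a target $g\in\G$ with $d_\G(\pi(v),g)\leq D$, set $h=\pi(v)^{-1}g$ and pick a reduced preimage $\tilde h$ of $h$ in $F\langle S\rangle$ of length $\leq D$. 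If $\tilde h$ already begins with $s$ then $v\tilde h \in T^+_e$ lies within tree-distance $D$ of $e$ and has image $g$; otherwise produce a reduced preimage $m$ of $s^{-1}h$ of length $\leq 1+D$ that does \emph{not} begin with $s^{-1}$ (multiplying on the left by a short $K$-element starting with a suitable letter if necessary), and then $v\cdot (sm) \in T^+_e$ lies within tree-distance $1+D$ of $e$ and maps to $g$. This gives directed path lifting with a uniform $V$ and $r$ growing linearly in $D$.

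Composition with the quasi-isometry $f$ preserves directed path lifting because bounded sets in $\G_2$ pull back to bounded sets in $\G_1$, so both maps $F\langle S_i\rangle \to \G_2$ satisfy the hypotheses of Theorem \ref{lifting} and we are done. The main obstacle is the bookkeeping of free-group cancellations when assembling $sm$ and when constructing the short kernel elements starting with a prescribed letter: one must check that the final word does not freely reduce across the edge $e$ back into the wrong halfspace. The essential input is that non-freeness makes $K$ rich enough to allow one to redirect a word to begin with any prescribed letter at bounded additive length cost, which is precisely where the hypothesis that $\G_i$ is not free is used.
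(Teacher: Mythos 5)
Your proposal is correct and follows the route the paper intends: the theorem is stated there without proof as a consequence of Theorem \ref{lifting}, and your reduction --- viewing both Cayley trees $F\langle S_i\rangle$ as mapping to $\G_2$ (one via $f\circ\pi_1$, one via $\pi_2$) and checking their images are at finite Hausdorff distance --- is exactly that. The one piece of genuine content, namely that non-freeness of $\G$ makes the kernel of $F\langle S\rangle \to \G$ rich enough to yield directed path lifting (equivalently, folded holonomy with the coarsely locally compact target $\G$), is supplied correctly by your conjugation argument, modulo the free-group cancellation bookkeeping you already flag.
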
 

In both of these results one can relax non-free to free with 
non-minimal generating set.  

\section{Applications}\label{section:graphofzn}

The results of the previous sections let us prove Theorem \ref{thm:graphofzn} classifying homogeneous graphs of $\Z^n$s.   First we observe that by the results of \cite{MSW1} any group quasi-isometric to such a graph of groups is itself one, and that all quasi-isometries between such groups are automatically fiber preserving.    The classifying bundles classifies the groups.

All these bundles can have their structure groups reduced to $\glnr$ as they naturally have affine structure group and as bundles over trees always have coarse sections.    Further, by lemma{glnrstructurerigid}, the classification then reduces to the classification as coarse $\glnr$ bundles.    We have already seen that this implies the image of the holonomy map, up to Hausdorff equivalence, is a quasi-isometry invariant.   

For bundles within a given holonomy type, the trichotomy theorem above says that these divide into three classes: proper, parabolic, and folded.   In the first and last case there is at most one quasi-isometry type.   The parabolic case occurs precisely for ascending HNN extensions, and these are classified in \cite{FMabc}.   All that remains is:

\begin{lemma}  If $\G$ is the fundamental group of a graph of $\Z^n$s with proper holonomy then $\G$ is virtually a semi-direct product $\Z^n \by F$ for $F$ a free subgroup of $\glnz$.
\end{lemma}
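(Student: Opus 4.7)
The plan is to show that proper holonomy forces $\G$ to have, up to finite index, the structure of a semi-direct product $\Z^n \rtimes F$ with $F \subset \glnz$. Let $A \cong \Z^n$ be a vertex stabilizer of the Bass--Serre tree $T$, and let $\rho : \G \to \glnr$ be the holonomy homomorphism, equivariant with respect to the holonomy embedding $\h : T \to \glnr$.

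First I would identify the kernel $K = \ker(\rho)$. Since $A$ is abelian and $\rho(\g)$ is conjugation by $\g$ on (the commensurability class of) $A$, we have $A \subset K$. Conversely, if $k \in K$ then $\h(k v_0) = \rho(k) \h(v_0) = \h(v_0)$, so $K \cdot v_0$ is contained in $\h^{-1}(\h(v_0))$, which is finite by properness of $\h$ together with bounded valence of $T$. Hence a finite-index subgroup of $K$ fixes $v_0$, so $K$ is commensurable with $A \cong \Z^n$.

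Next I would show that $\bar\G := \rho(\G)$ is virtually free. It acts on $\h(T) \subset \glnr$ by left translation, freely (left translation has no fixed points) and cocompactly (by equivariance of $\h$ and cocompactness of $\G$ on $T$). Since $\h$ is a proper Lipschitz embedding, $\h(T)$ is quasi-isometric to the bushy tree $T$, so Stallings' theorem gives that $\bar\G$ is virtually free. Pick a finite-index free subgroup $\bar F \subset \bar\G$ and let $\G' := \rho^{-1}(\bar F)$, which has finite index in $\G$ and fits in $1 \to K \to \G' \to \bar F \to 1$. Because $\bar F$ is free, this splits as $\G' = K \rtimes \bar F$.

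To finish, let $K_0 \subset K$ be the characteristic $\Z^n$ subgroup generated by $m$-th powers, where $m$ is the order of the finite quotient $K / \mathrm{Fit}(K)$. This is a characteristic, finite-index $\Z^n$ subgroup of $K$, and $K_0 \rtimes \bar F$ is a finite-index subgroup of $\G'$ isomorphic to $\Z^n \by F$ for the free group $F := \bar F$. The conjugation action gives an embedding $F \hookrightarrow \mathrm{Aut}(K_0) = \glnz$: if $f \in F$ centralizes $K_0$, then for any $a \in A$ one has $a^m \in K_0$, so $(f a f^{-1})^m = a^m$ inside the torsion-free abelian group $\mathrm{Fit}(K)$, forcing $f a f^{-1} = a$; thus $f \in Z_\G(A) = K$, so $f \in F \cap K = \{e\}$.

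The main obstacle is the first step: proper holonomy is precisely what is needed to constrain the kernel $K$ to be virtually $\Z^n$ rather than something larger, and this constraint drives everything else. The remaining bookkeeping --- extracting a characteristic $\Z^n$ subgroup and verifying that its intrinsic $\bar F$-action recovers the holonomy on the nose --- is handled by taking $K^m$ and exploiting the commensurability of $K_0$ with $A$.
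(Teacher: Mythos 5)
Your argument is correct in its main thrust but follows a genuinely different route from the paper's. The paper's proof is a short Bass--Serre argument: it shows that proper holonomy forces every vertex stabilizer to fix the entire tree. (If $g$ fixes a vertex $v$ but not an adjacent edge $e$, then $g$ carries the halfspace of $e$ away from $v$ onto the disjoint halfspace of $ge$ away from $v$; since $g$ acts on fibers by translation, $\rho(g)=1$ and the holonomy map is $g$-invariant, so these two disjoint infinite subtrees have the same holonomy image, contradicting properness.) From this the kernel of the action on $T$ is the common vertex group $\Z^n$, the quotient acts freely on $T$ and is therefore free on the nose, and the extension splits. You instead pin down $\ker\rho$ using properness localized at a single vertex (essentially the same use of properness), and then obtain virtual freeness of $\G/\ker\rho$ by running Milnor--\v{S}varc on its free action on $h(T)$ and invoking Stallings plus accessibility. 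This is heavier machinery and only yields ``virtually free,'' forcing extra passages to finite index, but it bypasses the tree combinatorics and would apply verbatim to other coarsely locally compact structure groups; you also make the injectivity of $F \to \glnz$ explicit, which the paper leaves implicit.

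Two soft spots to patch. First, Milnor--\v{S}varc needs the $\rho(\G)$-action on $h(T)$ to be metrically proper, not merely free and cobounded; this does hold (if $d(\rho(\g)h(v_0),h(v_0))\leq R$ then $\g v_0$ lies in a finite set by properness of $h$, and $\rho(\mathrm{Stab}(v_0))=1$, so only finitely many $\rho(\g)$ occur), but it should be said. Second, the step ``$(faf^{-1})^m=a^m$ forces $faf^{-1}=a$'' requires unique $m$-th roots in $K$, which fails in a general torsion-free virtually $\Z^n$ (Bieberbach) group --- e.g.\ $(ab)^2=b^2$ in the Klein bottle group $\langle a,b \mid bab^{-1}=a^{-1}\rangle$. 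The detour is unnecessary: if $f$ centralizes the finite-index subgroup $K_0\cong\Z^n$ of a fiber, then $\rho(f)=1$ in $GL_n(\Q)$ directly, so $f\in \bar F\cap K=\{1\}$. (In fact $K$ is honestly $\Z^n$ here: it is finitely generated and every element is elliptic, so $K$ fixes a vertex and sits inside a vertex group.)
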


\begin{proof}

This amounts to the claim that proper holonomy means that the stabilzer of any vertex of the Bass-Serre tree fixes the entire tree.     Suppose $g$ fixes a vertex $v$ but does not fix an adjacent edge $e$.   Let $H_1$ be the half of the tree on the opposite side of $e$ from $v$ and let $H_2$ be the  half of the tree on the opposite side of $ge$ from $v$.    By definition $gH_1 = H_2$.  As $g$ is a vertex stabilizer, hence a translation, the holonomy maps $g$ invariant, hence $H_1$ and $H_2$ have the same image under the holonomy map.    As both are infinite subtrees this contracts properness of the holonomy map.

\end{proof}

Another case where these results apply directly is to homogeneous graphs of non-uniform lattices (other than in $SL_2 \R$).     Let $\Lambda$ be such a lattice in a semisimple Lie group $G$.  By Mostow no such $\Lambda$ is conjugate to a proper finite index subgroup of itself, so there are no ascending HNN extensions with the fiber group, and correspondingly we need only consider the proper and folded holonomy cases.

As discussed previously (example \ref{examples:structures}) the quasi-isometry group of $\Lambda$ is virtually the commesurator inside $G$.  So from any homogeneous graph of $\Lambda$s we get a homorphism to $Comm(\Lambda)$.  Thus the holonomy gives a well defined commensurability class of  finitely generated subgroup $H$ of $Comm(\Lambda)$ which virtually contains $\Lambda$.   If there is infinite kernel then the holonomy map is clearly folded.    Thus in the proper case we know quasi-isometric means commensurable.   

In the folded case we still have a well defined commensurability class of group $H$, and one example of such a bundle is $H *_ {\Lambda} (\Lambda \times \Z)$.   In other words we simply take $H$ and add elements which commute with $\Lambda$ and have no other restrictions.    Thus all the groups in the folded holonomy case are quasi-isometric to such an amalgamated product.   

It is clear than any such $H$ can occur  - choose generators for $H$ over $\Lambda$ and build a  rose of groups with those generators on the edges.    These subgroups are actually quite limited however - for example, for $\Lambda=SL_n(\Z)$ it follows from a result of Venkataramana (\cite{Venkataramana}) that any such subgroup is commensurable to $Sl_n(Z[\frac{1}{S}])$ for some $S$.    This means, in particular, that the image of the holonomy map is just the product of the corresponding buildings and so the holonomy cannot be proper (as such a product is not quasi-isometric to a tree!).   Indeed the only case where proper holonomy seems possible is in rank one with a single prime, where the building is indeed a tree and the action on in gives such a graph of groups.

\subsection{Hausdorff Equivalences among Subgroups of $\glnr$} \label{sec:HausdorffGLnR}

 To make Theorem \ref{thm:graphofzn} more useful, we need to classify subgroups
 of $GL_n\R$ up to finite Hausdorff distance.  This question seems
 interesting in its own right as well.  Clearly one may restrict to closed
 subgroups.  One has the results \cite{Witte} for connected unimodular subgroups generalizing the results on one parameter subgroups from \cite{benerdete} ( both papers interested in questions about foliations with nothing "coarse" about them).  In the next section we work
 out the complete answer for $n=2$.  Here we must be content with a few
 general observations and a conjecture.
 
 \begin{lemma}\label{prod} Let $G$ be a Lie group, and let $K$ be a closed
 subgroup of
 $G \times \R$.  Either there is a closed subgroup $H$ in $G$ so that $K$
 is the graph of a continuous homomorphism $H \to \R$ or $K$ is at finite
 Hausdorff distance from a subgroup of the form $G' \times \R$ for $G'$ a
 subgroup of $G$.
 \end{lemma}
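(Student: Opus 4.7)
The plan is to analyze the closed subgroup $K_0 := K \cap (\{e\}\times\R)$ of $\{e\}\times\R \cong \R$; as a closed subgroup of $\R$ it is one of $\{(e,0)\}$, $\{e\}\times c\Z$ for some $c>0$, or $\{e\}\times\R$.  These three cases lead to structural descriptions of $K$ from which the dichotomy follows.

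If $K_0 = \{e\}\times\R$ then $K$ contains the full vertical line over $e$, forcing $K = \pi_1(K) \times \R$.  Since $\pi_1(K) \times \{0\} = K \cap (G \times \{0\})$ is closed in $G \times \{0\} \cong G$, the group $G' := \pi_1(K)$ is a closed subgroup of $G$ and gives the second alternative with Hausdorff distance zero.  If $K_0 = \{e\}\times c\Z$ for some $c>0$, then $K \subseteq \pi_1(K)\times\R$, and each fiber $K \cap (\{g\}\times\R)$, for $g \in \pi_1(K)$, is a coset of $c\Z$, hence $c/2$-dense in $\{g\}\times\R$.  Closedness of $\pi_1(K)$ follows by lifting a convergent sequence $g_n \to g$ to $(g_n,t_n)\in K$ with $t_n \in [0,c)$ and using compactness of $[0,c]$ to extract a convergent subsequence whose limit must lie in $K$.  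Thus $K$ is within Hausdorff distance $c/2$ of $G' \times \R$.

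The substantive case is $K_0 = \{(e,0)\}$, in which $\pi_1|_K$ is injective and $K = \{(h,\phi(h)) : h \in H\}$ is the graph of a set-theoretic homomorphism $\phi: H \to \R$, where $H := \pi_1(K)$.  If $H$ is closed in $G$ then both $K$ and $H$ are Lie subgroups of their ambient Lie groups and $\pi_1|_K : K \to H$ is a continuous bijective group homomorphism; standard Lie theory (continuous homomorphisms of Lie groups are smooth, and a smooth bijective homomorphism of Lie groups of equal dimension is a diffeomorphism) gives that $\pi_1|_K$ is a diffeomorphism, so $\phi$ is continuous and the first alternative holds.  If $H$ is not closed in $G$, take $G' := \overline{H}$.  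Pick $g \in \overline{H}\setminus H$ and $h_n \in H$ with $h_n \to g$; closedness of $K$ prevents $(h_n,\phi(h_n))$ from accumulating to any $(g,t)$, so $|\phi(h_n)| \to \infty$.  For fixed $m$ and $n$ large, the ratios $h_m^{-1}h_n$ lie in a prescribed neighborhood $V$ of $e$ while $\phi(h_m^{-1}h_n) = \phi(h_n) - \phi(h_m)$ is unbounded.  Thus $\Phi_V := \phi(H \cap V)$ is an unbounded subgroup of $\R$, so either dense in $\R$ or of the form $c_V\Z$.  In either case, given any target $(g,t) \in G'\times\R$, density of $H$ in $\overline{H}$ produces $h_0 \in H$ within $\varepsilon$ of $g$, and we can find $h \in H \cap V$ with $\phi(h)$ within $c_V/2$ (resp.\ $\varepsilon$) of $t - \phi(h_0)$; then $(h_0h,\phi(h_0)+\phi(h)) \in K$ is within $\mathrm{diam}(V) + \varepsilon + c_V/2$ of $(g,t)$, a uniform bound.

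The principal obstacle is the final step: converting non-closedness of $H$ --- a statement about one sequence failing to converge --- into a uniform Hausdorff bound against the whole group $\overline{H}\times\R$.  The mechanism is the homomorphism property of $\phi$, which propagates the instability at the single limit point $g$ to every neighborhood of the identity in $H$ and yields a fixed subgroup $\Phi_V$ of $\phi$-values attainable in a bounded piece of $H$.  Uniformity of the Hausdorff bound then comes from fixing $V$ once and for all.
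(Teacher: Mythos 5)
Your organizing case analysis on $K_0 = K \cap (\{e\}\times\R)$ is different from the paper's (which first passes to the normalizer of $K$ to reduce to $K$ normal, and then quotients by $\ker\phi$ to reduce to abelian $G$), and the first two cases as well as the closed-$H$ subcase are essentially fine (though continuity of $\phi$ for $H$ closed is more safely justified by the open mapping theorem for $\sigma$-compact locally compact groups than by an equal-dimension count, which itself needs an argument). However, there is a genuine gap in the main case, $H$ not closed: the set $\Phi_V = \phi(H\cap V)$ is \emph{not} a subgroup of $\R$, because $H\cap V$ is not a subgroup of $H$ --- it is only a symmetric subset containing $e$. What you have actually established is that $\Phi_V$ contains $0$, is symmetric, and is unbounded, and that is not enough to conclude coarse density in $\R$: a symmetric unbounded set containing $0$ can have arbitrarily large gaps, e.g.\ $\{0\}\cup\{\pm 2^k\}$. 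Coarse density of $\phi(H\cap V)$, with constant depending only on $V$, is exactly what your final estimate needs in order to put a point of $K$ within uniformly bounded distance of an arbitrary $(g,t)\in \overline{H}\times\R$. Replacing $\Phi_V$ by the subgroup it generates restores coarse density but destroys the uniformity in the first coordinate: a word in $H\cap V$ realizing a prescribed $\phi$-value may only lie in $V^n$ with $n$ unbounded.

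Closing this gap is the real content of the lemma. Already for $K$ a lattice in $\R\times\R$ projecting to a dense non-closed subgroup $H$ of $G=\R$ (for instance $K=\{(m+n\sqrt2,\ m-n\sqrt2): m,n\in\Z\}$), the statement you need --- that the $\phi$-values of the points of $K$ lying over the compact set $V$ are coarsely dense in $\R$ --- is a pigeonhole/Minkowski fact about lattice points in a strip; for $K=\{(e^{i\alpha s},e^{i\beta s},s)\}\le T^2\times\R$ it is a recurrence (syndeticity) statement for an irrational flow on the torus. Some compactness or structure-theoretic input of this kind must enter --- this is what the paper's reduction to the normal, abelian case is supplying, however tersely --- and the homomorphism property of $\phi$ alone does not upgrade unboundedness of $\Phi_V$ to coarse density.
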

 
 \begin{proof}
  
 Since the normalizer of $K$ is closed in $G \times \R$, and has the form
 $G' \times \R$, we can reduce to the case of $K$ normal in $G \times \R$. 
 
 If $K$ intersects $\R$ non-trivially, then the subgroup of $K$ generated
 by that intersection and the intersection of $K$ with $G$ is cocompact in
 $K$, at hence at finite Hausdorff distance from it, and is clearly
 either of the form $H \times \R$ or the Hausdorff equivalent $H \times
 \Z$.
 
 If $K$ intersects $\R$ trivially, then $K$ is the graph of a homomorphism
 to $\R$ of a subgroup $K'$ of $G$.  The kernel of this homomorphism
 $K''$, which is $K$ intersect $G$, is a closed normal subgroup of $G$. 
 $K$ is then the preimage of a closed subgroup of $(G/K'') \times \R$,
 which is the graph of an injective homomorphism $K'/K'' \to \R$, which
 is clearly abelian.  Thus the lemma reduces to the case of $G$ abelian,
 which is straight forward.  Note, however, that both cases of the
 conclusion of the lemma can occur, even when $K$ intersects $\R$ 
 trivially.
 
 \end{proof}
 
 Thus, to understand all subgroups of $GL_n \R$ up to Hausdorff
 equivalence, one needs to understand the closed subgroups of $SL_n \R$
 and their homomorphisms to $\R$.   The problem of understanding subgroups
 of $SL_n(\R)$ up to Hausdorff equivalence has a nice geometric
 interpretation:  Let $X=SL_n\R / SO_n\R$.  Two subgroups of $SL_n \R$ are
 at finite Hausdorff distance if and only if their orbits in $X$ are at
 finite Hausdorff distance.  Since $X$ is a nice nonpositively curved
 space, one ought to be able to approach the question geometrically,
 perhaps using some appropriate boundary of $X$.  
 
 One case where one can use the symmetric space technology is when the
 subgroup is a non-uniform lattice.
   
 \begin{lemma}\label{ratner} If $\Gamma \in G$ is a
 non-uniform lattice then any group at finite Hausdorff distance from
 $\Gamma$ is commensurable to it.
 \end{lemma}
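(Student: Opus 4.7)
The plan breaks into three parts: showing that $H$ is discrete, then that $H$ is a lattice, and finally that $H$ is commensurable with $\Gamma$. Throughout, fix a left-invariant metric on $G$ and write the Hausdorff-distance hypothesis as $H \subset \Gamma \cdot B_R$ and $\Gamma \subset H \cdot B_R$ for a suitable $R$, where $B_R$ is the $R$-ball around the identity.

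For discreteness, the idea is to work in $\Gamma \backslash G$, where the image of the closure $\overline{H}$ lies in the bounded image of $B_R$. If $\overline{H}$ had a nontrivial connected identity component $L$, then the right $L$-action on $\Gamma \backslash G$ would have a bounded orbit through the identity coset. After reducing by the Jordan decomposition of $L$ to a subgroup generated by unipotents, Ratner's orbit-closure theorem would produce a closed intermediate subgroup $M \supset L$ with $M \cdot \Gamma / \Gamma$ compact, hence $M \cap \Gamma$ a uniform lattice in $M$. Combined with the Zariski density of $\Gamma$ in $G$ (Borel density) and the non-uniformity of $\Gamma$, this rules out nontrivial $L$ and forces $H$ to be discrete.

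That $H$ is a lattice then follows easily: if $F$ is a finite-volume fundamental domain for $\Gamma$ in $G$, then $\Gamma \subset H \cdot B_R$ gives $G = \Gamma F \subset H \cdot B_R F$, exhibiting $B_R F$ as a finite-Haar-measure fundamental set for the discrete group $H$.

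For commensurability, $H$ and $\Gamma$ are now both lattices in $G$ with orbits at bounded Hausdorff distance in $X = G/K$. The cusps of $\Gamma \backslash X$ must be coarsely tracked by cusps of $H \backslash X$; in particular, each maximal unipotent subgroup of $\Gamma$, sitting inside a horospherical subgroup $N$ of a parabolic of $G$, must be shadowed by a unipotent subgroup of $H$ inside the same $N$ that is commensurable with it. Combining this matching of cusp data with the arithmeticity of non-uniform lattices (Margulis arithmeticity, applicable in the range of $G$ considered in the paper), $H$ is forced to lie in $\mathrm{Comm}_G(\Gamma)$ and to be commensurable with $\Gamma$. The main obstacle is the discreteness step: excluding non-discrete closures of $H$ really does require the Ratner-style rigidity of unipotent dynamics on $\Gamma \backslash G$, whereas once discreteness is in hand the remaining steps reduce to routine lattice theory plus arithmeticity.
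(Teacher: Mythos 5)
Your first step (discreteness via unipotent dynamics on $\Gamma\backslash G$) is in the same spirit as the paper, which invokes Shah's generalization of Ratner's theorem to produce a closed subgroup $M\supseteq H$ with compact orbit of the identity coset in $G/\Gamma$ and then argues $M$ must be discrete because the only non-discrete closed subgroup containing a non-uniform lattice is $G$ itself. But your final step is where the argument genuinely breaks. First, Margulis arithmeticity is not available in the generality of the lemma: the paper applies this statement to non-uniform lattices in symmetric spaces other than $\hyp^2$, and in particular contemplates rank-one situations (it explicitly discusses rank one with a single prime at the end of Section 3), where non-arithmetic non-uniform lattices exist (e.g.\ cusped non-arithmetic hyperbolic $3$-manifold groups). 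Second, even where arithmeticity applies, concluding that $H$ lies in $\mathrm{Comm}_G(\Gamma)$ does not give commensurability with $\Gamma$ --- the commensurator is much larger than the commensurability class (for $SL_n(\Z)$ it is $GL_n(\Q)$), and discreteness plus finite Hausdorff distance does not by itself force commensurability either ($\Z$ and $\sqrt{2}\,\Z$ in $\R$ are discrete and at finite Hausdorff distance but share no common finite-index subgroup). The ``cusps coarsely track cusps'' step is also only a heuristic as written; nothing in it pins down a common finite-index subgroup.

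The paper's route avoids all of this machinery: once the closed subgroup $M\supseteq H$ furnished by the Ratner/Shah step is known to be discrete, its orbit of the identity coset in $G/\Gamma$ is simultaneously compact and discrete, hence \emph{finite}. A finite orbit means the stabilizer $H\cap\Gamma$ has finite index in $H$; and then $H\cap\Gamma$ is at finite Hausdorff distance from $\Gamma$, so (using discreteness of $\Gamma$, which makes $\Gamma\cap B_R$ finite) it has finite index in $\Gamma$ as well. That yields commensurability directly, with no appeal to arithmeticity, cusp geometry, or your intermediate lattice-ness step. You should replace your third part with this finiteness-of-orbit argument; your second part then becomes unnecessary.
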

 
 \begin{proof}
 
 Let the lattices be $\Gamma$ and $\Gamma'$.  The fact that $\Gamma$ is at
 finite Hausdorff distance from $\Gamma'$ implies that the $\Gamma'$ orbit
 of the identity coset of $X/\Gamma$, is bounded.  By the
 generalization of Ratner's theorem in \cite{shah}, this implies that
 there is a closed subgroup of $G$ containing $\Gamma'$ with
 compact orbit of the identity coset in $X/\Gamma$.  Since the only closed
 non-discrete subgroup which contains a non-uniform lattice in all of
 $SL_n(\R)$, this subgroup must be discrete.  This implies that its
 orbit, and hence that of $\Gamma'$ in $X/\Gamma$ is finite, which
 implies that $\Gamma$ and $\Gamma'$ are commensurable. 
 
 \end{proof}
 
\begin{remark}  At least for $\Gamma=GL_n(\Z)$ this can be done without appealing to Ratner (see lemma \ref{ratnerlite})
\end{remark}
 
 \begin{conjecture}\label{zariski} The lemma holds for any discrete Zariski
 dense subgroup which is not cocompact.
 \end{conjecture}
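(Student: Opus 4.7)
The approach would closely parallel the proof of Lemma \ref{ratner}, with all the work concentrated in replacing the non-uniform lattice input by something weaker that still supports a Ratner-type conclusion. Fix $\Gamma$ discrete, Zariski dense, not cocompact in $G$ (taken semisimple without compact factors for clarity) and let $\Gamma'$ be at finite Hausdorff distance from $\Gamma$. As before, the Hausdorff hypothesis means the $\Gamma'$-orbit of the identity coset in $G/\Gamma$ has bounded closure, i.e.\ is contained in a compact set $K\Gamma/\Gamma$.

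The first step would be to upgrade this bounded orbit to a homogeneous one: produce a closed subgroup $H \subseteq G$ containing $\Gamma'$ such that $\overline{H\cdot e\Gamma}$ is compact and $H$-homogeneous. For lattices this is Shah's generalization of Ratner's theorem, as used in Lemma \ref{ratner}, but for a general discrete Zariski-dense $\Gamma$ the quotient $G/\Gamma$ has infinite volume and the classical Ratner package does not apply. One would have to import a ``thin group'' orbit-closure / equidistribution result of the Mohammadi--Oh type (available in rank one via Patterson--Sullivan theory) or a suitable higher-rank analogue. The precise statement one would want as input is that any closed subgroup of $G$ whose orbit in $G/\Gamma$ has compact closure has in fact a homogeneous orbit closure there.

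Granting such an $H$, the next step is to force $H$ to be discrete. First one must show $\Gamma'$ itself is Zariski dense. This is \emph{not} automatic from finite Hausdorff distance, since any Euclidean ball is already Zariski dense in $G$; however, since $\Gamma'\subseteq \Gamma\cdot B_R$ with $\Gamma$ \emph{discrete}, each element of $\Gamma'$ has a unique nearest $\gamma\in\Gamma$, and this produces a bounded ``cocycle'' $\Gamma'\to B_R$ through which algebraic relations in $\Gamma'$ push down to algebraic relations constraining Zariski-generic elements of $\Gamma$; Zariski density of $\Gamma$ then forces Zariski density of $\Gamma'$. With $\Gamma'$ Zariski dense and contained in $H$, the identity component $H^0$ is normalized by a Zariski-dense set and is therefore normal in $G$. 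In the semisimple no-compact-factors case $H^0$ is either trivial or all of $G$; the latter would make $\overline{H\cdot e\Gamma}=G/\Gamma$, which is noncompact, a contradiction. Hence $H$ is discrete.

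A discrete $H$ with compact orbit through $e\Gamma$ has $H/(H\cap\Gamma)$ finite, so $H$ and $\Gamma$ are commensurable; since $\Gamma'\subseteq H$ is itself coarsely dense in $\Gamma$, the finite-index containment $\Gamma'\subseteq H$ finishes the argument. The main obstacle is clearly the first step: no Ratner--Shah analogue is available in the generality the conjecture claims, and outside rank one this is a genuinely open dynamical problem. The most plausible route to a full proof is therefore to first establish the conjecture for $\Gamma$ geometrically finite in a rank-one Lie group, where the geometry of the convex core of $X/\Gamma$ and the dynamics of the frame flow on the non-wandering set are understood well enough to run the orbit-closure machinery, and then to address higher-rank thin groups as those tools mature.
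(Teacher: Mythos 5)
You should first note that the paper offers no proof of this statement to compare against: it is explicitly labelled a conjecture, and the author remarks a few lines later that ``even here the conjecture seems to be wide open.'' So the honest assessment is that your text is a program, not a proof, and to your credit you say so yourself. Your outline does faithfully mirror the strategy of Lemma \ref{ratner} (bounded $\Gamma'$-orbit in $G/\Gamma$, upgrade to a homogeneous orbit closure, force the intermediate group to be discrete, conclude commensurability), and you correctly locate the essential obstruction: Shah's theorem, which drives Lemma \ref{ratner}, lives on finite-volume homogeneous spaces, and no Ratner--Shah package exists for infinite-volume quotients by thin groups in the generality the conjecture demands. Identifying that as the load-bearing missing ingredient is exactly right.

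Two of your intermediate steps are also weaker than you present them. First, the Zariski density of $\Gamma'$: your ``bounded cocycle $\Gamma' \to B_R$'' argument is not an argument --- nearest-point projection to $\Gamma$ is not a homomorphism, and bounded perturbations do not transport algebraic relations, so you have not shown that the Zariski closure of $\Gamma'$ is all of $G$ (and without that, the claim that $H^0$ is normal in $G$ collapses). Second, the endgame: from $[H : H\cap\Gamma] < \infty$ and $\Gamma' \subseteq H$ you get that $\Gamma'\cap\Gamma$ has finite index in $\Gamma'$, but commensurability also requires $\Gamma'\cap\Gamma$ to have finite index in $\Gamma$, which in the infinite-covolume setting does not follow from finiteness of an orbit the way it does for lattices; you would need to use the coarse density of $\Gamma'$ in $\Gamma$ more seriously here. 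Neither gap is fatal to the program, but both would need genuine arguments, and the central dynamical input remains, as you say, open outside special (e.g.\ rank-one, geometrically finite) cases.
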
 
 
 If true, this should essentially allow one to give a complete
 classification when combined with \cite{Witte}.
 
 A special case of relevance to this paper is free subgroups of
 $GL_n(\Z)$, as these control the groups in the proper holonomy case,
 both up to quasi-isometry and commensurability.    Even here the conjecture seems to be wide open.
 
 \subsection{The case $n=2$}
 
 We can give a complete quasi-isometry classification of graphs of
 $\Z^2$s.  To start, as in the previous section, we need the Hausdorff
 classification of subgroups.
 
 Let $H$ be a closed subgroup of $SL_2 (\R)$.  
 
 If $H$ has dimension at least 2, then $H$ acts cocompactly on the
 hyperbolic plane, and therefore is at finite Hausdorff distance from
 $SL_2 \R$.  If $H$ is one dimensional, then up to conjugacy, there are
 three possibilities for the identity component: elliptic, hyperbolic, or
 parabolic one parameter subgroups.  If is easy to see that, in any of the
 three cases, if $H$ has infinitely many components then it is cocompact
 and therefore again at finite Hausdorff distance from $SL_2 \R$.  The
 hyperbolic and parabolic one parameter subgroups are distinct, and the
 elliptic case is at finite Hausdorff distance from the trivial group.
 
 Thus we are reduced to the discrete case.  Since we are concerned only
 with finitely generated groups, and we are in dimension two, these are
 geometrically finite Fuchsian groups.   The limit set in the circle
 , up to the action of $PSL_2 \R$, is an invariant of the bounded
 Hausdorff class.  If the limit set is finite then the subgroup is at
 bounded Hausdorff distance from one of the one parameter subgroups
 discussed above.  Fuchsian groups with infinite limit set are of
 two kinds: the limit set is the entire circle, or is a Cantor set.  In
 either case, because $H$ is finitely generated, it is acts with
 cofinite volume the hull of its limit set.
 
 For groups with limit set a Cantor set $C$, the subgroup $H'$ of
 $PSL_2(\R)$  which preserves $C$ is discrete.  As $H$ is
 contained in $H'$, and acts with cofinite volume on the convex hull of
 $C$, $H$ has finite index in $H'$.  Thus the limit set is a complete
 invariant of the bounded Hausdorff class, and any two groups in this
 Hausdorff class are commensurable.
 
 Fuchsian groups with limit set $S^1$ are either cocompact or cofinite
 volume.  The cocompact groups, discrete or otherwise, form a single
 quasi-isometry class.   The cofinite volume case is handled by lemma
 \ref{ratner}.
 
 By lemma \ref{prod} a closed subgroup of $GL_2(\R)$ is either the graph
 of a homomorphism from a closed subgroup of $SL_2(\R)$ to $\R$ or
 Hausdorff equivalent to a product of a closed subgroup of $SL_2(\R)$ and
 $\R$.
 
 We therefore need to know, among the Hausdorff classes of closed subgroups
 of $SL_2(\R)$, which have representatives with homomorphisms to $\R$, and
 what those are.
 
 Each one parameter subgroup has a one dimensional family of homomorphisms
 to $\R$, and any Hausdorff equivalent group has at most this family.
 
 The discrete groups with Cantor limit set have many homomorphisms to
 $\R$.  Graphs of two such homomorphisms are Hausdorff close only if the 
 groups in $SL_2(\R)$ are commensurable and the homomorphisms agree on
 the common finite index subgroup.  Thus the graphs are also commensurable.
 
 Among the groups Hausdorff equivalent to $SL_2(\R)$, the only two kinds
 with maps to $\R$ are the upper triangular group, which has a one
 dimensional family of maps, and the cocompact discrete groups, which
 have many.  Thus we need to know when the graphs of such homomorphisms are
 Hausdorff close.
 
 \begin{lemma} Let $G$ and $G'$ be discrete cocompact subgroups of
 $SL_2(\R)$, and $f:G \to \R$ and $f':G' \to \R$ be non-trivial
 homomorphisms to $\R$.  If the graphs of $f$ and $f'$ are Hausdorff
 conjugate in $GL_2(\R)$ then they are commensurable.
 \end{lemma}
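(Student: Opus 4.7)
The plan is to conjugate inside $GL_2(\R)$ so that $\Gamma_f$ and $\Gamma_{f'}$ sit at finite Hausdorff distance directly inside $SL_2(\R)\times\R$ (conjugation in $GL_2(\R)$ is trivial on the central $\R$-factor), and then extract rigidity by lifting to the universal cover. Since $\hyp^2$ is simply connected, $f$ and $f'$ have primitives $\tilde f,\tilde f'\colon\hyp^2\to\R$ with $\tilde f(gx)-\tilde f(x)=f(g)$ for $g\in G$ and analogously for $\tilde f'$ and $G'$; because $d\tilde f$ and $d\tilde f'$ descend to closed $1$-forms on the compact surfaces $G\backslash\hyp^2$ and $G'\backslash\hyp^2$, both primitives are uniformly Lipschitz on $\hyp^2$.

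The Hausdorff closeness of the graphs pushes to Hausdorff closeness of the orbits through $(x_0,0)\in\hyp^2\times\R$. Given $x\in\hyp^2$, pick $y=gx_0\in G\cdot x_0$ within bounded distance of $x$ and then $y'=g'x_0\in G'\cdot x_0$ within bounded distance of $y$ with $|\tilde f(y)-\tilde f'(y')|$ bounded; combining this with the Lipschitz estimates forces $\tilde f-\tilde f'$ to be uniformly bounded on all of $\hyp^2$. Hence for every $g\in G\cap G'$,
\[
|f(g)-f'(g)| \;=\; |(\tilde f-\tilde f')(gx)-(\tilde f-\tilde f')(x)| \;\leq\; 2\|\tilde f-\tilde f'\|_\infty,
\]
and as $f-f'$ restricted to $G\cap G'$ is a homomorphism to $\R$, boundedness forces it to vanish. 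Thus $f=f'$ on $G\cap G'$ and $\Gamma_f\cap\Gamma_{f'}=\{(g,f(g)):g\in G\cap G'\}$, so the proof reduces to showing that $G\cap G'$ has finite index in both $G$ and $G'$.

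The main obstacle is this last step, because Hausdorff closeness of cocompact lattices in $SL_2(\R)$ is automatic and carries no commensurability information by itself; the argument must genuinely use the cocycle data. The plan is to exploit the approximate matching $\Phi\colon G\to G'$ sending $g$ to an element $\Phi(g)\in G'$ with $d(g,\Phi(g))$ and $|f(g)-f'(\Phi(g))|$ uniformly bounded; discreteness of $G'$ together with the cocycle constraint pins $\Phi(g)$ down up to a finite ambiguity. For a primitive hyperbolic $g_0\in G$, the iterates $\Phi(g_0^n)$ then lie in a bounded tube about the axis of $g_0$ in $\hyp^2$ and have $f'$-values close to $nf(g_0)$; combining the bounded-primitive estimate along the axis with the non-triviality of $f$ and $f'$, the rigidity of Fuchsian dynamics on $\hyp^2$ forces some power $g_0^n$ into $G'$. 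Running this over a generating family of hyperbolic elements of $G$ whose axes fill $\hyp^2$ then produces a finite-index subgroup of $G$ contained in $G'$, and together with the identity $f=f'$ on $G\cap G'$ established above this yields the claimed commensurability of the graphs.
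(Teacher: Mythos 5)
Your first two paragraphs are sound and run parallel to the opening of the paper's argument: the Lipschitz primitives $\tilde f,\tilde f'$ on $\hyp^2$ exist, Hausdorff closeness of the graphs does force $\tilde f-\tilde f'$ to be uniformly bounded, and consequently $f=f'$ on $G\cap G'$. But the entire content of the lemma is the step you defer to the end, that $G\cap G'$ has finite index in both groups, and what you offer there is a plan rather than a proof. Being within bounded distance of an element of the discrete group $G'$ never forces an element of $G$ (or any power of it) to lie in $G'$; ``the rigidity of Fuchsian dynamics'' is not a citable mechanism; and even granting that every primitive hyperbolic $g_0\in G$ had a power in $G'$, the subgroup generated by such powers over a generating family has no reason to have finite index in $G$. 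A structural warning sign is that the non-triviality of $f$ and $f'$ must be used in an essential way --- for $f=f'=0$ the graphs are just $G\times\{0\}$ and $G'\times\{0\}$, which are automatically Hausdorff close for cocompact lattices without being commensurable --- yet your outline never exhibits where non-triviality actually does any work.

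The paper proves the contrapositive, and the missing idea is a density statement: if $G$ and $G'$ are \emph{not} commensurable, then the set of products $GG'$ (more precisely, the group the two lattices generate) is dense in $SL_2(\R)$, since a non-discrete subgroup containing a cocompact lattice is dense, while a discrete one would exhibit the two lattices as commensurable. Taking a single continuous function $F$ on $SL_2(\R)$ that is simultaneously an approximate primitive for $f$ over $G$ and for $f'$ over $G'$ (your $\tilde f$ serves, by your second paragraph), the two approximate cocycle identities combine to show that for $x=gg'$ the displacement $F(xy)-F(y)$ is independent of $y$ up to a uniform error; by continuity this control propagates from the dense set $GG'$ to all of $SL_2(\R)$, and transitivity of left translation then forces $F$, hence $f$ and $f'$, to be bounded and therefore identically zero, contradicting non-triviality. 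That density-plus-approximate-invariance argument is precisely where non-commensurability and non-triviality interact, and without something playing its role your proposal does not establish the lemma.
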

 \begin{proof}
 
 Assume $G$ and $G'$ are not commensurable.
 
 The graphs of $f$ and $f'$ are Hausdorff close if and only if for any $r$
 there is an $s$ so that for all $g \in G$ and $g' \in G'$ with
 $d(g,g')<r$ one has $|f(g)-f'(g')|<s$.  This implies that there is a
 continuous function $F:SL_2(\R) \to \R$ which is at bounded distance from
 both $f$ and $f'$.
 
 For any element $x=gg' \in GG'$ we have that, for any $y$, $$|F(xy)-F(y)|
 = |F(g(g'y)) - F(g'y) + F(g'y) - F(y)|$$
 $$ \leq |F(g(g'y))-F(g'y)| + |F(g'y)-F(y)|$$ 
 
 Thus $|F(xy)-F(y)|\leq M$ for some $M$ which is independent of $x$ and
 $y$.  Since the set of $x$ with this property is closed, and $GG'$ is
 dense, this is true for all $x$ and $y$.
 
 This implies $F$ is bounded, since the $SL_2(\R)$ action on itself by
 left translation is transitive.  This implies both $f$ and $f'$ are
 bounded, and hence are both identically zero.
 
 \end{proof}
 
 In the same way one checks that the graphs of the homomorphisms from the
 upper triangular subgroup are not Hausdorff close to those from cocompact
 lattices.
 
 From this one can write down a complete (and long and tedious) list of
 the Hausdorff conjugacy classes of subgroups of $GL_2(\R)$ and thereby a list of the quasi-isometry types of graphs of $\Z^2$s.

\subsection{Commensurability of Free-by-$\Z^n$ groups}

To compare with the quasi-isometry classification we also want to determine the commensurability classification of graphs of $\Z^n$s.    Even for $n=1$ this runs into difficulties in general.    However, in the semi-direct product case this can be done.

 \begin{theorem}\label{commnielsen} Let $G$ be a finitely generated group,
 and let
 $f:F
 \to G$ and $f':F' \to G$ be homomorphisms from non-abelian free groups
 with finite index images.  Unless one of $f$ and $f'$ is injective while
 the other is not, there are finite index subgroups $F_1 \subset F$ and
 $F'_1
 \subset F'$ and an isomorphism $\phi:F_1 \to F'_1$ which commutes with the
 restrictions of $f$ and $f'$.
 \end{theorem}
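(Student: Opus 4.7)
The plan is to split on the injectivity status of $f$ and $f'$. Let $r, r'$ be the ranks of $F, F'$. By passing to the finite-index subgroups $f^{-1}(H), f'^{-1}(H)$ for $H := f(F) \cap f'(F')$, we may assume $f(F) = f'(F') = H$. If both $f, f'$ are injective, then the restrictions $f|_{F_1}, f'|_{F'_1}$ with $F_1 := f^{-1}(H), F'_1 := f'^{-1}(H)$ are isomorphisms onto $H$, and $\phi := (f'|_{F'_1})^{-1} \circ (f|_{F_1})$ is the desired isomorphism. The exclusion of the mixed case is necessary and motivates the split: if $f$ is injective but $f'$ is not, then for any finite-index $F'_1 \subset F'$ the intersection $\ker f' \cap F'_1$ is non-trivial (pigeonhole in the finite $F'/F'_1$ applied to the infinite $\ker f'$), so $\ker(f' \circ \phi)$ would be non-trivial for any $\phi$, contradicting $\ker(f|_{F_1}) = 1$.

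The remaining case has both $f, f'$ non-injective; split further on whether $H$ is finite. If $H$ is finite, then $\ker f, \ker f'$ are finite-index in $F, F'$ of ranks $|H|(r-1)+1$ and $|H|(r'-1)+1$ by Nielsen--Schreier. Both restrictions vanish on these subgroups, so the compatibility condition $f|_{F_1} = f'|_{F'_1} \circ \phi$ is automatic. Passing further to finite-index subgroups of $\ker f, \ker f'$ of indices $r'-1$ and $r-1$ respectively yields two free groups of matching rank $(r-1)(r'-1)|H|+1$; any isomorphism between them works.

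The hard case is $H$ infinite with both maps non-injective. Here $\ker f, \ker f'$ are non-trivial normal subgroups of infinite index in non-abelian free groups, hence are infinitely generated free groups. The plan is to choose a finite-index $H_1 \subset H$, pass to $f^{-1}(H_1), f'^{-1}(H_1)$, and then to further finite-index subgroups $F_1, F'_1$ of matching rank $n$ (chosen so the Schreier rank formulas agree). This reduces the problem to the following: given two surjections $\alpha, \alpha': F_n \twoheadrightarrow H_1$ with non-trivial kernels, find $\psi \in \mathrm{Aut}(F_n)$ with $\alpha' = \alpha \circ \psi$. This is a Nielsen-equivalence question for generating $n$-tuples of $H_1$, and is the technical heart of the proof. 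Given a target tuple $(k_1, \ldots, k_n)$ generating $H_1$ coming from a basis of $F_1$, we begin with arbitrary preimages $z_i^{(0)} \in F'_1$ of $k_i$ and modify them by kernel elements $\xi_i \in \ker f' \cap F'_1$ so that $(z_i^{(0)} \xi_i)$ generates $F'_1$; by the Hopf property of free groups it is then a free basis, and setting $\phi(y_i) := z_i^{(0)} \xi_i$ for a basis $y_i$ of $F_1$ with $f(y_i) = k_i$ gives the required isomorphism. The main obstacle is the existence of such $\xi_i$: the richness of the non-trivial kernel (an infinite-rank free group providing ample room for adjustment) is what permits this, but the detailed combinatorial argument --- possibly requiring further passes to deeper finite-index subgroups to stabilize the Nielsen-equivalence class --- is delicate.
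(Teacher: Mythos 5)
Your treatment of the easy cases (both maps injective; $H$ finite; the necessity of excluding the mixed case) is fine, but the case that carries all the content --- $H$ infinite with both kernels non-trivial --- is not actually proved. You reduce it to the assertion that two surjections $\alpha,\alpha'\colon F_n \to H_1$ with non-trivial kernels differ by an automorphism of $F_n$, i.e.\ to Nielsen equivalence of generating $n$-tuples of the group $H_1$, and you explicitly leave the construction of the correcting elements $\xi_i$ open. This is not a routine gap one can fill in: for an arbitrary finitely generated $H_1$ no such theorem is available. Minimal generating tuples of groups (even finite ones) can fail to be Nielsen equivalent, and the redundant case is a well-known open problem, settled only in special situations such as finite solvable groups. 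Nothing in your setup forces the adjusted tuple $(z_i^{(0)}\xi_i)$ to generate all of $F'_1$ rather than a proper subgroup still surjecting onto $H_1$, and the ``richness'' of the infinitely generated kernel does not by itself produce the $\xi_i$. In effect you have reduced the theorem to a statement that is essentially equivalent to it, and then asserted that statement.

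The paper's proof sidesteps exactly this trap by never working with generating tuples of $H_1$ at all. Since $F$ is free and $f'$ is onto, $f$ lifts to $\hat f\colon F\to F'$ with $f=f'\hat f$. The classical structure theorem for epimorphisms of free groups (every surjection $F_n\to F_m$ is equivalent to the standard projection) splits $F$ as $F_0*K$ with $K$ carried isomorphically onto the image $F'_0=\hat f(F)$ and $F_0$ lying in $\ker f$; Marshall Hall's theorem then realizes $F'_0$ as a free factor $F'_0*H$ of a finite-index subgroup of $F'$, and surjectivity of $f'|_{F'_0}$ lets one push the complement $H$ into $\ker f'$. Both maps to $G$ now factor through retractions onto the \emph{common free factor} $F'_0$, so the matching problem becomes: find finite-index subgroups of $A*C$ and $B*C$ isomorphic over the retraction to $C$. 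There the only Nielsen-equivalence input needed is for surjections between \emph{free} groups, where it is a theorem. If you want to salvage your outline, the essential missing step is this lift of $f$ through $f'$, which replaces the target $H_1$ by a free group before any Nielsen-type argument is invoked.
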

 
 \begin{proof}
 
 We may assume, by passing to finite index subgroups, that $f$ and $f'$
 are both surjective.  The case of both maps injective is trivial, so we
 also assume neither map is injective.
 
 As $F$ is free, there is then a lift of $f$ to a map $\hat{f}:F \to F'$
 such that $f=f'\hat{f}$.  Let $F_0$ be the kernel of $\hat{f}$, and $F'_0$
 the image.
 
 By a theorem of Whitehead, $F$ splits as a free product $F_0 * K$ with
 $\hat{f}$ taking $K$ isomorphically to $F'_0$.  
 
 Since $F'_0$ is the image of a finitely generated group, it is finitely
 generated.  Now, by a theorem of Hall, there is a finite
 index subgroup $F'_1$ of $F'$ containing $F'_0$ which splits as a free
 product of $F'_0 * H$  for some finitely generated free $H$.  Since
 $f'(F'_0)=G$, we can take $H$ contained in the kernel of $f'$.
 
 We have an isomorphism of $F$ with $F'_0 * K$ which commutes with
 the maps $f$ and $f'$ to $G$.  Likewise, there is a finite index
 subgroup of $F'$ with an isomorphism to $F'_0 *  H$ which commutes with
 the maps to $G$.
 
 Because we assume that the maps from $F$ and $F'$ to $G$ have kernel, we
 can pass to further finite index subgroups to guarantee that $H$ and $K$
 are non-trivial.  At this point things are reduced to:
 
 \begin{lemma}  Let $A$, $B$, and $C$ be non-trivial free groups.  There
 are finite index subgroups $D$ of $A*C$ and $D'$ of $B*C$ with
 isomorphisms that commute with the maps to $C$. 
 \end{lemma}
 
 By \cite{whitehead} there is, up to isomorphism, only one surjection
 between free groups of any given ranks.  Thus the lemma is equivalent to
 showing that there are finite index subgroups $D$ and $D'$ of the same
 ranks which both surject to $C$.  It is an easy calculation to check that
 this occurs with $D$ and $D'$ the kernels of maps to finite groups which
 factor through $A$ and $B$ respectively.
 
 \end{proof}
 
 \begin{corollary}\label{semihol} If $G=H \rtimes
 _f F$ and $G'=H \rtimes
 _f F'$
 then $G$ and $G'$ are fiber respecting commensurable iff
 either both or neither of $f$ and $f'$ are injective, and $f(F)$ and
 $f'(F')$ are commensurable in $Comm(H)$.
 \end{corollary}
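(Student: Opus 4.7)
For the forward direction, I would unpack what fiber respecting commensurability provides: an isomorphism $\phi: G_1 \to G'_1$ between finite index subgroups that sends $H \cap G_1$ onto $H \cap G'_1$. The restriction of $\phi$ to these intersections is an isomorphism between finite index subgroups of $H$, so it represents an element $\alpha \in Comm(H)$. Passing to quotients, $\phi$ descends to an isomorphism $\bar\phi: F_1 \to F'_1$ between finite index subgroups of $F$ and $F'$. Writing out that $\phi$ respects the semidirect product multiplication translates into the identity $\alpha^{-1} f'(\bar\phi(x))\alpha = f(x)$ inside $Comm(H)$ for all $x \in F_1$. From this I read off two things: first, $f(F_1)$ and $f'(F'_1)$ are conjugate in $Comm(H)$, so $f(F)$ and $f'(F')$ are commensurable there; second, $\ker(f|_{F_1}) = \bar\phi^{-1}(\ker(f'|_{F'_1}))$. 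Since $F$ is free, $\ker f$ is either trivial or infinite (every nontrivial normal subgroup of a free group is infinite), and it meets the finite index subgroup $F_1$ in a subgroup of finite index in itself; hence $f|_{F_1}$ injective forces $f$ injective. The same holds for $f'$, so $f$ and $f'$ are injective simultaneously or not at all.

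For the reverse direction, I would start by using the commensurability hypothesis to pass to finite index subgroups $F_2 \leq F$ and $F'_2 \leq F'$ with $f(F_2) = f'(F'_2) =: Q$, so that $f|_{F_2}$ and $f'|_{F'_2}$ are both surjections onto $Q$. The injectivity hypothesis on $f$ and $f'$ transfers to $f|_{F_2}$ and $f'|_{F'_2}$ by the free group kernel argument above, so Theorem \ref{commnielsen} applies and yields finite index subgroups $F_3 \leq F_2$, $F'_3 \leq F'_2$ together with an isomorphism $\psi: F_3 \to F'_3$ satisfying $f' \circ \psi = f|_{F_3}$. To convert $\psi$ into a fiber respecting commensurability of the groups, I would then select a finite index subgroup $H_0 \leq H$ on which every element of the common image $f(F_3)$ acts by an honest automorphism: this is possible because $F_3$ is finitely generated, so a finite intersection of the domains of isomorphisms representing a generating set suffices. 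Then $H_0 \rtimes_f F_3$ is a finite index subgroup of $G$, $H_0 \rtimes_{f'} F'_3$ is a finite index subgroup of $G'$, and the map $(h,g) \mapsto (h, \psi(g))$ is a group isomorphism between them which commutes with the projections to the free quotient, giving the required fiber respecting commensurability.

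The main obstacle is the appeal to Theorem \ref{commnielsen}, whose hypothesis that not exactly one of $f, f'$ is injective is precisely the neither/both-injective condition in the corollary, with no slack: without it the theorem genuinely fails. A secondary technicality is arranging $H_0$ so that the commensurator-level equality $f = f'\circ \psi$ lifts to an honest equality of automorphisms of $H_0$, which requires shrinking $H_0$ once more so that the two partial automorphism representatives of each element agree on it; this is routine since they already agree as elements of $Comm(H)$.
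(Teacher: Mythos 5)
Your proposal is correct and takes essentially the same route as the paper: the forward direction by direct unpacking (which the paper simply declares trivial), and the reverse direction via Theorem \ref{commnielsen} followed by realizing the relevant element of $Comm(H)$ as an isomorphism between finite index subgroups of $H$. The one cosmetic difference is that the paper keeps the conjugating element $x \in Comm(H)$ explicit and ends with the map $(h,w) \mapsto (x(h), T(w))$ between $H_1 \rtimes_f F_1$ and $H'_1 \rtimes_{f'} F'_1$, so in the case where the images are only conjugate-commensurable your final isomorphism should likewise apply $x$ in the first coordinate rather than leaving it as the identity.
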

 
 \begin{proof}
 The "only if" direction is trivial.  Assume that $f$ and $f'$ are as in
 the corollary.  By the lemma there are finite index subgroups $F_1$ of $F$
 and $F_1'$ of $F'$, an isomorphism, $T$, between them, and an element
 $x$ of $Comm(H)$ so that 
 $$x^{-1} f(w) x = f'( Tw)$$
 for all $w \in F_1$.
 
 After realizing $x$ by an isomorphism between two finite index
 subgroups of $H_1$ and $H'_1$, this is exactly an isomorphism
 between $H_1 \rtimes
 _f F_1$ and $H'_1 \rtimes
 _{f'} F'_1$.     
 \end{proof}
 
 As with quasi-isometries, when $H$ has Poincare duality (and, in particular, for $H=\Z^n$) all commensurators are fiber preserving (unless $F=\Z$).
 
 \begin{corollary}\label{abcomm} The groups of
 the form $\Z^n \rtimes
 _f F$ fall into two classes depending on whether the
 action of $F$ is faithful.   These classes are closed under
 commensurability, and within each class the commensurability
 classes are in one-to-one correspondence, given by the image of $f$, with
 commensurability classes within $GL_n(\Q)$ of finitely generated subgroups
 of $GL_n(\Z)$.   
 \end{corollary}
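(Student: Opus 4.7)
The plan is to apply Corollary \ref{semihol} with $H = \Z^n$. The remark immediately following that corollary asserts that when $H$ has Poincare duality and $F$ is non-abelian free, every commensurator between two groups of the form $H \rtimes F$ is automatically fiber-preserving. Since $\Z^n$ has Poincare duality and $Comm(\Z^n) = GL_n(\Q)$, Corollary \ref{semihol} will then directly classify commensurability (not merely fiber-respecting commensurability) by the two conditions (a) $f$ and $f'$ have matching injectivity status, and (b) $f(F)$ and $f'(F')$ are commensurable inside $GL_n(\Q)$.

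With this reduction in hand, the rest is bookkeeping. First I would verify that the faithful/non-faithful dichotomy is preserved under commensurability: passing to finite-index subgroups replaces $F$ by a finite-index subgroup $F_0 \leq F$, and since $f|_{F_0}$ has kernel $\ker(f) \cap F_0$, it is injective iff $f$ is; thus each of the two classes is closed under commensurability. Second, within each class Corollary \ref{semihol} gives a well-defined injection from commensurability classes of groups to commensurability classes of finitely generated subgroups of $GL_n(\Z)$ inside $GL_n(\Q)$, via $[G] \mapsto [f(F)]$. For surjectivity, given any finitely generated $\Gamma \leq GL_n(\Z)$ one realizes it as $f(F)$ by picking a surjection $F \twoheadrightarrow \Gamma$ from a free group of suitable rank; arranging $f$ faithful or non-faithful is simply a matter of choosing the rank of $F$ (taking $F$ of larger rank than necessary forces a non-trivial kernel).

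The main anticipated obstacle is the excluded case $F = \Z$ in the Poincare duality remark, where the reduction to fiber-preserving commensurability is not available off the shelf. For $F = \Z$ one studies $\Z^n \rtimes_\phi \Z$, and the dichotomy splits into $\phi$ of infinite order (faithful) versus $\phi$ of finite order (non-faithful, in which case the group is virtually $\Z^{n+1}$). The finite-order case collapses to the abelian classification and is straightforward; the infinite-order case must be checked directly or by invoking \cite{FMabc}, verifying that commensurability of $\Z^n \rtimes_\phi \Z$ and $\Z^n \rtimes_{\phi'} \Z$ corresponds to commensurability of $\langle \phi \rangle$ and $\langle \phi' \rangle$ in $GL_n(\Q)$, in line with the statement of the corollary.
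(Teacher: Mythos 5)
Your proposal is correct and follows the paper's own route exactly: the paper derives this corollary immediately from Corollary \ref{semihol} together with the remark that for $H$ with Poincar\'e duality (in particular $H=\Z^n$) all commensurators are fiber preserving when $F$ is non-abelian, which is precisely your reduction. Your extra bookkeeping (closure of the two classes under passage to finite index, surjectivity of the correspondence) and your discussion of the excluded $F=\Z$ case go slightly beyond what the paper writes down, but they are consistent with it.
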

 
 It is instructive to compare this with Theorem \ref{thm:graphofzn}.   One difference is that the parabolic case cannot occur here.   Another is that commensurability in $GL_n(\Q)$ here changes to bounded Hausdorff equivalence in $\glnr$ in that theorem.   Since the only holonomy groups which can occur here are subgroups of $\glnz$ the difference may not be that big - certainly for $n=2$ there is none.

\medskip
\noindent
Kevin Whyte\\
Dept. of Mathematics\\
University of Illinois at Chicago\\
Chicago, Il 60607\\
E-mail: kwhyte@math.uic.edu


\begin{thebibliography}{}

\bibitem[B]{benerdete}
Benardete, D, Topological equivalence of flows on homogeneous spaces, and divergence of one-parameter subgroups of Lie groups.  Trans. Amer. Math. Soc.  306  (1988),  no. 2, 499--527.

\bibitem[BG]{Geo}
Bieri, Robert; Geoghegan, Ross Connectivity properties of group actions on non-positively curved spaces.  Mem. Amer. Math. Soc.  161  (2003),  no. 765

\bibitem[BW]{CoarseHomology}
Block, Jonathan; Weinberger, Shmuel Large scale homology theories and geometry.  Geometric topology (Athens, GA, 1993),  522--569, AMS/IP Stud. Adv. Math., 2.1, Amer. Math. Soc., Providence, RI, 1997. 

\bibitem[BP]{hyperbolicbuilding}
Bourdon, Marc; Pajot, HervŽ Rigidity of quasi-isometries for some hyperbolic buildings.  Comment. Math. Helv.  75  (2000),  no. 4, 701--736.

\bibitem[E]{Eskin}
Eskin, Alex Quasi-isometric rigidity of nonuniform lattices in higher rank symmetric spaces.  J. Amer. Math. Soc.  11  (1998),  no. 2, 321--361.

\bibitem[EF]{EF}
Eskin, Alex; Farb, Benson, Quasi-flats and rigidity in higher rank
symmetric spaces. {\it J. Amer. Math. Soc. } 10  (1997),  no. 3,
653--692.

\bibitem[FM1]{FMbs1}
B. Farb and L. Mosher (appendix by D. Cooper), A rigidity theorem for
the solvable Baumslag-Solitar groups, Inventiones,
Vol. 131, No. 2 (1998), pp. 419-451.

\bibitem[FM2]{FMabc}
B. Farb and L. Mosher, On the asymptotic geometry of 
abelian-by-cyclic groups, Acta Math. Vol. 184, No 2 (2000), pp. 
145-202. 

\bibitem[FM3]{FMsbf}
B. Farb and L. Mosher, The Geometry of surface-by-free groups, 
GAFA 12, 2002. 

\bibitem[Ge1]{gersten}
Gersten, S. M. Bounded cocycles and combings of groups.  Internat. J. Algebra Comput.  2  (1992),  no. 3, 307--326.

\bibitem[Ge2]{gersten2}
Gersten, S. M. Cohomological lower bounds for isoperimetric functions on groups.  Topology  37  (1998),  no. 5, 1031--1072.

\bibitem[Gr]{Gromov:boundedcohomology}
Gromov, Michael Volume and bounded cohomology.  Inst. Hautes ƒtudes Sci. Publ. Math. No. 56 (1982), 5--99 (1983).

\bibitem[KL1]{KL}
Kleiner, Bruce; Leeb, Bernhard Rigidity of quasi-isometries for
symmetric spaces and Euclidean buildings. {\it Inst. Hautes ƒtudes
Sci. Publ. Math.} No. 86, (1997), 115--197 (1998).

\bibitem[KL2]{KleinerCentral}
Kleiner, Bruce; Leeb, Bernhard Groups quasi-isometric to symmetric spaces.  Comm. Anal. Geom.  9  (2001),  no. 2, 239--260.

\bibitem[M]{MW}
Mosher, L. Homology and dynamics in quasi-isometric rigidity of once-punctured mapping class groups, Lecture Notes from the LMS Durham Symposium: Geometry and Cohomology in Group Theory, University of Durham, UK, July 2003 (http://front.math.ucdavis.edu/math.GR/0308065)

\bibitem[MSW1]{MSW1}
L. Mosher, M. Sageev, and K. Whyte, Quasi-actions on trees I: Bounded
valence, Annals of Mathematics (2) 158, 2003.

\bibitem[MSW2]{MSW2}
L.Mosher, M.Sageev, and K.Whyte, Quasi-actions on trees II:
  Bass-Serre trees, to appear in Memoirs of the AMS.


\bibitem[Sch1]{Schwartz}
Schwartz, Richard Evan Quasi-isometric rigidity and Diophantine
approximation.  {\it Acta Math.}  177  (1996),  no. 1, 75--112.

\bibitem[Sch2]{Schwartz:nonuniform}
Schwartz, Richard Evan The quasi-isometry classification of rank
one lattices.  {\it Inst. Hautes ƒtudes Sci. Publ. Math.}  No. 82
(1995), 133--168 (1996).

\bibitem[Se]{BassSerre}
Serre, Jean-Pierre Trees. Translated from the French by John Stillwell. Springer-Verlag, Berlin-New York, 1980.

\bibitem[Sh]{shah}
Shah, Nimish A. Invariant measures and orbit closures on homogeneous spaces for actions of subgroups generated by unipotent elements.  Lie groups and ergodic theory (Mumbai, 1996),  229--271, Tata Inst. Fund. Res. Stud. Math., 14, Tata Inst. Fund. Res., Bombay, 1998. 

\bibitem[V]{Venkataramana}
Venkataramana, T. N. A remark on extended congruence subgroups.  Internat. Math. Res. Notices  1999,  no. 15, 835--838. 

\bibitem[Why1]{thesis}
Whyte, K Amenability, bi-Lipschitz equivalence, and the von Neumann conjecture.  Duke Math. J.  99  (1999),  no. 1, 93--112. 

\bibitem[Why2]{WhyBS}
Whyte,  K. The Large Scale Geometry of the Higher Baumslag-Solitar 
groups, GAFA 11, 2002.

\bibitem[Why3]{whyte:tame}
Whyte, K.  Topologizing spaces of quasi-isometries, preprint (2009).

\bibitem[Wi]{Witte}
Witte, Dave Topological equivalence of foliations of homogeneous spaces.  Trans. Amer. Math. Soc.  317  (1990),  no. 1, 143--166.

\end{thebibliography}
\end{document}